\providecommand{\algorithmname}{Algorithm}
\journal{Linear Algebra and its Applications}
\providecommand{\algorithmname}{Algorithm}
\newtheorem{lem}{Lemma}\newtheorem{theorem}{Theorem}\newtheorem{Definition}{Definition}\newtheorem{prob}{Problem}\newtheorem{prop}{Proposition}\newtheorem{coro}{Corollary}
\begin{document}
\begin{frontmatter}

\title{\textbf{Edge-Matching Graph Contractions and their Interlacing Properties}}

%\tnotetext[mytitlenote]{Fully documented templates are available in the elsarticle package on \href{http://www.ctan.org/tex-archive/macros/latex/contrib/elsarticle}{CTAN}.}

%% or include affiliations in footnotes:

\author[mymainaddress]{Noam Leiter}

\ead{noaml@campus.technion.ac.il}

\author[mymainaddress]{Daniel Zelazo}

\ead{dzelazo@technion.ac.il}

\address[mymainaddress]{Faculty of Aerospace Engineering, Technion, Haifa 32000 Israel }

\global\long\def\ones{\mathds{1}}%
\global\long\def\G{\mathcal{G}}%
\global\long\def\E{\mathcal{E}}%
\global\long\def\N{\mathcal{N}}%
\global\long\def\V{\mathcal{V}}%
\global\long\def\M{\mathcal{M}}%
\global\long\def\Y{\mathcal{Y}}%
\global\long\def\U{\mathcal{U}}%
\global\long\def\C{\mathcal{C}}%
\global\long\def\T{\mathcal{T}}%
\global\long\def\graph{\mathcal{G}=\left(\mathcal{V},\mathcal{E}\right)}%
\global\long\def\TM{T_{\left(\mathcal{T},\mathcal{C}\right)}}%
\global\long\def\HT{\mathcal{H}_{2}}%
\global\long\def\R{\mathbb{R}}%
\global\long\def\diag{\text{diag}}%
\global\long\def\trace{\text{Tr}}%
\global\long\def\Hinf{\mathcal{H}_{\infty}}%
\global\long\def\Tree{\mathbb{T}\left(\G\right)}%
\global\long\def\gcont{\G\sslash\pi}%

\begin{abstract}
For a given graph $\G$ of order $n$ with $m$ edges, and a real
symmetric matrix associated to the graph, $M\left(\G\right)\in\R^{n\times n}$,
the interlacing graph reduction problem is to find a graph $\G_{r}$
of order $r<n$ such that the eigenvalues of $M\left(\G_{r}\right)$
interlace the eigenvalues of $M\left(\G\right)$. Graph contractions
over partitions of the vertices are widely used as a combinatorial
graph reduction tool. In this study, we derive a graph reduction interlacing theorem based on subspace mappings and the minmax theory. We then define a class of edge-matching
graph contractions and show how two types of edge-matching contractions
provide Laplacian and normalized Laplacian interlacing. An $\mathcal{O}\left(mn\right)$
algorithm is provided for finding a normalized Laplacian interlacing
contraction and an $\mathcal{O}\left(n^{2}+nm\right)$ algorithm is
provided for finding a Laplacian interlacing contraction. 
\end{abstract}
\begin{keyword}
Spectral clustering\sep Laplacian interlacing\sep Graph contractions 
\end{keyword}
\end{frontmatter}

\section{Introduction}

The effect of combinatorial operations on graph spectra is an evolving
branch of graph theory, linking together combinatorial graph theory
with the spectral analysis of the algebraic structures of graphs.
In general, there is an interest to understand how certain graph reduction
operations relate to spectral and combinatorial properties. Of particular
interest are reductions that satisfy an \emph{interlacing} property
between algebraic graph representations. Interlacing properties of
algebraic structures of graphs have been shown to have combinatorial
interpretations. Haemers used the adjacency and Laplacian matrix interlacing
to provide combinatorical results on the chromatic number and spectral
bounds \cite{haemers1995interlacing}. The neighborhood reassignment
operation has been shown to provide an interlacing of the normalized
Laplacian \cite{wu2009interlacing}, and Chen et al. provide an interlacing
result on contracted normalized Laplacians \cite{chen2004interlacing}.

Partitioning the vertices of a graph is a combinatorial operation
extensively studied in graph theory in the context of graph clustering
\cite{schaeffer2007graph} and network communities \cite{newman2004finding},
and for spectral clustering methods \cite{ng2002spectral}. Partitioning
combined with node and edge contractions along those partitions lead
to reduced order graphs. In this direction, we define edge-matching
contractions as a class of graph contractions with a one-to-one correspondence
of a subset of edges in the full order graph to those in the contracted
graph. We then explore two types of edge-matching contractions, \emph{cycle
invariant contractions} and \emph{node-removal equivalent contractions}.
Cycle-invariant contractions preserve the cycle structure of the graph
in the contracted graph, and node-removal equivalent contractions
are cases where a contraction can be obtained also from a node-removal
operation. We show how contraction of these types lead to interlacing
of the normalized-Laplacian and Laplacian graph matrices. Two algorithms
of complexity $\mathcal{O}\left(mn\right)$ and $\mathcal{O}\left(n^{2}+nm\right)$
are then provided for finding, if they exist, a cycle-invariant contraction
and a node-removal equivalent contraction respectively for a given
graph with $n$ vertices and $m$ edges.

The remaining sections of this paper are as follows. In Section \ref{sec:Interlacing-Graphs},
the interlacing graph reduction problem is presented and an interlacing graph reduction theorem is derived. In Section \ref{sec:Graph-Contractions}
we formulate the graph contraction operation for simple undirected
graphs, and introduce the class of edge-matching graph contractions
and two sub-classes of cycle-invariant and node-removal equivalent
graph contractions. In Section \ref{sec:Interlacing-Graph-Contractions},
the interlacing graph reduction problem is solved for these two classes
for the Laplacian and normalized-Laplacian matrices, and Section \ref{sec:Case-Studies}
provides case studies of the interlacing methods.

\paragraph*{Preliminaries}

The integer set $\left\{ 1,\ldots,n\right\} $ is denoted as $\left[1,n\right]$.
An undirected graph $\mathcal{G}=\left(\mathcal{V},\E\right)$ consists
of a vertex set $\V\left(\G\right)$, and an edge set $\E\left(\G\right)=\{\epsilon_{1},\ldots,\epsilon_{|\E|}\}$
with $\epsilon_{k}\in\V{}^{2}$. The order of the graph is the number
of vertices $|\mathcal{V}\left(\G\right)|$. Two nodes $u,v\in\V\left(\G\right)$
are \textit{adjacent} if they are the endpoints of an edge, and we
denote this by $u\sim v$. The neighborhood $\N_{v}\left(\G\right)$
is the set of all nodes adjacent to $v$ in $\G$. The degree of a
node $v$, denoted $d_{v}\left(\G\right)$, is the number of nodes
adjacent to it, $d_{v}\left(\G\right)=\left|\N_{v}\left(\G\right)\right|$.
A \textit{path} in a graph is a sequence of distinct adjacent nodes.
A \textit{simple cycle} is a path with an additional edge such that
the first and last vertices are repeated. A graph $\G$ is \textit{connected}
if we can find a path between any pair of nodes. A \textit{simple
graph} does not include self-loops or duplicate edges. A \textit{multi-graph}
is a graph that may include duplicate edges. We denote $\G\backslash\V_{R}$
as the graph obtained from $\G$ by removing all nodes $v\in\V_{R}\subset\V$
from $\V\left(G\right)$ and removing all edges in $\E\left(\G\right)$
adjacent to $v$. We denote $\G\backslash\E_{R}$ as a graph obtained
from $\G$ by removing all edges $\epsilon\in\E_{R}$ from $\E\left(G\right)$.
A subgraph $\G_{S}=\left(\V_{S},\E_{S}\right)$ of a graph $\graph$,
denoted as $\G_{S}\subseteq\G$, is any graph such that $\V_{S}\subseteq\V$
and $\E_{S}\subseteq\E\cap\V_{S}^{2}$. An induced subgraph $\G\left[\V_{S}\right]$
is a subgraph $\G_{S}\subseteq\G$ such that $\E_{S}=\E_{\G}\cap\V_{S}^{2}$.
An induced subgraph $\G\left[\V_{S}\right]$ is a \textit{connected
component} of $\G$ if it is connected and no node in $\V_{S}$ is
adjacent to a node in $\V\left(\G\right)\backslash\V_{S}$. The set
$\Tree$ denotes the set of all spanning trees of a connected graph
$\G$. For $\T\in\Tree$, the \emph{co-tree} graph $\G\backslash\E\left(\T\right)$
is denoted as $\C\left(\T\right)$ \cite{Godsil2001}.

\section{Interlacing Graph Reductions\label{sec:Interlacing-Graphs}}

Graph matrices are algebraic representations of graphs, and the spectral
and algebraic properties of these matrices can provide insights about
combinatorial properties of the underlying graph, e.g., Fiedler's
seminal results on the Laplacian algebraic connectivity \cite{fiedler1975property}.
The interlacing property of matrices has been extensively studied
with classic algebraic results such as the Poincare separation theorem
\citep[p. 119]{bellman1997introduction}, and matrix combinatorial
results such as the relation of equitable partitions with tight interlacing
\cite{godsil1997compact}. Here we study what types of reduced graphs
have interlacing graph matrices.

The spectrum of a real symmetric matrix $A\in\mathbb{R}^{n\times n}$
is the set of eigenvalues $\left\{ \lambda_{k}\left(A\right)\right\} _{k=1}^{n}$
where $\lambda_{k}\left(A\right)$ is the $k$th eigenvalue of $A$
in ascending order. Let $A\in\mathbb{R}^{n\times n}$ and $B\in\mathbb{R}^{r\times r}$
be real symmetric matrices with $0<r<n$. Then the eigenvalues of
$B$ \emph{interlace} the eigenvalues of $A$, denoted $B\propto A$,
if $\lambda_{k}\left(A\right)\leq\lambda_{k}\left(B\right)\leq\lambda_{n-r+k}\left(A\right)$
for $k=1,2,\ldots,r$. %We denote this property as $B\propto A$. 
The interlacing is \textit{tight }if $\lambda_{k}\left(A\right)=\lambda_{k}\left(B\right)$
or $\lambda_{k}\left(B\right)=\lambda_{n-r+k}\left(A\right)$ for
$k=1,2,\ldots,r$. It is straight forward to show that interlacing
is a transitive property.

\begin{prop}\label{interlacing sequence} Let $A_{1}\in\mathbb{R}^{n_{1}\times n_{1}}$,
$A_{2}\in\mathbb{R}^{n_{2}\times n_{2}}$ and $A_{3}\in\mathbb{R}^{n_{3}\times n_{3}}$
be real symmetric matrices with $0<n_{3}<n_{2}<n_{1}$. If $A_{3}\propto A_{2}$
and $A_{2}\propto A_{1}$, then $A_{3}\propto A_{1}$.\end{prop}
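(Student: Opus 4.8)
The plan is to prove transitivity of interlacing directly from the defining inequalities, since the statement is purely about chains of scalar inequalities among ordered eigenvalues. Writing $n=n_1$, $s=n_2$, and $r=n_3$ so that $0<r<s<n$, the hypothesis $A_2\propto A_1$ gives
\begin{equation}
\lambda_k(A_1)\leq\lambda_k(A_2)\leq\lambda_{n-s+k}(A_1),\quad k=1,\ldots,s,
\end{equation}
and the hypothesis $A_3\propto A_2$ gives
\begin{equation}
\lambda_j(A_2)\leq\lambda_j(A_3)\leq\lambda_{s-r+j}(A_2),\quad j=1,\ldots,r.
\end{equation}
The goal is to produce, for each $j=1,\ldots,r$, the two bounds $\lambda_j(A_1)\leq\lambda_j(A_3)$ and $\lambda_j(A_3)\leq\lambda_{n-r+j}(A_1)$.

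\emph{Lower bound.} First I would chain the left inequalities. For the lower bound on $\lambda_j(A_3)$, apply the second display to get $\lambda_j(A_3)\geq\lambda_j(A_2)$, and then the first display (with $k=j$, which is legitimate since $j\leq r<s$) to get $\lambda_j(A_2)\geq\lambda_j(A_1)$. Composing these yields $\lambda_j(A_3)\geq\lambda_j(A_1)$, which is exactly the required left bound.

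\emph{Upper bound.} The upper bound is the place where I expect the only real bookkeeping, because the index shifts must compose correctly. From the second display, $\lambda_j(A_3)\leq\lambda_{s-r+j}(A_2)$. Now I want to feed the index $k=s-r+j$ into the upper half of the first display; this is permissible precisely because $1\le s-r+j\le s$ for $j=1,\ldots,r$, so the first display applies and gives $\lambda_{s-r+j}(A_2)\leq\lambda_{n-s+(s-r+j)}(A_1)=\lambda_{n-r+j}(A_1)$. The index arithmetic collapses cleanly to $n-r+j$, which is exactly the upper interlacing bound for $A_3\propto A_1$. The main point to verify carefully is this index-range check and the cancellation $n-s+(s-r+j)=n-r+j$; everything else is immediate.

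Combining the two bounds gives $\lambda_j(A_1)\leq\lambda_j(A_3)\leq\lambda_{n-r+j}(A_1)$ for all $j=1,\ldots,r$ with $0<r<n$, which is the definition of $A_3\propto A_1$. No spectral or variational machinery is needed beyond the definition; the content is entirely the verification that the shifted index $s-r+j$ stays within the valid range so that the two index shifts compose additively.
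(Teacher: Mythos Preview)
Your proof is correct and follows essentially the same approach as the paper: both chain the defining inequalities, taking $l=k$ for the lower bound and $l=s-r+j$ (the paper's $l=n_2-n_3+k$) for the upper bound, with the same index cancellation $n-s+(s-r+j)=n-r+j$. Your version is slightly more explicit in verifying that the shifted index $s-r+j$ lies in the admissible range $[1,s]$, which the paper leaves implicit.
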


\begin{proof} From $A_{3}\propto A_{2}$ and $A_{2}\propto A_{1}$
we have $\lambda_{k}\left(A_{2}\right)\leq\lambda_{k}\left(A_{3}\right)\leq\lambda_{n_{2}-n_{3}+k}\left(A_{2}\right)$
for $k=1,2,\ldots,n_{3}$ and $\lambda_{l}\left(A_{1}\right)\leq\lambda_{l}\left(A_{2}\right)\leq\lambda_{n_{1}-n_{2}+l}\left(A_{1}\right)$
for $l=1,2,\ldots,n_{2}$. From $l=k$ we get $\lambda_{k}\left(A_{1}\right)\leq\lambda_{k}\left(A_{2}\right)\leq\lambda_{k}\left(A_{3}\right)$,
and from $l=n_{2}-n_{3}+k$ we get $\lambda_{k}\left(A_{3}\right)\leq\lambda_{n_{2}-n_{3}+k}\left(A_{2}\right)\leq\lambda_{n_{1}-n_{3}+k}\left(A_{1}\right)$,
such that $\lambda_{k}\left(A_{1}\right)\leq\lambda_{k}\left(A_{3}\right)\leq\lambda_{n_{1}-n_{3}+k}\left(A_{1}\right)$
for $k=1,2,\ldots,n_{3}$ and we obtain that $A_{3}\propto A_{1}$.\end{proof}

The most commonly studied matrices in algebraic graph theory are the
\emph{adjacency matrix} $A\left(\G\right)\in\R^{\left|\V\right|\times\left|\V\right|}$,
the \emph{Laplacian matrix} $L\left(\G\right)\in\R^{\left|\V\right|\times\left|\V\right|}$
and the \emph{normalized Laplacian matrix} $\mathcal{L}\left(\G\right)\in\R^{\left|\V\right|\times\left|\V\right|}$,
all of which are real symmetric matrices. They are defined below,
where each row and column is indexed by a vertex in the graph $\mathcal{G}$
\cite{Godsil2001}, 
\[
[A(\mathcal{G})]_{uv}=\left\{ \begin{array}{cl}
1, & u\sim v\\
0, & \mbox{otherwise}
\end{array}\right.,
\]
\[
[L(\mathcal{G})]_{uv}=\left\{ \begin{array}{cl}
d_{u}\left(\G\right), & u=v\\
-1 & u\sim v\\
0, & \mbox{otherwise}
\end{array}\right.,
\]
and 
\[
[\mathcal{L}(\mathcal{G})]_{uv}=\left\{ \begin{array}{cl}
1, & u=v\\
-\left(\sqrt{d_{u}\left(\G\right)d_{v}\left(\G\right)}\right)^{-1} & u\sim v\\
0, & \mbox{otherwise}
\end{array}\right..
\]

We now extend the notion of spectral interlacing properties to graphs.

\begin{Definition}[interlacing graphs]\label{interlacing graphs}Consider
two graphs $\G_{n}$ and $\G_{r}$ of order $n$ and $r$ respectively,
with $n>r$, and let $M(\G)\in\R^{n\times n}$ be any real symmetric
matrix associated with the graph $\G$. We say that the two graphs
are \emph{$M$-interlacing }if $M\left(\G_{r}\right)\propto M\left(\G_{n}\right)$,
and denote the property by $\G_{r}\propto_{M}\G_{n}$.\end{Definition}

A problem arising naturally from the definition of interlacing graphs
is the interlacing graph reduction problem.

\begin{prob}[interlacing graph reduction]\label{interlacing graph reduction}
Consider a graph $\G_{n}$ of order $n$ and let $M(\G)\in\R^{n\times n}$
be any real symmetric matrix associated with the graph $\G$. Find
a graph $\G_{r}$ of a given order $r<n$ such that $\G_{r}\propto_{M}\G_{n}$.\end{prob}

Finding a solution to Problem \ref{interlacing graph reduction} may
be numerically intractable for a moderate number of nodes, as the
number $c_{r}$ of simple connected graphs of order $r$ increases
exponentially according to the recurrence $\sum_{k}\binom{r}{k}kc_{k}2^{\binom{r-k}{2}}=r2^{\binom{r}{2}}$
for $r\geq1$ \citep[p.87]{wilf1994generatingfunctionology}, e.g.,
for $r=1,\ldots,6$, $c_{r}=1,\,1,\,4,\,38,\,728,\,26704$.

%The Rayleigh quotient of a real symmetric matrix $M\in\R^{n\times n}$ and a vector $x\in\R^{n}$ is defined as $R\left(M,x\right)\triangleq\frac{x^{T}Mx}{x^{T}x}$. 
A powerful tool for proving interlacing results is the Courant-Fischer
theorem, e.g., that a symmetric matrix and a principle submatrix of
that matrix interlace \cite{haemers1995interlacing}, which leads
to an adjacency interlacing theorem for node-removal graph reductions:

\begin{theorem}[Adjacency interlacing node-removal]\label{node removal contraction interlacing-1}
Consider a graph $\G$ and a node subset $\V_{S}\subset\V\left(\G\right)$.
Then $\G\backslash\V_{S}\propto_{A}\G$.\end{theorem}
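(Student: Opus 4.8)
The plan is to recognize that deleting vertices realizes the adjacency matrix of the reduced graph as a \emph{principal submatrix} of $A\left(\G\right)$, and then to appeal to the classical principal-submatrix interlacing result that follows from the Courant--Fischer theorem. By Definition \ref{interlacing graphs}, the claim $\G\backslash\V_{S}\propto_{A}\G$ unfolds to the matrix statement $A\left(\G\backslash\V_{S}\right)\propto A\left(\G\right)$, so it suffices to prove this interlacing of adjacency matrices.

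First I would set $r=\left|\V\left(\G\right)\right|-\left|\V_{S}\right|$ and fix an ordering of $\V\left(\G\right)$ that lists the $r$ surviving vertices $\V\left(\G\right)\backslash\V_{S}$ first. The central observation is that removing the nodes in $\V_{S}$ together with their incident edges leaves the adjacency relation among the surviving vertices unchanged: for $u,v\in\V\left(\G\right)\backslash\V_{S}$ we have $u\sim v$ in $\G\backslash\V_{S}$ if and only if $u\sim v$ in $\G$, since the only edges removed are those with an endpoint in $\V_{S}$. Consequently, by the definition of the adjacency matrix, $A\left(\G\backslash\V_{S}\right)$ is exactly the $r\times r$ leading principal submatrix of $A\left(\G\right)$ obtained by striking the rows and columns indexed by $\V_{S}$. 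The only point requiring care in this step is verifying the edge restriction $\E\left(\G\backslash\V_{S}\right)=\E\left(\G\right)\cap\left(\V\left(\G\right)\backslash\V_{S}\right)^{2}$, so that no spurious or missing entries appear in the submatrix.

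Next I would invoke interlacing for principal submatrices. Writing $P\in\R^{n\times r}$ for the matrix whose columns are the standard basis vectors $e_{v}$, $v\in\V\left(\G\right)\backslash\V_{S}$, we have $A\left(\G\backslash\V_{S}\right)=P^{\top}A\left(\G\right)P$ with $P^{\top}P=I_{r}$. Applying the Courant--Fischer min-max characterization of $\lambda_{k}\left(A\left(\G\right)\right)$ and restricting the competing $k$-dimensional test subspaces to the range of $P$ gives the lower bound $\lambda_{k}\left(A\left(\G\right)\right)\leq\lambda_{k}\left(A\left(\G\backslash\V_{S}\right)\right)$; applying the dual max-min characterization (equivalently, the same argument to $-A\left(\G\right)$) gives the upper bound $\lambda_{k}\left(A\left(\G\backslash\V_{S}\right)\right)\leq\lambda_{n-r+k}\left(A\left(\G\right)\right)$. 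Together these are precisely the inequalities required by the definition of $\propto$, so $A\left(\G\backslash\V_{S}\right)\propto A\left(\G\right)$.

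I expect no genuine obstacle here, since the principal-submatrix inequalities are standard (this is the Poincar\'e separation theorem cited above and used in \cite{haemers1995interlacing}); once the submatrix identification is established one may simply cite that result. As an alternative that avoids the general min-max bookkeeping, I would remove a single vertex at a time, using only the codimension-one Cauchy interlacing $\lambda_{k}\leq\lambda_{k}\leq\lambda_{k+1}$ at each step, and then compose the $\left|\V_{S}\right|$ successive interlacings via the transitivity established in Proposition \ref{interlacing sequence} to obtain the full result.
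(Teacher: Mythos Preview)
Your proposal is correct and follows exactly the paper's approach: the paper's proof is the single line ``The matrix $A\left(\G\backslash\V_{S}\right)$ is a principal submatrix of $A\left(\G\right)$, therefore, $\G\backslash\V_{S}\propto_{A}\G$.'' You have simply unpacked this by verifying the principal-submatrix identification explicitly and spelling out the Courant--Fischer (Poincar\'e separation) argument that the paper takes for granted; your single-vertex-at-a-time alternative via Proposition~\ref{interlacing sequence} is a pleasant but unnecessary addition.
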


\begin{proof}The matrix $A\left(\G\backslash\V_{S}\right)$ is a
principle submatrix of $A\left(\G\right)$, therefore, $\G\backslash\V_{S}\propto_{A}\G$.\end{proof}

Utilizing the Courant-Fischer theorem and the following $\min\max$
inequalities (Proposition \ref{minmax properties}) an interlacing
graph reduction theorem is derived. We first introduce some notations
to simplify the statement. A $k$-dimensional subspace of $\R^{n}$
is denoted as $\mathcal{F}_{n}^{\left(k\right)}$. For an $r$-dimensional
subspace $\mathcal{F}_{n}^{(r)}$, we define the injective map $p_{\mathcal{F}_{n}^{\left(r\right)}}:\R^{r}\rightarrow\R^{n}$,
such that $x\in\R^{r}\mapsto y\in\mathcal{F}_{n}^{(r)}$.

\begin{theorem}[Courant-Fischer \cite{Horn1991}]\label{Courant-Fischer}Consider
a real symmetric matrix $M\in\mathbb{R}^{n\times n}$, then for $k\in\left[1,n\right]$
\begin{equation*}
\lambda_{k}\left(M\right)=\underset{\mathcal{F}_{n}^{\left(n-k+1\right)}}{\max}\;\underset{\substack{x\in\mathcal{F}_{n}^{\left(n-k+1\right)}\\
x\neq0
}
}{\min}R\left(M,x\right),
\end{equation*}
and
\begin{equation*}
\lambda_{k}\left(M\right)=\underset{\mathcal{F}_{n}^{\left(k\right)}}{\min}\;\underset{\substack{x\in\mathcal{F}_{n}^{\left(k\right)}\\
x\neq0
}
}{\max}R\left(M,x\right),
\end{equation*}
where $R\left(M,x\right)\triangleq\frac{x^{T}Mx}{x^{T}x}$ is the
Rayleigh quotient. \end{theorem}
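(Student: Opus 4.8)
The plan is to derive both characterizations directly from the spectral decomposition of $M$, treating the Rayleigh quotient as a weighted average of eigenvalues. By the spectral theorem, since $M$ is real symmetric there is an orthonormal basis $\{v_1,\ldots,v_n\}$ of $\R^n$ with $Mv_i=\lambda_i(M)v_i$, the eigenvalues taken in the ascending order fixed above. First I would expand an arbitrary nonzero $x$ in this basis as $x=\sum_{i=1}^n c_i v_i$ and observe that $R(M,x)=\frac{\sum_i \lambda_i(M)c_i^2}{\sum_i c_i^2}$ is a convex combination of the $\lambda_i(M)$ with weights $c_i^2/\|x\|^2$. Hence $R(M,x)$ always lies between $\lambda_1(M)$ and $\lambda_n(M)$, and restricting $x$ to the span of a contiguous block of eigenvectors pins $R(M,x)$ between the corresponding extreme eigenvalues in that block.

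I would then establish the min-max identity (the second formula) by a two-sided bound. For the easy direction, take the specific subspace $\mathcal{F}_n^{(k)}=\mathrm{span}\{v_1,\ldots,v_k\}$; every nonzero $x$ in it satisfies $R(M,x)\le\lambda_k(M)$ with equality at $x=v_k$, so its inner maximum equals $\lambda_k(M)$, and the outer minimum is therefore at most $\lambda_k(M)$. For the reverse inequality I would show that every $k$-dimensional subspace $S$ forces the inner maximum to be at least $\lambda_k(M)$: the subspace $T=\mathrm{span}\{v_k,v_{k+1},\ldots,v_n\}$ has dimension $n-k+1$, so $\dim S+\dim T=n+1>n$ guarantees a nonzero vector $x\in S\cap T$, and expanding $x$ in $\{v_k,\ldots,v_n\}$ gives $R(M,x)\ge\lambda_k(M)$. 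Combining the two bounds yields the min-max formula.

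The max-min identity (the first formula) follows by the symmetric argument: the subspace $\mathrm{span}\{v_k,\ldots,v_n\}$ of dimension $n-k+1$ realizes an inner minimum equal to $\lambda_k(M)$, so the outer maximum is at least $\lambda_k(M)$; and for any $(n-k+1)$-dimensional subspace the same dimension count against $\mathrm{span}\{v_1,\ldots,v_k\}$ again produces a nonzero common vector whose Rayleigh quotient is at most $\lambda_k(M)$, bounding the inner minimum from above by $\lambda_k(M)$. Alternatively, I would deduce it from the min-max formula applied to $-M$ together with the identity $\lambda_k(-M)=-\lambda_{n-k+1}(M)$, which converts one characterization into the other.

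I expect the only subtle step to be the dimension-counting intersection argument: the whole result hinges on the fact that a $k$-dimensional and an $(n-k+1)$-dimensional subspace of $\R^n$ must meet in a nonzero vector, and on keeping the ascending-order indexing consistent so that the ``bottom block'' $\{v_1,\ldots,v_k\}$ and ``top block'' $\{v_k,\ldots,v_n\}$ of eigenvectors are assigned to the correct optimizing subspaces. Everything else reduces to the elementary bound on a weighted average established in the first step.
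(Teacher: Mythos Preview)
Your proof is correct and is the standard textbook argument. Note, however, that the paper does not supply its own proof of this statement: Theorem~\ref{Courant-Fischer} is quoted as a classical result with a citation to Horn and Johnson, and is used as a black box in the proofs of Proposition~\ref{minmax properties} and Theorem~\ref{interlacing graph reduction theorem}. So there is no paper proof to compare against; your spectral-decomposition-plus-dimension-count argument is precisely the proof one finds in the cited reference.
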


\begin{prop}\label{minmax properties}Consider a subspace $\mathcal{F}_{n}^{(r)}$
for $r<n$, and let $f\left(x\right):\R^{n}\rightarrow\R$ be a real-valued
function that attains a minimum and a maximum on $\R^{n}\backslash\left\{ 0\right\} $.
Then the following holds for $k\in\left[1,r\right]$:
\begin{itemize}
\item[i)] $\underset{\mathcal{F}_{n}^{\left(n-k+1\right)}}{\max}\;\underset{\substack{x\in\mathcal{F}_{n}^{\left(n-k+1\right)}\\
x\neq0
}
}{\min}f\left(x\right)\leq\max\limits _{\mathcal{F}_{r}^{\left(r-k+1\right)}}\;\min\limits _{\substack{\tilde{x}\in\mathcal{F}_{r}^{\left(r-k+1\right)}\\
\tilde{x}\neq0
}
}f\left(p_{\mathcal{F}_{n}^{\left(r\right)}}\left(\tilde{x}\right)\right)$,
\item[ii)] $\min\limits _{\mathcal{F}_{n}^{\left(n-r+k\right)}}\;\max\limits _{\substack{x\in\mathcal{F}_{n}^{\left(n-r+k\right)}\\
x\neq0
}
}f\left(x\right)\geq\min\limits _{\mathcal{F}_{r}^{\left(k\right)}}\;\max\limits _{\substack{\tilde{x}\in\mathcal{F}_{r}^{\left(k\right)}\\
\tilde{x}\neq0
}
}f\left(p_{\mathcal{F}_{n}^{\left(r\right)}}\left(\tilde{x}\right)\right)$.
\end{itemize}
\end{prop}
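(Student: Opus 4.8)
The plan is to establish both inequalities by a single dimension-counting argument, exploiting that any subspace of $\R^{n}$ of large enough dimension is forced to meet the fixed $r$-dimensional image subspace $\mathcal{F}_{n}^{(r)}$ in a subspace whose dimension is exactly the one indexing the right-hand side. I would first isolate three elementary ingredients. The first is the dimension bound: for any subspace $V\subseteq\R^{n}$ of dimension $d$, one has $\dim\bigl(V\cap\mathcal{F}_{n}^{(r)}\bigr)\geq d+r-n$. The second is monotonicity of the inner extrema under inclusion: if $W'\subseteq V$, then $\min_{x\in V\setminus\{0\}}f(x)\leq\min_{x\in W'\setminus\{0\}}f(x)$ and, dually, $\max_{x\in V\setminus\{0\}}f(x)\geq\max_{x\in W'\setminus\{0\}}f(x)$. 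The third is that, since $p_{\mathcal{F}_{n}^{(r)}}$ is an injective linear map with image $\mathcal{F}_{n}^{(r)}$, it restricts to a dimension-preserving bijection between the subspaces of $\R^{r}$ and the subspaces of $\mathcal{F}_{n}^{(r)}$; hence every $j$-dimensional $W'\subseteq\mathcal{F}_{n}^{(r)}$ equals $p_{\mathcal{F}_{n}^{(r)}}(\tilde{W})$ for a unique $j$-dimensional $\tilde{W}\subseteq\R^{r}$, and the values taken by $f$ on $W'$ coincide with those of $f\circ p_{\mathcal{F}_{n}^{(r)}}$ on $\tilde{W}$.

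For part (i), I would fix an arbitrary $(n-k+1)$-dimensional subspace $V$ contributing to the outer maximum on the left. The dimension bound gives $\dim\bigl(V\cap\mathcal{F}_{n}^{(r)}\bigr)\geq r-k+1$, so I can select an $(r-k+1)$-dimensional subspace $W'\subseteq V\cap\mathcal{F}_{n}^{(r)}$. The inclusion $W'\subseteq V$ together with monotonicity of the inner minimum yields $\min_{x\in V\setminus\{0\}}f(x)\leq\min_{x\in W'\setminus\{0\}}f(x)$, and pulling $W'$ back through $p_{\mathcal{F}_{n}^{(r)}}$ rewrites $\min_{x\in W'\setminus\{0\}}f(x)$ as $\min_{\tilde{x}\in\tilde{W}\setminus\{0\}}f(p_{\mathcal{F}_{n}^{(r)}}(\tilde{x}))$ for some $(r-k+1)$-dimensional $\tilde{W}\subseteq\R^{r}$. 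This last quantity is at most the maximum over all $(r-k+1)$-dimensional subspaces of $\R^{r}$, which is precisely the right-hand side of (i). Since $V$ was arbitrary, taking the maximum over $V$ preserves the bound and establishes (i).

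Part (ii) is the formal dual of this argument. For an arbitrary $(n-r+k)$-dimensional subspace $V$, the dimension bound now gives $\dim\bigl(V\cap\mathcal{F}_{n}^{(r)}\bigr)\geq k$, and I would choose a $k$-dimensional subspace $W'\subseteq V\cap\mathcal{F}_{n}^{(r)}$. This time I use the reverse monotonicity $\max_{x\in V\setminus\{0\}}f(x)\geq\max_{x\in W'\setminus\{0\}}f(x)$, transfer through $p_{\mathcal{F}_{n}^{(r)}}$ to obtain $\max_{\tilde{x}\in\tilde{W}\setminus\{0\}}f(p_{\mathcal{F}_{n}^{(r)}}(\tilde{x}))$, and bound this from below by the minimum over all $k$-dimensional subspaces of $\R^{r}$; taking the minimum over $V$ then yields (ii). I expect no genuine analytic obstacle, since $f$ enters only through the two monotonicity properties and the hypothesis that it attains its extrema on $\R^{n}\setminus\{0\}$. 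The one point demanding care is the dimensional bookkeeping, namely verifying that $(n-k+1)+r-n=r-k+1$ and $(n-r+k)+r-n=k$, so that the guaranteed intersection dimension matches exactly the subspace dimension indexing the right-hand side; this alignment is what lets the two halves combine with the Courant-Fischer characterizations of Theorem \ref{Courant-Fischer} to produce interlacing.
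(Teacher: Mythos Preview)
Your proposal is correct and follows essentially the same approach as the paper: both proofs rest on the dimension bound $\dim(V\cap\mathcal{F}_n^{(r)})\geq\dim V+r-n$, the monotonicity of the inner $\min$/$\max$ under passing to a subspace, and the transfer of subspaces of $\mathcal{F}_n^{(r)}$ to subspaces of $\R^r$ via the injective map $p_{\mathcal{F}_n^{(r)}}$. Your version is organized slightly more directly---fixing $V$ and chaining inequalities to the right-hand side before taking the outer extremum---whereas the paper first isolates an intermediate equality (their \eqref{minmax equality 1}) and then combines it with an inequality, but the substance is identical.
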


\begin{proof}
We first prove ($i$). Let $s\equiv n-k+1$. For all
$\mathcal{F}_{n}^{\left(s\right)}\subseteq\R^{n}$, 
\begin{align*}
\underset{\substack{x\in\mathcal{F}_{n}^{\left(s\right)}\\
x\neq0}}{\min}f\left(x\right) & = \min\left\{ \min\limits _{\substack{x\in\mathcal{F}_{n}^{\left(s\right)}\cap\mathcal{F}_{n}^{\left(r\right)}\\
x\neq0
}
}f\left(x\right), \; \min\limits _{\substack{x\in\mathcal{F}_{n}^{\left(s\right)}\backslash\left\{ \mathcal{F}_{n}^{\left(s\right)}\cap\mathcal{F}_{n}^{\left(r\right)}\right\} \\
x\neq0
}
}f\left(x\right)\right\} \nonumber \\
 & \leq\min\limits _{\substack{x\in\mathcal{F}_{n}^{\left(s\right)}\cap\mathcal{F}_{n}^{\left(r\right)}\\
x\neq0
}
}f\left(x\right),
\end{align*}
and we obtain that
\begin{equation}
\underset{\mathcal{F}_{n}^{\left(s\right)}}{\max}\;\underset{\substack{x\in\mathcal{F}_{n}^{\left(s\right)}\\
x\neq0
}
}{\min}f\left(x\right)\leq\max\limits _{\mathcal{F}_{n}^{\left(s\right)}}\;\min\limits _{\substack{x\in\mathcal{F}_{n}^{\left(s\right)}\cap\mathcal{F}_{n}^{\left(r\right)}\\
x\neq0
}
}f\left(x\right).\label{minmax inequality 1}
\end{equation}
Since $k\leq r$, then $s=n-k+1>n-r$ and 
\begin{equation*}
\dim\left(\mathcal{F}_{n}^{\left(s\right)}\cap\mathcal{F}_{n}^{\left(r\right)}\right)\geq s-\left(n-r\right).
\end{equation*}
Therefore, 
\begin{align*}
\max\limits _{\mathcal{F}_{n}^{\left(s\right)}}\;\min\limits _{\substack{x\in\mathcal{F}_{n}^{\left(s\right)}\cap\mathcal{F}_{n}^{\left(r\right)}\\
x\neq0
}
}f\left(x\right)=
\max\limits _{\mathcal{F}_{n}^{\left(s-\left(n-r\right)\right)}\subseteq\mathcal{F}_{n}^{\left(r\right)}}\;\min\limits _{\substack{
x\in\mathcal{F}_{n}^{\left(s-\left(n-r\right)\right)}\\
x\neq0}}f\left(x\right).
\end{align*}

For each $\mathcal{F}_{n}^{\left(s-\left(n-r\right)\right)}\subseteq\mathcal{F}_{n}^{\left(r\right)}$
we can find $\mathcal{\tilde{F}}_{r}^{\left(s-\left(n-r\right)\right)}\subseteq\R^{r}$
that is mapped to it by $p_{\mathcal{F}_{n}^{\left(r\right)}}\left(\tilde{x}\right)$,
\begin{equation*}
\mathcal{\tilde{F}}_{r}^{\left(s-\left(n-r\right)\right)}=\left\{ \tilde{x}\in\R^{r}|p_{\mathcal{F}_{n}^{\left(r\right)}}\left(\tilde{x}\right)\in\mathcal{F}_{n}^{\left(s-\left(n-r\right)\right)}\right\} ,
\end{equation*}
such that 
\begin{align*}
\min\limits _{\substack{x\in\mathcal{F}_{n}^{\left(s-\left(n-r\right)\right)}\\
x\neq0
}}f\left(x\right)=\min\limits _{\substack{
\tilde{x}\in\tilde{\mathcal{F}}_{r}^{\left(s-\left(n-r\right)\right)}\\
\tilde{x}\neq0}}f\left(p_{\mathcal{F}_{n}^{\left(r\right)}}\left(\tilde{x}\right)\right).
\end{align*}

Maximizing over all $\mathcal{F}_{n}^{\left(s-\left(n-r\right)\right)}\subseteq\mathcal{F}_{n}^{\left(r\right)}$
we obtain 
\begin{align*}
\max\limits _{\mathcal{F}_{n}^{\left(s-\left(n-r\right)\right)}\subseteq\mathcal{F}_{n}^{\left(r\right)}}\;\min\limits _{\substack{
x\in\mathcal{F}_{n}^{\left(s-\left(n-r\right)\right)}\\
x\neq0
}}f\left(x\right) & =\max\limits _{\mathcal{F}_{r}^{\left(s-\left(n-r\right)\right)}}\;\min\limits _{\substack{
\tilde{x}\in\tilde{\mathcal{F}}_{r}^{\left(s-\left(n-r\right)\right)}\\
\tilde{x}\neq0}}f\left(p_{\mathcal{F}_{n}^{\left(r\right)}}\left(\tilde{x}\right)\right),
\end{align*}
and
\begin{equation}
\max\limits _{\mathcal{F}_{n}^{\left(n-k+1\right)}}\;\min\limits _{\substack{x\in\mathcal{F}_{n}^{\left(n-k+1\right)}\cap\mathcal{F}_{n}^{\left(r\right)}\\
x\neq0
}
}f\left(x\right)=\max\limits _{\mathcal{F}_{r}^{\left(r-k+1\right)}}\;\min\limits _{\substack{
\tilde{x}\in\tilde{\mathcal{F}}_{r}^{\left(r-k+1\right)}\\
\tilde{x}\neq0}}f\left(p_{\mathcal{F}_{n}^{\left(r\right)}}\left(\tilde{x}\right)\right).
\label{minmax equality 1}
\end{equation}
Equation \eqref{minmax equality 1} together with \eqref{minmax inequality 1} completes the proof of ($i$).

The proof of (ii) is as follows. Let $s\equiv n-r+k$ . For all $\mathcal{F}_{n}^{\left(s\right)}\subseteq\R^{n}$,
\begin{align*}
\max\limits _{\substack{
x\in\mathcal{F}_{n}^{\left(s\right)}\\
x\neq0}}f\left(x\right) & = \max\left\{ \max\limits _{\substack{x\in\mathcal{F}_{n}^{\left(s\right)}\cap\mathcal{F}_{n}^{\left(r\right)}\\
x\neq0
}
}f\left(x\right), \; \max\limits _{\substack{x\in\mathcal{F}_{n}^{\left(s\right)}\backslash\left\{ \mathcal{F}_{n}^{\left(s\right)}\cap\mathcal{F}_{n}^{\left(r\right)}\right\} \\
x\neq0
}
}f\left(x\right)\right\} \nonumber \\
 & \geq\max\limits _{\substack{x\in\mathcal{F}_{n}^{\left(s\right)}\cap\mathcal{F}_{n}^{\left(r\right)}\\
x\neq0
}
}f\left(x\right),
\end{align*}
and 
\begin{equation}
\min\limits _{\mathcal{F}_{n}^{\left(s\right)}}\;\max\limits _{\substack{x\in\mathcal{F}_{n}^{\left(s\right)}\\
x\neq0
}
}f\left(x\right)\geq\min\limits _{\mathcal{F}_{n}^{\left(s\right)}}\;\max\limits _{\substack{x\in\mathcal{F}_{n}^{\left(s\right)}\cap\mathcal{F}_{n}^{\left(r\right)}\\
x\neq0
}
}f\left(x\right). \label{minmax inequality 2}
\end{equation}

Since $k\geq1$ then $s=n-r+k>n-r$ and 
\begin{equation*}
\dim\left(\mathcal{F}_{n}^{\left(s\right)}\cap\mathcal{F}_{n}^{\left(r\right)}\right)\geq s-\left(n-r\right),
\end{equation*}
 and we can then replace $\max\min$ with $\min\max$ in the above
proof of (i) and obtain 
\begin{equation}
\min\limits _{\mathcal{F}_{n}^{\left(n-r+k\right)}}\;\max\limits _{\substack{x\in\mathcal{F}_{n}^{\left(n-r+k\right)}\cap\mathcal{F}_{n}^{\left(r\right)}\\
x\neq0
}
}f\left(x\right)=\min\limits _{\mathcal{F}_{r}^{\left(k\right)}}\;\max\limits _{\substack{\tilde{x}\in\mathcal{F}_{r}^{\left(k\right)}\\
\tilde{x}\neq0
}
}f\left(p_{\mathcal{F}_{n}^{\left(r\right)}}\left(\tilde{x}\right)\right).
\label{minmax equality 2} 
\end{equation}
Equation \eqref{minmax equality 2} together with \eqref{minmax inequality 2} completes the proof of ($ii$).
\end{proof}

\begin{theorem}[Interlacing graph reduction theorem]\label{interlacing graph reduction theorem}
Consider two graphs $\G_{n}$ and $\G_{r}$ of order $n$ and $r$
respectively, with $n>r$, and let $M(\G)\in\R^{n\times n}$ be any
real symmetric matrix associated with the graph $\G$. If there exists
$r$-dimensional subspaces $\mathcal{A},\mathcal{B}\subseteq\R^{n}$
such that $\forall x\in\R^{r}\backslash\left\{ 0\right\} $,
\begin{equation*}
R\left(M\left(\G_{n}\right),p_{\mathcal{A}}\left(x\right)\right)\leq R\left(M\left(\G_{r}\right),x\right),
\end{equation*}
and
\begin{equation*}
R\left(M\left(\G_{n}\right),p_{\mathcal{B}}\left(x\right)\right)\geq R\left(M\left(\G_{r}\right),x\right),
\end{equation*}
then $\G_{r}\propto_{M}\G_{n}$.\end{theorem}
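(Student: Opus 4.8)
The plan is to prove the two halves of the interlacing inequality, namely $\lambda_k(A)\le\lambda_k(B)$ and $\lambda_k(B)\le\lambda_{n-r+k}(A)$ for every $k\in[1,r]$, where I abbreviate $A=M(\G_n)\in\R^{n\times n}$ and $B=M(\G_r)\in\R^{r\times r}$. Each half is obtained by chaining three facts in sequence: a Courant--Fischer characterization of the relevant eigenvalue (Theorem \ref{Courant-Fischer}), the matching $\min\max$ inequality of Proposition \ref{minmax properties}, and one of the two pointwise Rayleigh-quotient hypotheses. In both chains I take $f(x)=R(A,x)$; since this is the Rayleigh quotient of the fixed symmetric matrix $A$ it is continuous and $0$-homogeneous, hence attains its extrema on the compact unit sphere and therefore on $\R^n\backslash\{0\}$, which is exactly the regularity Proposition \ref{minmax properties} demands of $f$.

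For the lower half I would begin from the $\max\min$ form $\lambda_k(A)=\max_{\mathcal{F}_n^{(n-k+1)}}\min_{x\in\mathcal{F}_n^{(n-k+1)},\,x\neq0}R(A,x)$ and apply Proposition \ref{minmax properties}(i) with the $r$-dimensional subspace $\mathcal{F}_n^{(r)}=\mathcal{A}$. This bounds $\lambda_k(A)$ above by $\max_{\mathcal{F}_r^{(r-k+1)}}\min_{\tilde x\neq0}R(A,p_{\mathcal{A}}(\tilde x))$. The first hypothesis $R(A,p_{\mathcal{A}}(\tilde x))\le R(B,\tilde x)$ holds pointwise in $\tilde x$, and pointwise domination is preserved by the inner $\min$ and then by the outer $\max$, so the last quantity is at most $\max_{\mathcal{F}_r^{(r-k+1)}}\min_{\tilde x\neq0}R(B,\tilde x)$, which is precisely the $\max\min$ Courant--Fischer expression for $\lambda_k(B)$. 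The chain delivers $\lambda_k(A)\le\lambda_k(B)$.

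For the upper half I would run the dual argument, starting from the $\min\max$ form $\lambda_{n-r+k}(A)=\min_{\mathcal{F}_n^{(n-r+k)}}\max_{x\neq0}R(A,x)$, applying Proposition \ref{minmax properties}(ii) with $\mathcal{F}_n^{(r)}=\mathcal{B}$ to bound it below by $\min_{\mathcal{F}_r^{(k)}}\max_{\tilde x\neq0}R(A,p_{\mathcal{B}}(\tilde x))$, and then invoking the second hypothesis $R(A,p_{\mathcal{B}}(\tilde x))\ge R(B,\tilde x)$, whose direction is again preserved through $\max$ and $\min$, to reach $\min_{\mathcal{F}_r^{(k)}}\max_{\tilde x\neq0}R(B,\tilde x)=\lambda_k(B)$. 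This gives $\lambda_{n-r+k}(A)\ge\lambda_k(B)$. Combining the two halves yields $\lambda_k(A)\le\lambda_k(B)\le\lambda_{n-r+k}(A)$ for all $k\in[1,r]$, which is exactly the definition of $B\propto A$, i.e.\ $\G_r\propto_M\G_n$.

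The step requiring the most care is purely the bookkeeping of subspace dimensions, ensuring that the output of Proposition \ref{minmax properties} lands on the correct Courant--Fischer expression for $B$: the lower half must invoke the eigenvalue index $n-k+1$ on the $\R^n$ side so that part (i) returns dimension $r-k+1$, the dimension that represents $\lambda_k(B)$ through $\max\min$, while the upper half must target $\lambda_{n-r+k}(A)$ so that part (ii) returns dimension $k$, the dimension representing $\lambda_k(B)$ through $\min\max$. Beyond getting this alignment right, the only analytic content is the monotonicity of $\min$ and $\max$ under pointwise inequalities, which is elementary, so I do not anticipate a genuine obstacle.
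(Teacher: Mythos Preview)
Your proposal is correct and follows essentially the same argument as the paper: both halves chain Courant--Fischer (Theorem~\ref{Courant-Fischer}), the corresponding part of Proposition~\ref{minmax properties} with $\mathcal{F}_n^{(r)}$ taken to be $\mathcal{A}$ or $\mathcal{B}$, and the appropriate pointwise Rayleigh-quotient hypothesis, with the dimension bookkeeping exactly as you describe. Your added remarks on why $f(x)=R(A,x)$ attains its extrema and on the monotonicity of $\min$/$\max$ under pointwise domination are justifications the paper leaves implicit.
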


\begin{proof}In order for $\G_{n}$ and $\G_{r}$ to be $M$-interlacing
(Definition \ref{minmax properties}) we must prove that $\lambda_{k}\left(M\left(\G_{n}\right)\right)\leq\lambda_{k}\left(M\left(\G_{r}\right)\right)\leq\lambda_{n-r+k}\left(M\left(\G_{n}\right)\right)$
for $k\in\left[1,r\right]$. From the Courant--Fischer theorem (Theorem
\ref{Courant-Fischer}) we have 
\begin{equation*}
\lambda_{k}\left(M\left(\G_{n}\right)\right)=\underset{\mathcal{F}_{n}^{\left(n-k+1\right)}}{\max}\;\underset{\substack{x\in\mathcal{F}_{n}^{\left(n-k+1\right)}\\
x\neq0
}
}{\min}R\left(M\left(\G_{n}\right),x\right),
\end{equation*}
and from the min-max properties (Proposition \ref{minmax properties})
with $\mathcal{F}_{n}^{\left(r\right)}\equiv\mathcal{A}$ we have
for $k\in\left[1,r\right]$,
\begin{equation*}
\lambda_{k}\left(M\left(\G_{n}\right)\right)\leq\underset{\mathcal{F}_{r}^{\left(r-k+1\right)}}{\max}\;\underset{\substack{x\in\mathcal{F}_{r}^{\left(r-k+1\right)}\\
x\neq0
}
}{\min}R\left(M\left(\G_{n}\right),p_{\mathcal{A}}\left(x\right)\right).
\end{equation*}

Since $R\left(M\left(\G_{n}\right),p_{\mathcal{A}}\left(x\right)\right)\leq R\left(M\left(\G_{r}\right),x\right)$,
therefore, 
\begin{align*}
\lambda_{k}\left(M\left(\G_{n}\right)\right) & \leq\underset{\mathcal{F}_{r}^{\left(r-k+1\right)}}{\max}\;\underset{\substack{x\in\mathcal{F}_{r}^{\left(r-k+1\right)}\\
x\neq0
}
}{\min}R\left(M\left(\G_{r}\right),x\right)\nonumber \\
 & =\lambda_{k}\left(M\left(\G_{r}\right)\right),
\end{align*}
and $\lambda_{k}\left(M\left(\G_{n}\right)\right)\leq\lambda_{k}\left(M\left(\G_{r}\right)\right)$
for $k\in\left[1,r\right]$. In order to complete the interlacing
proof it is left to show that $\lambda_{k}\left(M\left(\G_{r}\right)\right)\leq\lambda_{n-r+k}\left(M\left(\G_{n}\right)\right)$
for $k\in\left[1,r\right]$. From the Courant--Fischer theorem (Theorem
\ref{Courant-Fischer}) we get 
\begin{equation*}
\lambda_{n-r+k}\left(M\left(\G_{n}\right)\right)=\underset{\mathcal{F}_{n}^{\left(n-r+k\right)}}{\min}\;\underset{\substack{x\in\mathcal{F}_{n}^{\left(n-r+k\right)}\\
x\neq0
}
}{\max}R\left(M\left(\G_{n}\right),x\right),
\end{equation*}
and from the min-max properties (Proposition \ref{minmax properties})
with $\mathcal{F}_{n}^{\left(r\right)}\equiv\mathcal{B}$ we have
\begin{equation*}
\lambda_{n-r+k}\left(M\left(\G_{n}\right)\right)\geq\underset{\mathcal{F}_{r}^{\left(k\right)}}{\min}\;\underset{\substack{x\in\mathcal{F}_{r}^{\left(k\right)}\\
x\neq0
}
}{\max}R\left(M\left(\G_{n}\right),p_{\mathcal{B}}\left(x\right)\right).
\end{equation*}

Since $R\left(M\left(\G_{n}\right),p_{\mathcal{B}}\left(x\right)\right)\geq R\left(M\left(\G_{r}\right),x\right)$,
therefore, 
\begin{align*}
\lambda_{n-r+k}\left(M\left(\G_{n}\right)\right) & \geq\underset{\mathcal{F}_{r}^{\left(k\right)}}{\min}\;\underset{\substack{x\in\mathcal{F}_{r}^{\left(k\right)}\\
x\neq0
}
}{\max}R\left(M\left(\G_{r}\right),x\right)\nonumber \\
 & =\lambda_{k}\left(M\left(\G_{r}\right)\right),
\end{align*}
and $\lambda_{k}\left(M\left(\G_{n}\right)\right)\leq\lambda_{n-r+k}\left(M\left(\G_{r}\right)\right)$
for $k\in\left[1,r\right]$, completing the proof.\end{proof}

In the next section we describe graph contractions as a constructive
method for performing graph reductions, and introduce a class of contractions
that, based on Theorem \ref{interlacing graph reduction theorem},
will lead to efficient algorithms for finding interlacing graph reductions.

\section{Graph Contractions\label{sec:Graph-Contractions}}

Graph contractions are a graph reduction method based on partitions
of the vertex set. They are a useful algorithmic tool applied to a
variety of graph-theoretical problems, e.g., for obtaining the connected
components \cite{chong1995finding} or finding all spanning trees
of a graph \cite{minty1965simple,winter1986algorithm}. We now define
several graph operations required for vertex partitions and graph
contractions and derive results that will allow us to relate graph
contractions and graph interlacing.

For an integer $r$ satisfying $1\leq r\leq n$, an \emph{$r$-partition}
of a vertex set $\V$ of order $n$, denoted $\pi_{r}\left(\V\right)$,
is a set of $r$ cells $\left\{ C_{i}\right\} _{i=1}^{r}$ such that
$C_{i}\cap C_{j}=\emptyset$ and $\cup_{i=1}^{r}C_{i}=\V$. We denote
the $i$th cell of a partition $\pi$ as $C_{i}\left(\pi\right)$,
and the \emph{cell neighborhood }$\N_{C_{i}}\left(\G\right)$ is defined
as $\N_{C_{i}}\triangleq\left\{ \cup_{v\in C_{i}}\N_{v}\left(\G\right)\right\} \backslash C_{i}$.
For $r=n$, $C_{i}\left(\pi_{n}\right)=i$ is the \textit{identity
partition}, which contains $n$ singletons (a cell with a single vertex).
An \textit{atom partition} $\pi_{n-1}\left(\V\right)$ contains $n-2$
singletons and a single 2-vertex cell. The set of all $r$-{partitions}
of $\V$ is denoted by $\Pi_{r}\left(\V\right)$, and the set of all
partitions of $\V$ is $\Pi\left(\V\right)\triangleq\cup_{r=1}^{n}\Pi_{r}\left(\V\right)$.
For a graph $\graph$, we may denote $\pi_{r}\left(\V\right)$ and
$\Pi_{r}\left(\V\right)$ as $\pi_{r}\left(\G\right)$ and $\Pi_{r}\left(\G\right)$.
For a graph with $n_{cc}$ connected components, we define the \emph{connected
components partition} $\pi_{cc}\left(\G\right)$ as the partition
$\pi_{cc}\left(\G\right)=\left\{ C_{i}\right\} _{i=1}^{n_{cc}}$,
such that $\G\left[C_{i}\right]$ is the $i$th connected component
of $\G$. Hereafter $\graph$ is a simple connected graph of order
$n$.

\begin{Definition}[partition function]\label{Partition Function}
For a graph $\G$ and $r$-partition $\pi\in\Pi_{r}\left(\G\right)$,
the \emph{partition function} is a map $f_{\pi}:\V\left(\G\right)\rightarrow\left[1,r\right]$
from each node in $\V$ to its cell index, i.e., $f_{\pi}\left(v\right)\triangleq\left\{ i\in\left[1,r\right]|C_{i}\left(\pi\right)\cap v\neq\emptyset\right\} $.
More generally, for a subset $\V_{S}\subseteq\V\left(\G\right)$ we
have $f_{\pi}\left(\V_{S}\right)\triangleq\left\{ i\in\left[1,r\right]|C_{i}\left(\pi\right)\cap\mathcal{V}_{S}\neq\emptyset\right\} $.
\end{Definition}

The \textit{quotient} of a graph $\G$ over a partition $\pi\in\Pi_{r}(\G)$,
denoted by $\G/\pi$, is the multi-graph of order $r$ with an edge
$\left\{ u,v\right\} $ for each edge between nodes in $C_{u}\left(\pi\right)$
and $C_{v}\left(\pi\right)$, i.e., $\G/\pi=\left(\left[1,r\right],\left\{ \tilde{\epsilon}_{j}\right\} _{j=1}^{\left|\E\right|}\right)$
with 
$
\tilde{\epsilon}_{j}=\left\{ f_{\pi}\left(h_{\E}\left(\epsilon_{j}\right)\right),f_{\pi}\left(t_{\E}\left(\epsilon_{j}\right)\right)\right\} ,
$
where $\epsilon_{j}\in\E\left(\G\right)$ and $h_{\E}\left(\epsilon\right),t_{\E}\left(\epsilon\right):\E\left(\G\right)\rightarrow\V\left(\G\right)$
assign a head and a tail to the end-nodes of each edge (thus, $\epsilon=(h_{\E}(\epsilon),t_{\E}(\epsilon))$).
The \emph{graph contraction} of $\G$ over $\pi$ is the simple graph
denoted as $\gcont$ which is obtained from the quotient $\G/\pi$
by removing all self-loops and redundant duplicate edges. Equivalently,
$\gcont=\left(\left[1,r\right],\E_{r}\right)$ with $\E_{r}=\left\{ \tilde{\epsilon}\in\left[1,r\right]^{2}|\tilde{\epsilon}\in\E\left(\G/\pi\right),h_{\E}\left(\tilde{\epsilon}\right)\neq t_{\E}\left(\tilde{\epsilon}\right)\right\} $.
If $\pi$ is an atom partition we call $\G\sslash\pi$ an \textit{atom
contraction}. For example, consider the partition of $\pi=\left\{ \left\{ v_{1}\right\} ,\left\{ v_{2}\right\} ,\left\{ v_{3}\right\} ,\left\{ v_{4},v_{5}\right\} \right\} $,
for the graph $\G$ shown in Figure \ref{fig:Quotient-and-graph}.
The quotient $\G/\pi$ and contraction $\gcont$ of the graph are
shown in Figure \ref{fig:Quotient-and-graph}. Notice that this is
an example of an atom partition and atom contraction. 
\begin{center}
\begin{figure}
\centering{}\hfill{}\subfloat[Full order graph $\protect\G$ and its vertex partition $\pi$.]{\centering{}\includegraphics[width=1.4in,height=2in]{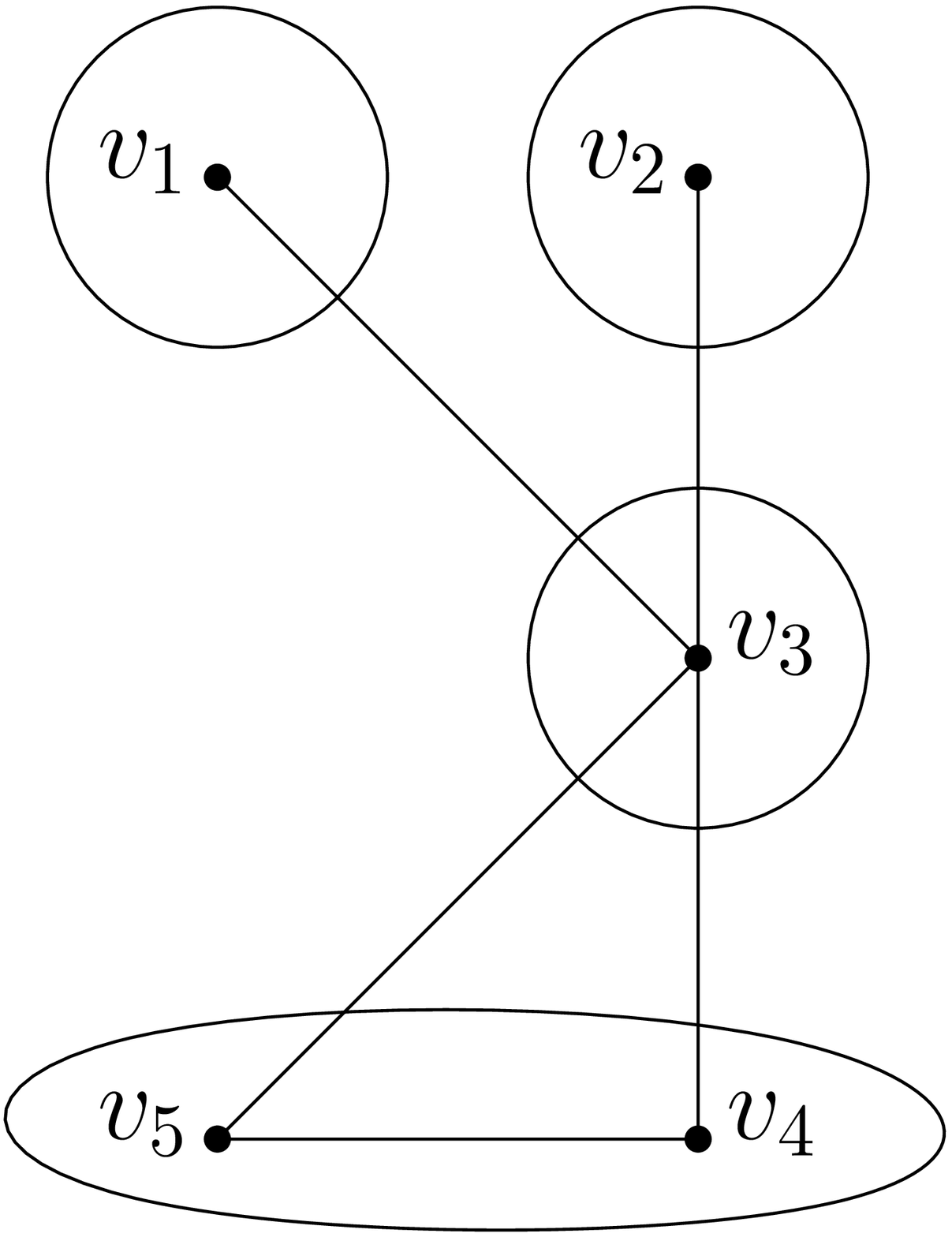}}\hfill{}\subfloat[The graph quotient $\protect\G\backslash\pi$.]{\centering{}\includegraphics[width=1.4in,height=2in]{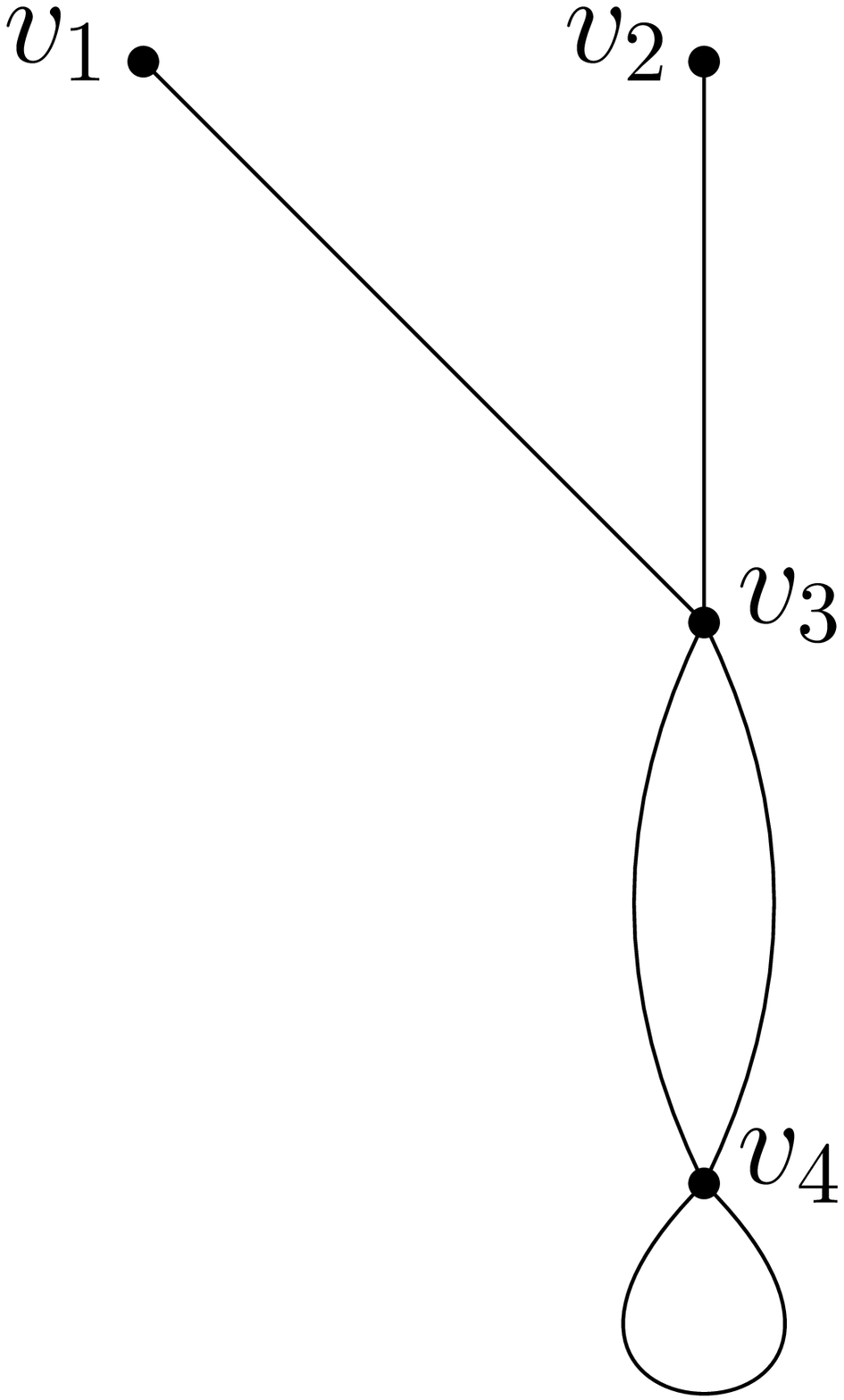}}\hfill{}\subfloat[The graph contration $\protect\gcont$.]{\centering{}\includegraphics[width=1.4in,height=2in]{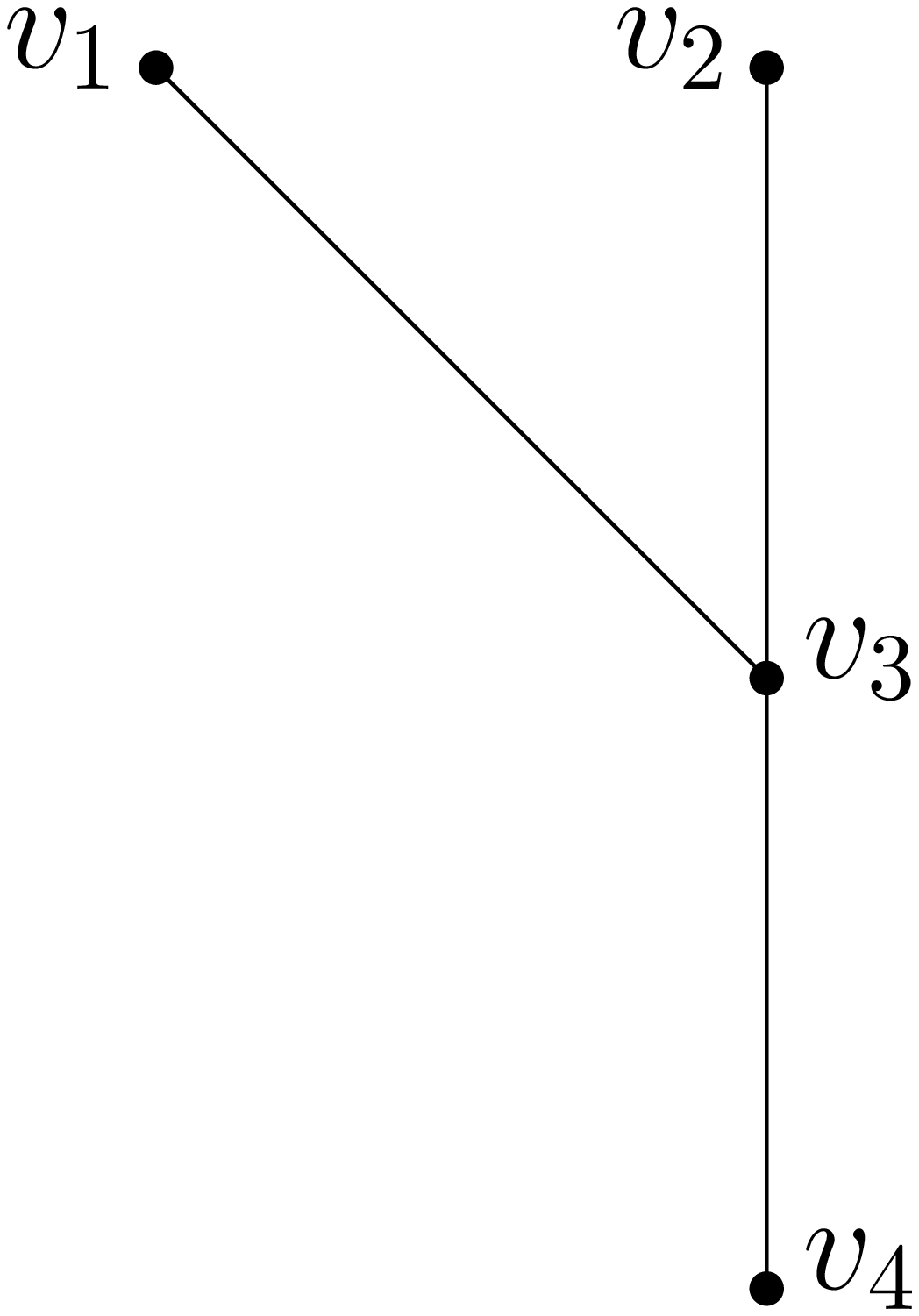}}\hfill{}\caption{Full order graph and its quotient and contraction over the vertex
partition $\pi=\left\{ \left\{ v_{1}\right\} ,\left\{ v_{2}\right\} ,\left\{ v_{3}\right\} ,\left\{ v_{4},v_{5}\right\} \right\} $.\label{fig:Quotient-and-graph}}
\end{figure}
\end{center}

Node removal is the simplest graph-reduction method. However, in some
cases the same reduced graph can be obtained either from node-removal
or from a graph contraction. We define here these contractions as
node-removal equivalent contractions.

\begin{Definition}[node-removal equivalent contraction]\label{node-removal equivalent contraction}
For the graph $\G$ and its contraction $\gcont$, we say that $\gcont$
is \emph{node-removal equivalent} if there is a subset $\V_{S}\subset\V\left(\G\right)$
such that $\G\sslash\pi=\G\backslash\V_{S}$.\end{Definition}

Cycles play an important role in the properties of graphs, and we
define a cycle-invariant graph contraction as a contraction that preserves
the cycle structure of the full graph.

\begin{Definition}[cycle-invariant contraction]\label{cycle-invariat contraction}
Consider a graph $\G$ and its contraction $\gcont$, then we say
that the contraction $\gcont$ is \emph{cycle-invariant} if there
is one-to-one mapping between the set of simple cycles of the full-order
graph and the set of simple cycles of the contracted graph.\end{Definition}

For example, consider the partition $\pi=\left\{ \left\{ v_{1},v_{2},v_{3}\right\} ,\left\{ v_{4}\right\} ,\left\{ v_{5}\right\} \right\} $
for the graph shown in Figure \ref{fig:Cycle-invariant-contraction}.
The resulting contraction over the graph is cycle-invariant (Definition
\ref{cycle-invariat contraction}) with the cycle $v_{3}v_{4}v_{5}v_{3}$
of $\G$ mapped to the cycle $v_{1}v_{2}v_{3}v_{1}$ of $\gcont$,
and is also node-removal equivalent (Definition \ref{node-removal equivalent contraction})
with $\V_{S}=\left\{ v_{1},v_{2}\right\} $. Notice that if the edge
$\{v_{1},v_{5}\}$ were added in Figure \ref{fig:Cycle-invariant-contraction},
the same contraction would \emph{not} be a cycle-invariant contraction;
however, it would still be node-removal equivalent with $\V_{S}=\left\{ v_{1},v_{2}\right\} $. 
\begin{center}
\begin{figure}
\hfill{}\subfloat[Full order graph $\protect\G$ and its vertex partition $\pi=\left\{ \left\{ v_{1},v_{2},v_{3}\right\} ,\left\{ v_{4}\right\} ,\left\{ v_{5}\right\} \right\} $.]{\centering{}\includegraphics[width=2in,height=2.5in]{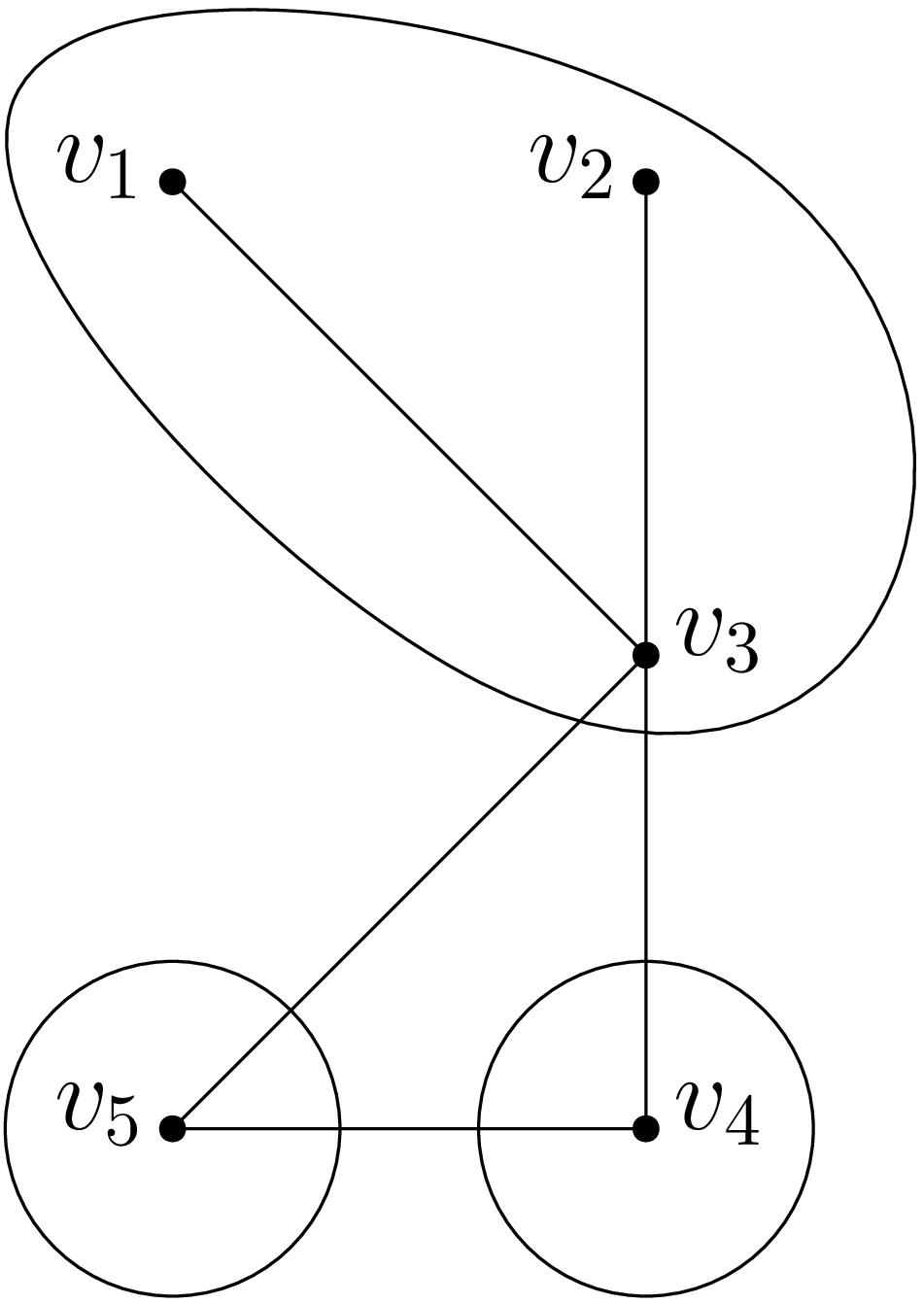}}\hfill{}\subfloat[Cycle invariant graph and node-removal equivalent contraction $\protect\gcont$.]{\centering{}\includegraphics[width=2in,height=2.5in]{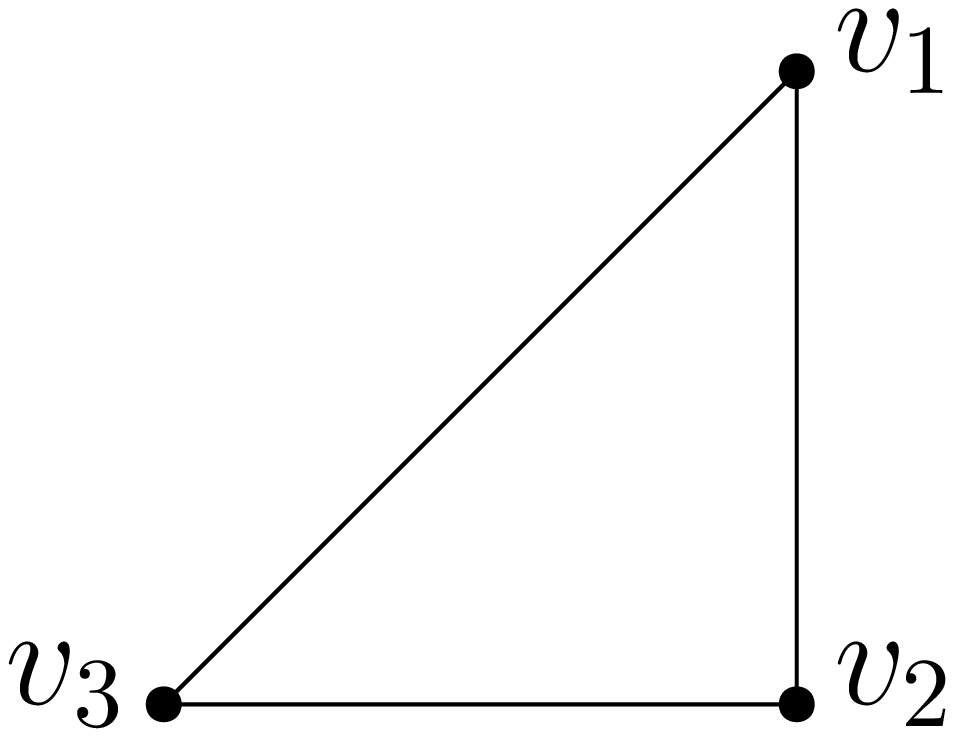}}\hfill{}\caption{Full order graph and its cycle-invariant and node-removal equivalent
contraction.\label{fig:Cycle-invariant-contraction}}
\end{figure}
\end{center}

\begin{lem}[subgraph contraction lemma] \label{subgraph contraction}Consider
a graph $\G$ and its subgraph $\G_{R}=\G\backslash\E_{R}$ for $\E_{R}\subseteq\E\left(\G\right)$.
Then for any $\pi\in\Pi\left(\G\right)$, $\G_{R}\sslash\pi\subseteq\gcont$.
\end{lem}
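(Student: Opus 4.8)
The plan is to reduce the claimed subgraph relation to a containment of edge sets, since both contractions are built on the same vertex set. Observe first that $\G_{R}=\G\backslash\E_{R}$ has vertex set $\V\left(\G_{R}\right)=\V\left(\G\right)$, so $\pi\in\Pi\left(\G\right)$ is equally a partition of $\V\left(\G_{R}\right)$, and both $\gcont$ and $\G_{R}\sslash\pi$ are graphs on the common vertex set $\left[1,r\right]$ built from the same partition function $f_{\pi}$. Hence $\V\left(\G_{R}\sslash\pi\right)=\V\left(\gcont\right)$, and to establish $\G_{R}\sslash\pi\subseteq\gcont$ it suffices to show $\E\left(\G_{R}\sslash\pi\right)\subseteq\E\left(\gcont\right)$.

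First I would give a clean edge characterization for a contraction directly at the level of the simple graph, bypassing the intermediate multi-graph quotient. Unwinding the definition of the contraction, a pair $\left\{ a,b\right\}$ with $a\neq b$ is an edge of $\gcont$ if and only if there is some $\epsilon\in\E\left(\G\right)$ whose endpoints satisfy $\left\{ f_{\pi}\left(h_{\E}\left(\epsilon\right)\right),f_{\pi}\left(t_{\E}\left(\epsilon\right)\right)\right\} =\left\{ a,b\right\}$, i.e., a ``crossing'' edge with one endpoint in $C_{a}\left(\pi\right)$ and the other in $C_{b}\left(\pi\right)$. The self-loop removal is exactly the restriction $a\neq b$, and the deletion of redundant duplicate edges is harmless here because only the \emph{existence} of at least one such crossing edge matters for membership in the simple contraction. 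The identical characterization applies to $\G_{R}\sslash\pi$ with $\E\left(\G\right)$ replaced by $\E\left(\G_{R}\right)$.

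With this characterization the containment is immediate: since $\E\left(\G_{R}\right)=\E\left(\G\right)\backslash\E_{R}\subseteq\E\left(\G\right)$, any crossing edge $\epsilon\in\E\left(\G_{R}\right)$ witnessing $\left\{ a,b\right\} \in\E\left(\G_{R}\sslash\pi\right)$ is also an edge of $\G$ witnessing $\left\{ a,b\right\} \in\E\left(\gcont\right)$. Therefore $\E\left(\G_{R}\sslash\pi\right)\subseteq\E\left(\gcont\right)$, and combined with the equality of vertex sets and the fact that $\E\left(\G_{R}\sslash\pi\right)\subseteq\left[1,r\right]^{2}$, this yields $\G_{R}\sslash\pi\subseteq\gcont$.

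I do not expect a genuine obstacle here; the only care needed is the bookkeeping through the quotient-then-simplify definition of the contraction, ensuring that stripping self-loops and duplicate edges does not interfere with the monotonicity argument. The essential content is simply the monotonicity of the ``a crossing edge exists'' predicate under edge deletion, which is the intuitive reason the lemma holds and which serves as a building block for comparing a graph with its edge-reduced subgraphs in the interlacing results that follow.
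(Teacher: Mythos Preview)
Your proposal is correct and follows essentially the same route as the paper: both arguments observe that $\V\left(\G_{R}\sslash\pi\right)=\V\left(\gcont\right)$ and then, for an arbitrary edge $\tilde{\epsilon}\in\E\left(\G_{R}\sslash\pi\right)$, pick a witnessing edge $\epsilon\in\E\left(\G_{R}\right)\subseteq\E\left(\G\right)$ to conclude $\tilde{\epsilon}\in\E\left(\gcont\right)$. Your treatment is slightly more explicit about why the self-loop and duplicate-edge removal in the contraction definition cause no trouble, but the underlying argument is the same monotonicity observation.
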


\begin{proof}For any $\tilde{\epsilon}\in\E\left(\G_{R}\sslash\pi\right)$
we can find $\epsilon\in\E\left(\G_{R}\right)$ such that \\
$\tilde{\epsilon}=\left\{ f_{\pi}(h_{\E}\left(\epsilon\right)),f_{\pi}\left(t_{\E}\left(\epsilon\right)\right)\right\} $.
Since $\E\left(\G_{R}\right)\subseteq\E\left(\G\right)$, therefore
$\epsilon\in\E\left(\G\right)$ and $\left\{ f_{\pi}(h_{\E}\left(\epsilon\right)),f_{\pi}\left(t_{\E}\left(\epsilon\right)\right)\right\} \in\E\left(\G\sslash\pi\right)$.
We conclude that $\E\left(\G_{R}\sslash\pi\right)\subseteq\E\left(\G\sslash\pi\right)$,
and since $\V\left(\G_{R}\sslash\pi\right)=\V\left(\G\sslash\pi\right)$
we obtain that $\G_{R}\sslash\pi\subseteq\G\sslash\pi$.\end{proof}

\begin{lem}\label{contracted neigborhood connectedness} Consider
a graph $\G$ and its contraction $\gcont$ for $\pi\in\Pi\left(\G\right)$.
Then $\forall u\in\V\left(\G\right),\forall\tilde{u}\in\V\left(\gcont\right)$,
we have $u\in\N_{C_{\tilde{u}}}\left(\G\right)$ if and only if $f_{\pi}\left(u\right)\sim\tilde{u}$.
\end{lem}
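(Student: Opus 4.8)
The plan is to prove the biconditional by passing edges back and forth between $\G$ and $\gcont$ through the partition function $f_{\pi}$. The single structural fact I need is the contraction's edge rule: every edge $\epsilon=\{x,y\}\in\E\left(\G\right)$ is sent to the pair $\{f_{\pi}(x),f_{\pi}(y)\}$, which is an edge of $\gcont$ exactly when it is not a self-loop, i.e., when $f_{\pi}(x)\neq f_{\pi}(y)$. Both directions then reduce to unpacking the cell-neighborhood definition $\N_{C_{\tilde{u}}}\left(\G\right)=\{\cup_{v\in C_{\tilde{u}}}\N_{v}\left(\G\right)\}\backslash C_{\tilde{u}}$ and tracking one edge across the contraction.

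For the forward direction I would assume $u\in\N_{C_{\tilde{u}}}\left(\G\right)$. The cell-neighborhood definition delivers two facts at once: $u\notin C_{\tilde{u}}$, and $u\sim v$ in $\G$ for some $v\in C_{\tilde{u}}$. The edge $\{u,v\}$ is sent by the quotient to $\{f_{\pi}(u),f_{\pi}(v)\}=\{f_{\pi}(u),\tilde{u}\}$, and because $u\notin C_{\tilde{u}}$ forces $f_{\pi}(u)\neq\tilde{u}$, this image is not a self-loop and hence survives as an edge of $\gcont$. Thus $f_{\pi}(u)\sim\tilde{u}$, and this direction is routine once the definition is spelled out.

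For the converse I would begin from $f_{\pi}(u)\sim\tilde{u}$, so that $\{f_{\pi}(u),\tilde{u}\}\in\E\left(\gcont\right)$, and pull this edge back: by the contraction's edge rule there is some $\{a,b\}\in\E\left(\G\right)$ with $\{f_{\pi}(a),f_{\pi}(b)\}=\{f_{\pi}(u),\tilde{u}\}$; relabel so that $f_{\pi}(a)=f_{\pi}(u)$ and $f_{\pi}(b)=\tilde{u}$. Then $a$ lies in the same cell as $u$, is adjacent to $b\in C_{\tilde{u}}$, and satisfies $a\notin C_{\tilde{u}}$, so $a\in\N_{C_{\tilde{u}}}\left(\G\right)$.

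The step I expect to be the main obstacle is precisely closing this converse. The pulled-back edge only exhibits \emph{some} representative $a$ of the cell $C_{f_{\pi}(u)}$ lying in $\N_{C_{\tilde{u}}}\left(\G\right)$, whereas the conclusion is phrased for the prescribed node $u$. Bridging this requires reading membership in $\N_{C_{\tilde{u}}}\left(\G\right)$ as a property determined by the cell $f_{\pi}(u)$ rather than by $u$ alone, equivalently that $u$ may be exchanged for any of its cell-mates without affecting the left-hand side. This cell-level interpretation of the neighborhood condition is the point I would check most carefully against the exact statement, since a literal single-node edge-chase terminates at $a$ and not necessarily at $u$.
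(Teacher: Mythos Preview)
Your forward direction is exactly the paper's argument, and your extra remark that $u\notin C_{\tilde{u}}$ is needed to rule out a self-loop is a detail the paper leaves implicit.

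Your hesitation on the converse is fully justified: the step you flag is not merely delicate, it actually fails. The paper's own proof asserts ``If $f_{\pi}(u)\sim\tilde{u}$, then $\exists v\in C_{\tilde{u}}$ such that $u\sim v$'' without further argument, and this inference is invalid. Take $\G$ to be the path $1\text{--}2\text{--}3\text{--}4$ with partition $\pi=\{\{1,2\},\{3\},\{4\}\}$, and set $u=1$, $\tilde{u}=2$ (the cell $\{3\}$). Then $f_{\pi}(1)=1\sim 2=\tilde{u}$ in $\gcont$ (witnessed by the original edge $\{2,3\}$), yet $\N_{C_{\tilde{u}}}(\G)=\N_{3}(\G)\setminus\{3\}=\{2,4\}$ does not contain $1$. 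The right-hand side of the biconditional depends only on the cell $f_{\pi}(u)$, while the left-hand side genuinely depends on the particular vertex $u$, so the lemma as literally stated is false.

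What your edge-chase does establish, stopping at the representative $a$, is the cell-level equivalence
\[
C_{f_{\pi}(u)}\cap\N_{C_{\tilde{u}}}(\G)\neq\emptyset \iff f_{\pi}(u)\sim\tilde{u},
\]
equivalently $f_{\pi}\bigl(\N_{C_{\tilde{u}}}(\G)\bigr)=\N_{\tilde{u}}(\gcont)$. This is precisely the form in which the lemma is invoked in the paper (in the degree-contraction proposition that follows), and that version is correct. So your argument is complete for the statement the paper actually needs; it is the lemma's phrasing, not your proof, that is defective.
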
 \begin{proof} If $u\in\N_{C_{\tilde{u}}}$ then $\exists v\in C_{\tilde{u}}$
such that $u\sim v$ with $\epsilon=\left\{ u,v\right\} \in\E\left(\G\right)$,
and therefore $\left\{ f_{\pi}\left(u\right),f_{\pi}\left(v\right)\right\} =\left\{ f_{\pi}\left(u\right),\tilde{u}\right\} \in\E\left(\gcont\right)$
and $f_{\pi}\left(u\right)\sim\tilde{u}$. If $f_{\pi}\left(u\right)\sim\tilde{u}$,
then $\exists v\in C_{\tilde{u}}$ such that $u\sim v$ and therefore
$u\in\N_{C_{\tilde{u}}}$. \end{proof}

\begin{lem} \label{If--is connected}If a graph $\G$ is connected
then its graph contraction $\G\sslash\pi$ is connected. \end{lem}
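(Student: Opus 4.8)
The plan is to show that any two vertices of $\gcont$ are joined by a walk, which is enough for connectivity. Given two vertices $\tilde u,\tilde w\in\V\left(\gcont\right)=\left[1,r\right]$, I would first pick representatives $u\in C_{\tilde u}\left(\pi\right)$ and $w\in C_{\tilde w}\left(\pi\right)$, so that $f_{\pi}\left(u\right)=\tilde u$ and $f_{\pi}\left(w\right)=\tilde w$. Since $\G$ is connected, there is a path $u=x_{0},x_{1},\ldots,x_{\ell}=w$ in $\G$ with $\left\{x_{i},x_{i+1}\right\}\in\E\left(\G\right)$ for each $i$.

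The core step is to push this path forward through the partition function. For each edge $\left\{x_{i},x_{i+1}\right\}$ of the path, the pair $\left\{f_{\pi}\left(x_{i}\right),f_{\pi}\left(x_{i+1}\right)\right\}$ is an edge of the quotient $\G/\pi$ by definition. Two cases arise: either $f_{\pi}\left(x_{i}\right)=f_{\pi}\left(x_{i+1}\right)$, meaning $x_{i}$ and $x_{i+1}$ lie in the same cell and the edge contracts to a self-loop that is removed in passing from $\G/\pi$ to $\gcont$; or $f_{\pi}\left(x_{i}\right)\neq f_{\pi}\left(x_{i+1}\right)$, in which case $\left\{f_{\pi}\left(x_{i}\right),f_{\pi}\left(x_{i+1}\right)\right\}\in\E\left(\gcont\right)$ and hence $f_{\pi}\left(x_{i}\right)\sim f_{\pi}\left(x_{i+1}\right)$. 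Discarding the self-loop steps, that is, deleting consecutive repetitions from the sequence $f_{\pi}\left(x_{0}\right),\ldots,f_{\pi}\left(x_{\ell}\right)$, leaves a walk in $\gcont$ that begins at $\tilde u$ and ends at $\tilde w$.

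Since $\tilde u$ and $\tilde w$ are connected by a walk in $\gcont$, they lie in the same connected component; as the pair was arbitrary, $\gcont$ is connected. The only genuine subtlety is the self-loop case, where consecutive path vertices share a cell: this is handled simply by collapsing the repeated cell indices, and because the endpoints $f_{\pi}\left(x_{0}\right)=\tilde u$ and $f_{\pi}\left(x_{\ell}\right)=\tilde w$ are retained, the resulting walk still joins the two desired vertices. I do not expect any serious obstacle beyond carefully accounting for these collapsed steps, so the argument is short and essentially bookkeeping once the image of the path under $f_{\pi}$ is recognized as a walk.
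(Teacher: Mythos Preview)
Your proof is correct and follows essentially the same approach as the paper: pick representatives in the cells, take a path in $\G$ between them, and push it forward via $f_{\pi}$ to obtain a walk in $\gcont$. You are slightly more explicit than the paper about collapsing the self-loop steps, but the argument is the same.
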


\begin{proof}If $\G$ is connected then $\forall u,v\in\V$, there
is a path $uu_{1}u_{2}\ldots u_{p}v$. For any $\tilde{u},\tilde{v}\in\V\left(\G\sslash\pi\right)$
we can find $u,v\in\V$ such that $f_{\pi}\left(u\right)=\tilde{u}$
and $f_{\pi}\left(v\right)=\tilde{v}$. If we then apply the partition
function on the path $uu_{1}u_{2}\ldots u_{p}v$ we obtain a walk
(including self loops) in $\gcont$, $\tilde{u}f_{\pi}\left(u_{1}\right)f_{\pi}\left(u_{2}\right)\ldots f_{\pi}\left(u_{p}\right)\tilde{v}$,
therefore, $\gcont$ is a connected graph.\end{proof}

The following result relates the degree of a node in a contracted
graph to its cell-neighborhood.

\begin{prop}[degree-contraction]\label{degree-contraction}Consider
a graph $\G$ and its contraction $\gcont$ for $\pi\in\Pi\left(\G\right)$.
Then $\forall\tilde{v}\in\V\left(\gcont\right)$, $d_{\tilde{v}}\left(\gcont\right)=\left|f_{\pi}\left(\N_{C_{\tilde{v}}}\left(\G\right)\right)\right|$.\end{prop}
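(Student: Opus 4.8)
The plan is to reduce the entire statement to a single set identity,
\begin{equation*}
\N_{\tilde{v}}\left(\gcont\right)=f_{\pi}\left(\N_{C_{\tilde{v}}}\left(\G\right)\right),
\end{equation*}
after which the result is immediate: by the definition of degree we have $d_{\tilde{v}}\left(\gcont\right)=\left|\N_{\tilde{v}}\left(\gcont\right)\right|$, so taking cardinalities of both sides of the identity yields $d_{\tilde{v}}\left(\gcont\right)=\left|f_{\pi}\left(\N_{C_{\tilde{v}}}\left(\G\right)\right)\right|$. Fixing an arbitrary contracted vertex $\tilde{v}\in\V\left(\gcont\right)$, I would establish the identity by proving the two inclusions separately.

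For the inclusion $f_{\pi}\left(\N_{C_{\tilde{v}}}\left(\G\right)\right)\subseteq\N_{\tilde{v}}\left(\gcont\right)$, I would take an arbitrary element, which has the form $\tilde{u}=f_{\pi}\left(x\right)$ for some $x\in\N_{C_{\tilde{v}}}\left(\G\right)$; since $x$ lies in the cell neighborhood of $C_{\tilde{v}}$, Lemma \ref{contracted neigborhood connectedness} gives $f_{\pi}\left(x\right)\sim\tilde{v}$, that is $\tilde{u}\in\N_{\tilde{v}}\left(\gcont\right)$. For the reverse inclusion, I would take $\tilde{u}\in\N_{\tilde{v}}\left(\gcont\right)$, so $\tilde{u}\sim\tilde{v}$ with $\tilde{u}\neq\tilde{v}$. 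By the construction of the contraction, the edge $\left\{\tilde{u},\tilde{v}\right\}$ of $\gcont$ must originate from some edge $\left\{x,y\right\}\in\E\left(\G\right)$ with $x\in C_{\tilde{u}}$ and $y\in C_{\tilde{v}}$; because $\tilde{u}\neq\tilde{v}$, the endpoint $x$ lies outside $C_{\tilde{v}}$ while being adjacent to $y\in C_{\tilde{v}}$, hence $x\in\N_{C_{\tilde{v}}}\left(\G\right)$ and $\tilde{u}=f_{\pi}\left(x\right)\in f_{\pi}\left(\N_{C_{\tilde{v}}}\left(\G\right)\right)$. The two inclusions together give the identity.

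The step I expect to require the most care is this reverse inclusion, where one must trace an edge of the contracted graph back to a witnessing edge of $\G$ and verify that its endpoint genuinely sits in $\N_{C_{\tilde{v}}}$ rather than inside $C_{\tilde{v}}$ itself; the exclusion of $C_{\tilde{v}}$ built into the definition of the cell neighborhood is exactly what makes this work. The conceptual subtlety underlying the whole statement is that $\N_{C_{\tilde{v}}}\left(\G\right)$ may contain several full-order nodes carrying the same cell index—these are precisely the endpoints of parallel edges that collapse when the quotient $\G/\pi$ is reduced to the simple contraction $\gcont$—so the degree is recovered not by the raw count $\left|\N_{C_{\tilde{v}}}\left(\G\right)\right|$ but by the number of distinct images $\left|f_{\pi}\left(\N_{C_{\tilde{v}}}\left(\G\right)\right)\right|$, which is exactly why the partition function $f_{\pi}$ appears in the statement.
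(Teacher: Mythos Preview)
Your proof is correct and follows essentially the same route as the paper: both arguments reduce the claim to the set identity $\N_{\tilde{v}}\left(\gcont\right)=f_{\pi}\left(\N_{C_{\tilde{v}}}\left(\G\right)\right)$ and then take cardinalities. The only cosmetic difference is that the paper invokes both directions of Lemma~\ref{contracted neigborhood connectedness} to obtain the identity in one stroke, whereas you cite the lemma for the inclusion $f_{\pi}\left(\N_{C_{\tilde{v}}}\right)\subseteq\N_{\tilde{v}}\left(\gcont\right)$ and then re-derive the reverse inclusion by hand (tracing the contracted edge back to a witnessing edge of $\G$)---which is exactly the content of the other direction of that lemma.
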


\begin{proof}From Definition \ref{Partition Function} we have $f_{\pi}\left(\N_{C_{\tilde{v}}}\left(\G\right)\right)=\left\{ i\in\left[1,r\right]|C_{i}\left(\pi\right)\cap\N_{C_{\tilde{v}}}\left(\G\right)\neq\emptyset\right\} $,
and from Lemma \ref{contracted neigborhood connectedness} we obtain
that $\forall\tilde{u},\tilde{v}\in\V\left(\gcont\right)$, $\tilde{v}\sim\tilde{u}$
if and only if $\tilde{u}\in f_{\pi}\left(\N_{C_{\tilde{v}}}\right)$
such that $f_{\pi}\left(\N_{C_{\tilde{v}}}\right)=\N_{\tilde{v}}\left(\gcont\right)$,
and therefore, $d_{\tilde{v}}\left(\gcont\right)=\left|f_{\pi}\left(\N_{C_{\tilde{v}}}\right)\right|$.
\end{proof}

\subsection{Graph Contraction Posets}

Partially-ordered sets (posets) are an essential set-theoretical concept.
Chains are totally-ordered subsets of the posets and are a useful
tool for proving set-theoretical results. Here we show how graph contractions
fall under the definition of a poset and will then establish contraction
chains and their corresponding contraction sequences as a basis for
proving cases of graph matrices interlacing.

Two partitions $\pi_{r_{1}},\pi_{r_{2}}\in\Pi\left(\V\right)$ may
comply with a refinement relation.

\begin{Definition}[refinement]\label{partition refinement} Consider
two partitions $\pi_{r_{1}},\pi_{r_{2}}\in\Pi\left(\V\right)$ of
a vertex set $\V$ where $r_{1}\leq r_{2}\leq\left|\V\right|$. Then
we say $\pi_{r_{2}}$ is a \emph{refinement} of $\pi_{r_{1}}$ if
$\forall j\in\left\{ 1,2,\ldots,r_{2}\right\} $ we can find $i\in\left\{ 1,2,\ldots,r_{1}\right\} $
such that $C_{j}\left(\pi_{r_{2}}\right)\subseteq C_{i}\left(\pi_{r_{1}}\right)$,
and we denote $\pi_{r_{2}}\leq\pi_{r_{1}}$. If $\pi_{r_{2}}\leq\pi_{r_{1}}$
and $r_{1}<r_{2}$ we denote $\pi_{r_{2}}<\pi_{r_{1}}$. An $N$\emph{-chain}
is a partition set $\chi\left(\V\right)=\left\{ \pi_{r_{i}}\right\} _{i=1}^{N}\subseteq\Pi\left(\V\right)$
such that $\pi_{r_{1}}<\pi_{r_{2}}<\ldots<\pi_{r_{N}}$.\end{Definition}
If two partitions $\pi_{r_{1}},\pi_{r_{2}}\in\Pi\left(\V\right)$
comply with the refinement relation, we can construct the \emph{coarsening}
partition $\delta\left(\pi_{r_{2}},\pi_{r_{1}}\right)\in\Pi_{r_{1}}\left(\V_{r_{2}}\right)$
with $C_{j}\left(\delta\left(\pi_{r_{2}},\pi_{r_{1}}\right)\right)=\left\{ k\in\left\{ 1,2,\ldots,r_{2}\right\} |C_{k}\left(\pi_{r_{2}}\right)\subseteq C_{j}\left(\pi_{r_{1}}\right)\right\} $.
We can now define the coarsening sequence.

\begin{Definition}[coarsening sequence]\label{ coarsing sequence}Consider
a vertex set $\V$ and its $N$\emph{-chain }$\chi(\V)\subseteq\Pi(\V)$.
Then we define the \textit{coarsening sequence} as $\Delta(\chi)=\left\{ \delta_{i}\right\} _{i=1}^{N-1}$
with $\delta_{i}\triangleq\delta(\pi_{r_{i+1}},\pi_{r_{i}})$.\end{Definition}

The refinement relation is reflexive, anti-symmetric and transitive,
therefore, the set of partitions together with the refinement relation,
$\left(\Pi\left(\V\right),\leq\right)$, falls under the definition
of a finite \textit{partial-ordered set} (poset). Let $\graph$, we
define the contraction set $\G\sslash\Pi\triangleq\left\{ \gcont|\pi\in\Pi\left(\V\right)\right\} $,
and define the contraction binary relation $\gcont_{r_{1}}\leq\gcont_{r_{2}}$
if $\pi_{r_{1}}\leq\pi_{r_{2}}$. Since there is a one-to-one correspondence
between $\left(\G\sslash\Pi,\leq\right)$ and $\left(\Pi\left(\V\right),\leq\right)$,
the contraction set with the contraction binary relation, $\left(\G\sslash\Pi,\leq\right)$,
is also a poset, and for each $N$-chain $\chi\subseteq\Pi\left(\V\right)$
there is a corresponding \emph{contraction chain }$\G\sslash\chi=\left\{ \G\sslash\pi_{r_{i}}\right\} _{i=1}^{N}\subseteq\G\sslash\Pi$.

For each coarsening sequence $\Delta\left(\chi\right)$ we can then
define a corresponding contraction sequence, a series of graphs where
each graph in the series is a graph contraction of the former graph
over the coarsening partition in the coarsening sequence.

\begin{Definition}[contraction sequence]\label{contraction sequence}
Consider a graph $\G$ and an N-chain $\chi(\V)\subseteq\Pi\left(\V\left(\G\right)\right)$
with coarsening sequence $\Delta\left(\chi\right)=\left\{ \delta_{i}\right\} _{i=1}^{N-1}$.
Then we define the \emph{contraction sequence} $\G\sslash\Delta\left(\chi\right)\triangleq\left\{ \G_{i}\right\} _{i=0}^{N-1}$
with $\G_{i}=\G_{i-1}\sslash\delta_{N-i}$ and $\G_{0}=\G\sslash\pi_{r_{N}}$.
\end{Definition}

\begin{prop}Consider a graph $\G$ and its partition $\pi\in\Pi\left(\G\right)$,
and let $\chi=\left\{ \pi_{r_{i}}\right\} _{i=1}^{N}\subseteq\Pi\left(\V\right)$
be a chain with $\pi_{r_{1}}=\pi$ and corresponding contraction sequence
$\G\sslash\Delta\left(\chi\right)=\left\{ \G_{i}\right\} _{i=0}^{N-1}$.
Then $\G_{N-1}=\gcont$.\end{prop}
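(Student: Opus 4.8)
The plan is to reduce the statement to a single \emph{composability} property of graph contractions and then telescope it along the chain by induction. The key lemma I would establish first is that contracting a contraction equals a single contraction: whenever a partition $\pi_{a}$ refines a partition $\pi_{b}$ of $\V\left(\G\right)$ (so that $\delta\left(\pi_{a},\pi_{b}\right)$ is defined),
\[
\left(\G\sslash\pi_{a}\right)\sslash\delta\left(\pi_{a},\pi_{b}\right)=\G\sslash\pi_{b}.
\]
Granting this, the proposition follows by a short induction showing $\G_{i}=\G\sslash\pi_{r_{N-i}}$ for $i=0,\ldots,N-1$: the base case $\G_{0}=\G\sslash\pi_{r_{N}}$ is the definition, and the inductive step substitutes $\G_{i-1}=\G\sslash\pi_{r_{N-i+1}}$ into
\[
\G_{i}=\G_{i-1}\sslash\delta_{N-i}=\left(\G\sslash\pi_{r_{N-i+1}}\right)\sslash\delta\left(\pi_{r_{N-i+1}},\pi_{r_{N-i}}\right)
\]
and applies the lemma to collapse the right-hand side to $\G\sslash\pi_{r_{N-i}}$. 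Taking $i=N-1$ and using $\pi_{r_{1}}=\pi$ yields $\G_{N-1}=\G\sslash\pi$, as required. The lemma applies at each step because consecutive members of the chain satisfy the refinement relation of Definition \ref{partition refinement}, which is exactly what makes each $\delta_{N-i}$ well defined.

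To prove the composability lemma I would work at the level of partition functions. The central observation is that partition functions compose: for every $v\in\V\left(\G\right)$,
\[
f_{\pi_{b}}\left(v\right)=f_{\delta\left(\pi_{a},\pi_{b}\right)}\left(f_{\pi_{a}}\left(v\right)\right).
\]
This is immediate from the definitions: if $f_{\pi_{a}}\left(v\right)=k$ then $v\in C_{k}\left(\pi_{a}\right)$, and $f_{\delta\left(\pi_{a},\pi_{b}\right)}\left(k\right)=j$ means $C_{k}\left(\pi_{a}\right)\subseteq C_{j}\left(\pi_{b}\right)$ by the construction of the coarsening partition, so $v\in C_{j}\left(\pi_{b}\right)$ and $f_{\pi_{b}}\left(v\right)=j$. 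I would first check the vertex sets agree: $\G\sslash\pi_{b}$ has vertex set $\left[1,r_{b}\right]$, while $\delta\left(\pi_{a},\pi_{b}\right)$ partitions the vertex set $\left[1,r_{a}\right]$ of $\G\sslash\pi_{a}$ into exactly $r_{b}$ cells, so the iterated contraction also has vertex set $\left[1,r_{b}\right]$. For the edges, every edge $\epsilon=\left\{ u,v\right\} \in\E\left(\G\right)$ is sent, in a single contraction, to $\left\{ f_{\pi_{b}}\left(u\right),f_{\pi_{b}}\left(v\right)\right\} $, and in the iterated contraction to $\left\{ f_{\delta\left(\pi_{a},\pi_{b}\right)}\left(f_{\pi_{a}}\left(u\right)\right),f_{\delta\left(\pi_{a},\pi_{b}\right)}\left(f_{\pi_{a}}\left(v\right)\right)\right\} $; these coincide by the composition identity. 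Since both constructions produce simple graphs by discarding self-loops and duplicate edges, the two resulting edge sets are literally the same, and the graphs are equal.

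The step I expect to require the most care is verifying that the self-loop and duplicate-edge removals are consistent between the two sides, since this is where the simple-graph reduction could in principle differ. The point to check is that an edge $\left\{ u,v\right\} $ contributes a self-loop at the coarse level, i.e.\ $f_{\pi_{b}}\left(u\right)=f_{\pi_{b}}\left(v\right)$, precisely when it is discarded somewhere in the iterated process. If it is already a self-loop in $\G\sslash\pi_{a}$, so $f_{\pi_{a}}\left(u\right)=f_{\pi_{a}}\left(v\right)$, then because $\pi_{a}$ refines $\pi_{b}$ the endpoints share a cell of $\pi_{a}$ and hence a cell of $\pi_{b}$, giving $f_{\pi_{b}}\left(u\right)=f_{\pi_{b}}\left(v\right)$. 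If it becomes a self-loop only in the second contraction, that happens exactly when $f_{\pi_{a}}\left(u\right)$ and $f_{\pi_{a}}\left(v\right)$ share a cell of $\delta\left(\pi_{a},\pi_{b}\right)$, which by the composition identity again means $f_{\pi_{b}}\left(u\right)=f_{\pi_{b}}\left(v\right)$. Thus the set of surviving non-loop edges and the identification of parallel edges are identical on both sides, which closes the argument; everything stays within simple connected graphs throughout by Lemma \ref{If--is connected}.
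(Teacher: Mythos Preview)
Your proof is correct and follows essentially the same approach as the paper: both reduce to the two-chain case by proving the composability identity $\left(\G\sslash\pi_{a}\right)\sslash\delta\left(\pi_{a},\pi_{b}\right)=\G\sslash\pi_{b}$ via the partition-function composition $f_{\pi_{b}}=f_{\delta\left(\pi_{a},\pi_{b}\right)}\circ f_{\pi_{a}}$, and then extend by induction along the chain. Your treatment is in fact more careful than the paper's, which leaves the self-loop and duplicate-edge consistency (and one direction of the edge-set equality) implicit.
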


\begin{proof}It is sufficient to prove for any two-chain $\pi=\pi_{r_{1}}<\pi_{r_{2}}$
with $\Delta\left(\chi\right)=\delta\left(\pi_{r_{2}},\pi_{r_{1}}\right)$,
i.e., $\gcont=\left(\G\sslash\pi_{r_{2}}\right)\sslash\delta\left(\pi_{r_{2}},\pi_{r_{1}}\right)$,
and extend by induction for $N>2$. The order of $\G_{0}=\G\sslash\pi_{r_{2}}$
is $r_{2}$ and from the coarsening sequence (Definition \ref{ coarsing sequence})
we get that the order of $\G_{1}=\left(\G\sslash\pi_{r_{2}}\right)\sslash\delta\left(\pi_{r_{2}},\pi_{r_{1}}\right)$
is $r_{1}=\left|\pi\right|$, therefore, $\V\left(\G_{1}\right)=\V\left(\gcont\right)$.
It is left to show that $\E\left(\G_{1}\right)=\E\left(\gcont\right)$.
Let $\tilde{\epsilon}\in\E\left(\gcont\right)$ then $\exists\epsilon\in\E_{\G}$
such that $\tilde{\epsilon}=f_{\pi}\left(\epsilon\right)$. Now let
$\epsilon_{1}=f_{\pi_{r_{2}}}\left(\epsilon\right)$ and $\epsilon_{2}=f_{\delta}\left(\epsilon_{1}\right)$,
from the coarsening sequence (Definition \ref{ coarsing sequence})
we then obtain that the end nodes of $\epsilon_{2}$ are the end nodes
of $\tilde{\epsilon}$, therefore, $\E\left(\G_{1}\right)=\E\left(\gcont\right)$.
\end{proof}

\begin{coro}[atom-contraction sequence]\label{atom-contraction sequence}Consider
a graph $\G$ and its partition $\pi\in\Pi_{r}\left(\G\right)$ for
$r<n$. Then there exists a chain $\chi\left(\V\right)=\left\{ \pi_{r_{i}}\right\} _{i=1}^{n-r+1}\subseteq\Pi\left(\V_{n}\right)$
such that $\G\sslash\Delta\left(\chi\right)=\left\{ \G_{i}\right\} _{i=0}^{n-r}$
is an atom contraction sequence, i.e., $\delta\left(\pi_{r_{i+1}},\pi_{r_{i}}\right)$
is an atom-partition.\end{coro}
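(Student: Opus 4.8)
The plan is to realize $\pi$ as the coarse end of an explicitly constructed chain that refines one vertex at a time until it reaches the identity partition $\pi_n$. Concretely, following the index convention used in the preceding proposition (coarsest partition first), I would build
\[
\pi=\pi_{r_{1}}<\pi_{r_{2}}<\cdots<\pi_{r_{n-r+1}}=\pi_{n},\qquad r_{i}=r+i-1,
\]
so that $\pi_{r_{i+1}}$ is obtained from $\pi_{r_{i}}$ by splitting exactly one cell into two parts. By the definition of the contraction sequence (Definition \ref{contraction sequence}), $\G\sslash\Delta(\chi)$ is an atom contraction sequence precisely when every coarsening $\delta_{i}=\delta(\pi_{r_{i+1}},\pi_{r_{i}})$ is an atom partition; hence the whole statement reduces to producing a chain whose consecutive partitions differ by a single cell split.

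I would construct such a chain by induction on $n-r$. For the base case $n-r=0$ we have $\pi=\pi_{n}$ and the one-element chain $\{\pi_{n}\}$ suffices vacuously. For the inductive step assume $r<n$; since $\pi$ has $r<n$ cells over $n$ vertices, at least one cell $C_{j}(\pi)$ contains two or more vertices. I fix a vertex $v\in C_{j}(\pi)$ and let $\pi^{+}$ be the refinement of $\pi$ obtained by replacing $C_{j}(\pi)$ with the two nonempty cells $C_{j}(\pi)\setminus\{v\}$ and $\{v\}$, leaving every other cell unchanged. Then $\pi^{+}\in\Pi_{r+1}(\V)$ is a refinement of $\pi$ (Definition \ref{partition refinement}), and since $n-(r+1)<n-r$, the induction hypothesis applied to $\pi^{+}$ yields a chain $\pi^{+}=\pi_{r_{2}}<\cdots<\pi_{r_{n-r+1}}=\pi_{n}$ of length $n-r$ all of whose coarsenings are atom partitions. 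Prepending $\pi=\pi_{r_{1}}$ produces a chain of length $n-r+1$.

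It then remains to check the single new coarsening $\delta(\pi^{+},\pi)\in\Pi_{r}(\V_{r+1})$. By the construction of the coarsening partition, each cell $C_{i}(\pi)$ with $i\neq j$ coincides with exactly one cell of $\pi^{+}$ and so contributes a singleton cell to $\delta(\pi^{+},\pi)$, whereas $C_{j}(\pi)$ is covered by exactly the two cells $C_{j}(\pi)\setminus\{v\}$ and $\{v\}$ of $\pi^{+}$ and so contributes a single two-element cell. Thus $\delta(\pi^{+},\pi)$ consists of $r-1$ singletons and one two-vertex cell over the $r+1$ cells of $\pi^{+}$, which is by definition an atom partition. Together with the induction hypothesis this shows every coarsening in $\Delta(\chi)$ is an atom partition, so each $\G_{i}=\G_{i-1}\sslash\delta_{N-i}$ is an atom contraction, with $\G_{0}=\G\sslash\pi_{n}=\G$ and, by the preceding Proposition, $\G_{n-r}=\gcont$, as required.

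The construction itself is elementary, so I expect no genuine obstacle in the mathematics; the main thing to get right is the bookkeeping of index conventions. The chain is read from the coarse partition $\pi$ (at $i=1$) to the fine identity $\pi_{n}$ (at $i=n-r+1$), while the contraction sequence $\{\G_{i}\}_{i=0}^{n-r}$ is traversed in the opposite order through the reversal $\delta_{N-i}$ in Definition \ref{contraction sequence}. Fixing $r_{i}=r+i-1$ at the outset, so that in each $\delta(\pi_{r_{i+1}},\pi_{r_{i}})$ the first argument is the finer partition and the second is the coarser one, keeps the two orderings consistent and is the only delicate point.
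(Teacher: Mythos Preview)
Your proof is correct and follows essentially the same approach as the paper: start from $\pi_{r_1}=\pi$ and repeatedly split off a single vertex from a non-singleton cell until the identity partition $\pi_n$ is reached, giving $n-r$ steps. The paper states this directly as a construction without the inductive wrapper, while you add the explicit verification that each coarsening $\delta(\pi^{+},\pi)$ is an atom partition; the underlying idea is identical.
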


\begin{proof}Choose $\pi_{r_{1}}=\pi\left(\V_{n}\right)$, and then
construct $\pi_{r_{2}}$ by extracting a singleton from a non-singleton
cell of $\pi$. Continue to extract singleton cells until all cells
are singletons, i.e., $\pi_{r_{N}}=\pi_{n}\left(\V_{n}\right)$. The
number of singleton extractions of non-singleton cells in an $r$-partition
is $n-r$, therefore, $N=n-r+1$.\end{proof}

For example, consider the 2-chain $\chi\left(\V_{5}\right)=\left\{ \pi_{2},\pi_{3}\right\} $
with 
\[
\pi_{2}\left(\V_{5}\right)=\left\{ \underbrace{\left\{ v_{1},v_{2},v_{3}\right\} }_{C_{1}},\underbrace{\left\{ v_{4},v_{5}\right\} }_{C_{2}}\right\} ,\mbox{ and }\pi_{3}\left(\V_{5}\right)=\left\{ \underbrace{\left\{ v_{1},v_{2}\right\} }_{C_{1}},\underbrace{\left\{ v_{3}\right\} }_{C_{2}},\underbrace{\left\{ v_{4},v_{5}\right\} }_{C_{3}}\right\} .
\]
We have $C_{1}\left(\pi_{3}\right),C_{2}\left(\pi_{3}\right)\subseteq C_{1}\left(\pi_{2}\right)$
and $C_{3}\left(\pi_{3}\right)\subseteq C_{2}\left(\pi_{2}\right)$,
therefore, $\pi_{3}<\pi_{2}$. We can then construct the \textit{coarsening
sequence }$\Delta\left(\chi\right)=\delta\left(\pi_{3},\pi_{2}\right)$\emph{
}with $\delta\left(\pi_{3},\pi_{2}\right)=\{\underbrace{\left\{ 1,2\right\} }_{C_{1}},\underbrace{\left\{ 3\right\} }_{C_{2}}\}$.
The resulting graph contraction sequence is presented in Figure \ref{fig:Contraction-sequence}.
\begin{center}
\begin{figure}
\centering{}\includegraphics[width=4in]{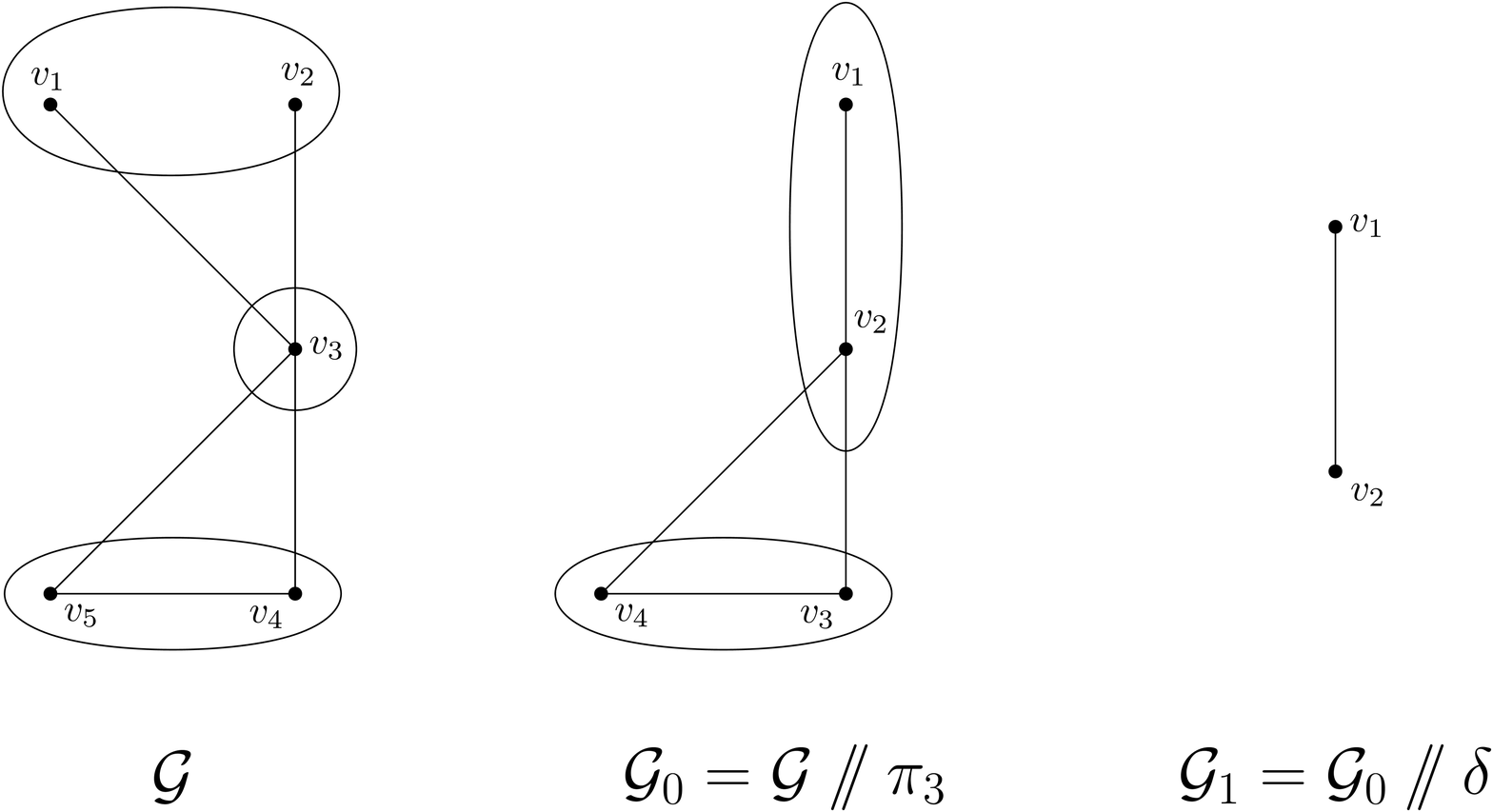}
\caption{The graph contraction sequence leading to $\G\sslash\pi_{2}$: first the contraction $\G_{0}=\G\sslash\pi_{3}$ is performed followed by the contraction over the coarsing $\G_{1}=\G_{0}\sslash\delta\left(\pi_{3},\pi_{2}\right)$. \label{fig:Contraction-sequence} }
\end{figure}
\par\end{center}

\subsection{Edge Contractions}

Graph contractions are defined over vertex partitions. However, there
is also an edge-based approach to perform graph contractions.

\begin{Definition}[edge contraction partition] \label{edge contraction partition}
Consider a graph $\G$ and an \textit{edge contraction set} $\mathcal{E}_{cs}\subset\mathcal{E}(\mathcal{G})$
with $|\mathcal{E}_{cs}|=n-r$ \textit{. }Then we define the \emph{edge
contraction partition} $\pi_{c}\left(\G,\E_{cs}\right)$ as the connected
components partition of the graph $\G_{c}\left(\G,\E_{cs}\right)=\left(\V\left(\G\right),\E_{cs}\right)$,
i.e., $\pi_{c}\left(\G,\E_{cs}\right)=\pi_{cc}\left(\G_{c}\left(\G,\E_{cs}\right)\right)$.
The set of all \textit{edge contraction sets} of cardinality $p$
is defined as $\Xi_{p}\left(\G\right)\triangleq\left\{ \E_{cs}\subset\E\left(\G\right)|\left|\E_{cs}\right|=p\right\} $.
\end{Definition}

With the {edge contraction partition} definition we can define an
edge-based graph contraction.

\begin{Definition}[edge-based graph contraction]\label{edge contraction}
Consider a graph $\G$ and an edge contraction set $\E_{cs}\in\Xi_{n-r}\left(\G\right)$
for $r<n$. Then the \textit{edge-based contraction} is defined as
the contraction over the \emph{edge contraction partition}, i.e.,
$\G\sslash\E_{cs}=\G\sslash\pi_{c}\left(\G,\E_{cs}\right)$.\end{Definition}

In this work we find that a class of edge-matching contractions has
interlacing properties.

\begin{Definition}[edge-matching contraction]\label{edge-matching contraction}
Consider a graph $\G$ and an edge contraction set $\E_{cs}\in\Xi_{n-r}\left(\G\right)$
for $r<n$. Then $\G\sslash\E_{cs}$ is an \textit{edge-matching contraction}
if there is one-to-one correspondence between $\E\left(\G\right)\backslash\E_{cs}$
and $\E\left(\G\sslash\E_{cs}\right)$.\end{Definition} A graph contraction
cannot create new edges, therefore, edge-matching (Definition \ref{edge-matching contraction})
is equivalent to $\left|\E\left(\G\right)\backslash\E_{cs}\right|=\left|\E\left(\G\sslash\E_{cs}\right)\right|$.

\begin{prop}\label{edge-matching cycle invariant}Consider a graph
$\G$ and an edge contraction set $\E_{cs}\in\Xi_{n-r}\left(\G\right)$.
Then if $\G\sslash\E_{cs}$ is cycle-invariant (Definition \ref{cycle-invariat contraction})
it is also edge-matching (Definition \ref{edge-matching contraction}).\end{prop}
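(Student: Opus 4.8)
The plan is to prove the contrapositive: if $\G\sslash\E_{cs}$ is \emph{not} edge-matching, then it is \emph{not} cycle-invariant, by exhibiting a simple cycle of $\G$ whose image under the contraction map fails to be a simple cycle. Write $\pi = \pi_c\left(\G,\E_{cs}\right)$ for the edge contraction partition, so that each cell $C_i\left(\pi\right)$ is a connected component of $\G_c\left(\G,\E_{cs}\right) = \left(\V\left(\G\right),\E_{cs}\right)$; in particular any two vertices lying in a common cell are joined by a path all of whose edges belong to $\E_{cs}$, and every edge of $\E_{cs}$ is intra-cell.

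First I would reduce edge-matching to a structural dichotomy. Since a graph contraction creates no new edges, every edge of $\G\sslash\E_{cs}$ is the $f_\pi$-image of some inter-cell edge of $\E\left(\G\right)\backslash\E_{cs}$, so the induced map from the inter-cell non-contracted edges onto $\E\left(\G\sslash\E_{cs}\right)$ is surjective. As edge-matching is equivalent to $\left|\E\left(\G\right)\backslash\E_{cs}\right| = \left|\E\left(\G\sslash\E_{cs}\right)\right|$, the contraction fails to be edge-matching precisely when either (a) some edge $e \in \E\left(\G\right)\backslash\E_{cs}$ has both endpoints in a single cell, or (b) two distinct inter-cell edges $e_1,e_2 \in \E\left(\G\right)\backslash\E_{cs}$ are identified to the same edge of $\G\sslash\E_{cs}$.

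Next I would treat each case by building a simple cycle of $\G$ that collapses under $f_\pi$. In case (a), write $e = \left\{u,v\right\}$ with $u,v \in C_i\left(\pi\right)$; since $e \notin \E_{cs}$, adjoining $e$ to a simple $\E_{cs}$-path from $u$ to $v$ inside $C_i$ yields a simple cycle $Z$ of $\G$ all of whose vertices lie in $C_i$, so $f_\pi\left(Z\right)$ is a closed walk at the single vertex $i$ and is not a simple cycle. In case (b), write $e_1 = \left\{u_1,v_1\right\}$ and $e_2 = \left\{u_2,v_2\right\}$ with $u_1,u_2 \in C_i\left(\pi\right)$ and $v_1,v_2 \in C_j\left(\pi\right)$ for $i \neq j$; concatenating $e_1$, a simple $\E_{cs}$-path from $v_1$ to $v_2$ in $C_j$, $e_2$, and a simple $\E_{cs}$-path from $u_2$ to $u_1$ in $C_i$ produces a closed walk containing a simple cycle $Z$ of $\G$ that traverses both $e_1$ and $e_2$, while every connecting edge stays inside a single cell; hence $f_\pi\left(Z\right)$ reduces to the back-and-forth traversal $i,j,i$ of the single edge $\left\{i,j\right\}$, which is again not a simple cycle. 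In either case the contraction maps a simple cycle of $\G$ to a closed walk that is not a simple cycle, so the cycle map cannot realize a one-to-one correspondence of the simple-cycle sets, and the contraction is not cycle-invariant.

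The step I expect to require the most care is the passage from the constructed closed walks to genuine \emph{simple} cycles together with the verification that their contracted images are genuinely \emph{non-simple}: I must argue that in case (a) the closing edge $e$ is not an edge of the chosen $\E_{cs}$-path (guaranteed since $e \notin \E_{cs}$), that in case (b) both $e_1$ and $e_2$ survive in the extracted simple cycle (using $e_1 \neq e_2$ and simplicity of $\G$ to rule out a shared pair of endpoints), and that the intra-cell connecting paths all collapse under $f_\pi$ so that the image necessarily revisits a vertex. These are precisely the points at which the hypothesis that the cells are the connected components of $\left(\V\left(\G\right),\E_{cs}\right)$ is invoked.
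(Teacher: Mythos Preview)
Your proof is correct and takes essentially the same approach as the paper's: both arguments hinge on the observation that any failure of edge-matching (an intra-cell non-contracted edge, or two inter-cell edges with the same image) yields a simple cycle in $\G$ passing through $\E_{cs}$-edges, which then collapses under the contraction. The paper states this tersely in the direct direction (``otherwise they would have been part of a cycle with an edge in $\E_{cs}$''), while your contrapositive with the explicit case split (a)/(b) and the careful extraction of a simple cycle is a more detailed rendering of the same idea.
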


\begin{proof} If $\G\sslash\E_{cs}$ is cycle-invariant then from
Definition \ref{cycle-invariat contraction} the edges in $\E_{cs}$
are not part of any cycle of $\G$. Therefore, the contraction does
not map any two edges in $\E\left(\G\right)\backslash\E_{cs}$ to
a single edge in $\E\left(\G\sslash\E_{cs}\right)$, otherwise they
would have been part of a cycle with an edge in $\E_{cs}$, and we
obtain that $\left|\E\left(\G\right)\backslash\E_{cs}\right|=\left|\E\left(\G\sslash\E_{cs}\right)\right|$.\end{proof}

\begin{prop}\label{edge-matching node-removal}Consider a graph $\G$
and a node $v\in\V\left(G\right)$, and let $\pi_{cc}\left(\G\backslash v\right)$
be the connected component partition of $\G\backslash v$, then for
$C_{i}\in\pi_{cc}\left(\G\backslash v\right)$ and $\E_{cs}=\E\left(\G\left[C_{i}\cup v\right]\right)$,
the contraction $\G\sslash\E_{cs}$ is node-removal equivalent (Definition
\ref{node-removal equivalent contraction}) with $\V_{S}=C_{i}$,
and is also edge-matching (Definition \ref{edge-matching contraction}).\end{prop}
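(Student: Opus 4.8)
The plan is first to pin down the edge-contraction partition $\pi_c(\G,\E_{cs})$ and then to verify the two claimed properties by directly comparing vertex and edge sets, using that $\G\sslash\E_{cs}=\G\sslash\pi_c(\G,\E_{cs})$ by Definition \ref{edge contraction}. Since $\G$ is connected and $C_i$ is a connected component of $\G\backslash v$, there must be at least one edge joining $v$ to $C_i$; together with the connectedness of $\G[C_i]$ this makes $\G[C_i\cup v]$ connected. As $\E_{cs}=\E(\G[C_i\cup v])$ contains no edge leaving $C_i\cup\{v\}$, the graph $\G_c(\G,\E_{cs})=(\V(\G),\E_{cs})$ has the single nontrivial connected component $C_i\cup\{v\}$ and leaves every other vertex isolated. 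Hence $\pi_c(\G,\E_{cs})$ consists of the one cell $C_i\cup\{v\}$ together with a singleton for each node of $\V(\G)\backslash(C_i\cup\{v\})$.

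The crucial structural fact I would isolate next is that, because $C_i$ is a connected component of $\G\backslash v$, no node of $C_i$ is adjacent to any node outside $C_i\cup\{v\}$. Consequently every edge of $\G$ incident to a vertex of $C_i$ has its other endpoint in $C_i\cup\{v\}$, so the set of edges of $\G$ meeting $C_i$ is exactly $\{$edges within $C_i\}\cup\{$edges between $C_i$ and $v\}=\E(\G[C_i\cup v])=\E_{cs}$. This single identity drives both conclusions.

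For node-removal equivalence I would use the natural identification sending the contracted cell $C_i\cup\{v\}$ to $v$ and each singleton to its own vertex. Under it, $\G\sslash\E_{cs}$ and $\G\backslash C_i$ have the same vertex set $\V(\G)\backslash C_i$ of size $n-|C_i|$. For the edges: deleting $C_i$ removes precisely the edges meeting $C_i$, i.e. exactly $\E_{cs}$, leaving the edges among $\{v\}\cup(\V(\G)\backslash(C_i\cup\{v\}))$; the contraction instead sends the edges of $\E_{cs}$ to self-loops (which are discarded) and maps every edge of $\E(\G)\backslash\E_{cs}$ across the same identification, since such an edge can meet the cell only through $v$. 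The two edge sets therefore coincide, giving $\G\sslash\E_{cs}=\G\backslash C_i$ with $\V_S=C_i$.

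Finally, for edge-matching I would show that $f_{\pi_c}$ restricts to a bijection from $\E(\G)\backslash\E_{cs}$ onto $\E(\G\sslash\E_{cs})$. Surjectivity is immediate, as every edge of the contraction arises from a non-contracted edge of $\G$. For injectivity, the structural fact guarantees that a remaining edge touches the contracted cell only at $v$, so two distinct remaining edges could be identified only if they already shared the endpoint $v$ and had the same second endpoint, which is impossible in the simple graph $\G$. Thus $|\E(\G)\backslash\E_{cs}|=|\E(\G\sslash\E_{cs})|$, which by the remark following Definition \ref{edge-matching contraction} is exactly edge-matching. I expect the main obstacle to be this no-collapse step together with phrasing the node-removal ``equality'' precisely through the relabeling, as both hinge on the connected-component property ruling out any edge from $C_i$ to the rest of the graph.
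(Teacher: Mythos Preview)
Your proof is correct and follows essentially the same line as the paper's own argument: both hinge on the observation that, since $C_i$ is a connected component of $\G\backslash v$, every edge of $\G$ incident to $C_i$ has its other endpoint in $C_i\cup\{v\}$, so that the edges touched by contracting $\E_{cs}=\E(\G[C_i\cup v])$ are exactly the edges removed by deleting $C_i$, and the remaining edges neither collapse nor change. Your write-up is considerably more careful---in particular, you explicitly identify $\pi_c(\G,\E_{cs})$ as the single non-singleton cell $C_i\cup\{v\}$ plus singletons, and you spell out the injectivity argument for edge-matching---whereas the paper compresses both points into a sentence each; but the underlying structural fact and the two deductions from it are the same.
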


\begin{proof} Since $C_{i}$ is a connected component of $\G\backslash\E\left(\G\left[\N_{v}\cup v\right]\right)$
then $v$ is the only node in any path between $C_{i}$ and $\V\left(\G\right)\backslash\left\{ C_{i}\cup v\right\} $,
therefore, by choosing $\V_{S}=C_{i}$ the graph $\G\backslash C_{i}$
removes all edges $\E\left(\G\left[C_{i}\right]\right)$ and all edges
connecting $C_{i}$ to $\V\left(\G\right)\backslash C_{i}$ which
are the edges between $C_{i}$ and $v$ and we obtain that $\G\backslash C_{i}=\G\sslash\E\left(\G\left[C_{i}\cup v\right]\right)$,
i.e., the contraction $\G\sslash\E_{cs}$ is node-removal equivalent
(Definition \ref{node-removal equivalent contraction}). Furthermore,
contracting all edges $\E\left(\G\left[C_{i}\cup v\right]\right)$
does not effect any other edges in $\G$ such that $\left|\E\left(\G\right)\backslash\E_{cs}\right|=\left|\E\left(\G\sslash\E_{cs}\right)\right|$
and we obtain that $\G\sslash\E_{cs}$ is edge-matching.\end{proof}

We can choose a subset of tree edges to create a tree-based contraction
of a graph.

\begin{Definition}[tree-based contraction] \label{tree-based contraction}Consider
a graph $\G$ and its spanning tree $\T\in\Tree$ with an edge contraction
set $\E_{cs}\in\Xi_{n-r}\left(\T\right)$. Then $\G\sslash\E_{cs}$
is a \textit{tree-based contraction}. \end{Definition}

For example, the graph contraction $\gcont$ presented in Figure \ref{fig:Cycle-invariant-contraction}
can also be performed as an edge-based contraction $\G\sslash\E_{cs}$
with $\E_{cs}=\left\{ \left\{ v_{1},v_{3}\right\} ,\left\{ v_{2},v_{3}\right\} \right\} $
and a tree-based contraction (Definition \ref{tree-based contraction}).

If the contraction edge set is a subset of the edges of a spanning
tree, then the contracted tree edges will form a spanning tree of
the contracted graph.

\begin{prop}\label{tree2tree edge-based contraction}Consider a graph
$\G$ and its spanning tree $\T\in\Tree$ with an edge contraction
set $\E_{cs}\in\Xi_{n-r}\left(\T\right)$. Then $\T\sslash\E_{cs}\in\mathbb{T}\left(\G\sslash\E_{cs}\right)$,
i.e., $\T\sslash\E_{cs}$ is a tree of order $r$ of the contracted
graph.\end{prop}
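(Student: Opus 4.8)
The plan is to show that $\T\sslash\E_{cs}$ satisfies the three defining features of a spanning tree of $\G\sslash\E_{cs}$: it is a subgraph on the full contracted vertex set, it is connected, and it has exactly $r-1$ edges. First I would observe that, because $\T$ is spanning, $\V\left(\T\right)=\V\left(\G\right)$, so the graph $\left(\V,\E_{cs}\right)$ used to build the edge contraction partition (Definition \ref{edge contraction partition}) is identical whether we start from $\T$ or from $\G$; hence $\pi_{c}\left(\T,\E_{cs}\right)=\pi_{c}\left(\G,\E_{cs}\right)$ is one and the same $r$-partition $\pi_{c}$, and both $\T\sslash\E_{cs}$ and $\G\sslash\E_{cs}$ are contractions over $\pi_{c}$.

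Next I would establish the subgraph and connectivity claims by invoking the lemmas already proved. Writing $\T=\G\backslash\E_{R}$ with $\E_{R}=\E\left(\G\right)\backslash\E\left(\T\right)$ the co-tree edges, the subgraph contraction lemma (Lemma \ref{subgraph contraction}) gives $\T\sslash\pi_{c}\subseteq\G\sslash\pi_{c}$ with identical vertex set $\left[1,r\right]$, so $\T\sslash\E_{cs}$ is a spanning subgraph of $\G\sslash\E_{cs}$. Connectedness is then immediate from Lemma \ref{If--is connected}, since $\T$ is connected and connectedness is preserved under contraction.

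The main work, and the step I expect to be the obstacle, is the edge count. Since $\T$ has $n-1$ edges and $\left|\E_{cs}\right|=n-r$, it suffices to show the contraction of $\T$ is edge-matching, i.e. that the surviving edges $\E\left(\T\right)\backslash\E_{cs}$ are in one-to-one correspondence with $\E\left(\T\sslash\E_{cs}\right)$. The key structural fact is that $\T$ is acyclic, so no edge of $\E_{cs}$ lies on a cycle of $\T$ --- which is exactly the hypothesis exploited in the proof of Proposition \ref{edge-matching cycle invariant}, where the cycle-invariance assumption is used only through this acyclicity. I would reuse that argument directly: no surviving edge can collapse to a self-loop, since its endpoints lying in a common cell would, together with a path through $\E_{cs}$ inside that cell's subtree, form a cycle of $\T$; and no two distinct surviving edges can be identified in the quotient, since such a coincidence would close a cycle of $\T$ through the two subtrees they join. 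Both contradict the acyclicity of $\T$, so the correspondence is a bijection and $\left|\E\left(\T\sslash\E_{cs}\right)\right|=\left(n-1\right)-\left(n-r\right)=r-1$. A connected spanning subgraph of $\G\sslash\E_{cs}$ on $r$ vertices with $r-1$ edges is a spanning tree, giving $\T\sslash\E_{cs}\in\mathbb{T}\left(\G\sslash\E_{cs}\right)$.
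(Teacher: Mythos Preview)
Your proposal is correct and follows essentially the same route as the paper: establish $\pi_{c}\left(\T,\E_{cs}\right)=\pi_{c}\left(\G,\E_{cs}\right)$, invoke Lemma~\ref{subgraph contraction} for the subgraph inclusion, Lemma~\ref{If--is connected} for connectedness, and then count edges. Your treatment of the edge count is in fact more careful than the paper's, which simply asserts that contracting $n-r$ tree edges leaves $r-1$ edges; your explicit acyclicity argument (that no surviving edge becomes a self-loop and no two surviving edges are identified) supplies the justification the paper omits and is precisely the content later recorded as Corollary~\ref{tree is edge-matching}.
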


\begin{proof} A tree of order $n$ has $n-1$ edges, and by contracting
$n-r$ tree edges we are left with $\left(n-1\right)-\left(n-r\right)$
edges, such that $\left|\E\left(\T\sslash\E_{cs}\right)\right|=r-1$.
It is left to show that $\T\sslash\E_{cs}\left(\T\right)\subseteq\G\sslash\E_{cs}\left(\T\right)$.
From Lemma \ref{If--is connected} we obtain that $\T\sslash\E_{cs}$
is connected, therefore, $\T\sslash\E_{cs}$ is a connected graph
of order $r$ with $r-1$ edges, which is a tree of order $r$. Since
$\E_{cs}\left(\T\right)\subseteq\E\left(\G\right)$ we have $\pi_{c}\left(\T,\E_{cs}\left(\T\right)\right)=\pi_{c}\left(\G,\E_{cs}\left(\T\right)\right)$,
and since $\T=\G\backslash\E\left(\C\right)$ we obtain from the subgraph
contraction lemma (Lemma \ref{subgraph contraction}) that $\T\sslash\pi_{c}\left(\T,\E_{cs}\left(\T\right)\right)\subseteq\G\sslash\pi_{c}\left(\T,\E_{cs}\left(\T\right)\right)$
and conclude that $\T\sslash\E_{cs}\left(\T\right)\subseteq\G\sslash\E_{cs}\left(\T\right)$,
and therefore, $\T\sslash\E_{cs}\left(\T\right)\in\mathbb{T}\left(\G\sslash\E_{cs}\right)$.\end{proof}

\begin{prop}\label{degree contraction bound} Consider a graph $\G$
and an edge contraction set $\E_{cs}\in\Xi_{n-r}\left(\G\right)$.
Then $\forall\tilde{v}\in\V\left(\G\sslash\E_{cs}\right)$ 
\begin{equation}
d_{\tilde{v}}\left(\G\sslash\E_{cs}\right)\leq\left(\sum\limits _{v\in C_{\tilde{v}}\left(\pi\right)}d_{v}\left(\G\right)\right)-2\left(\left|C_{\tilde{v}}\left(\pi\right)\right|-1\right),
\end{equation}
where $\pi=\pi_{c}\left(\G,\E_{cs}\right)$.\end{prop}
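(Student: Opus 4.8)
The plan is to reduce the claim to an elementary handshake-type edge count over the cell $C_{\tilde{v}}(\pi)$. First I would invoke the degree-contraction result (Proposition \ref{degree-contraction}), which gives $d_{\tilde{v}}\left(\G\sslash\E_{cs}\right)=\left|f_{\pi}\left(\N_{C_{\tilde{v}}}\left(\G\right)\right)\right|$; the right-hand side is precisely the number of \emph{distinct} cells of $\pi$ adjacent to $C_{\tilde{v}}(\pi)$. The starting observation is that each such adjacent cell is reached by at least one edge of $\G$ having exactly one endpoint inside $C_{\tilde{v}}(\pi)$, so the degree in the contracted graph is bounded above by the number of these boundary-crossing edges.

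Next I would split the vertex-degree sum according to whether an edge stays inside the cell or crosses its boundary. Writing $e_{\mathrm{in}}$ for the number of edges of $\G$ with both endpoints in $C_{\tilde{v}}(\pi)$ and $e_{\mathrm{out}}$ for the number with exactly one endpoint in $C_{\tilde{v}}(\pi)$, each internal edge is counted twice and each boundary edge once, so
\begin{equation*}
\sum_{v\in C_{\tilde{v}}(\pi)} d_{v}(\G) = 2e_{\mathrm{in}} + e_{\mathrm{out}}.
\end{equation*}
Combined with the first step this yields $d_{\tilde{v}}\left(\G\sslash\E_{cs}\right)\leq e_{\mathrm{out}} = \left(\sum_{v\in C_{\tilde{v}}(\pi)} d_{v}(\G)\right) - 2e_{\mathrm{in}}$.

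The remaining ingredient is a lower bound on $e_{\mathrm{in}}$. Here I would use that $\pi=\pi_{c}\left(\G,\E_{cs}\right)$ is the connected-components partition of $\G_{c}\left(\G,\E_{cs}\right)=\left(\V(\G),\E_{cs}\right)$, so $C_{\tilde{v}}(\pi)$ is a connected component of that graph. Hence the edges of $\E_{cs}$ lying inside $C_{\tilde{v}}(\pi)$ form a connected spanning subgraph on its $\left|C_{\tilde{v}}(\pi)\right|$ vertices, and a connected graph on $k$ vertices has at least $k-1$ edges. Since $\E_{cs}\subseteq\E(\G)$, all of these are internal edges of $\G$, giving $e_{\mathrm{in}}\geq\left|C_{\tilde{v}}(\pi)\right|-1$. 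Substituting into the previous display produces exactly the claimed inequality.

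I do not anticipate a genuine obstacle, as the argument is a single handshake count together with the spanning-subgraph edge lower bound. The one point needing care is the upper bound $d_{\tilde{v}}\leq e_{\mathrm{out}}$: one must note that several boundary edges can collapse onto the same adjacent cell, so the count of adjacent cells is \emph{at most} (not equal to) the count of crossing edges, which is why the statement is an inequality rather than an identity. No self-loop subtlety arises, since both endpoints of an internal edge lie in the same cell and such edges contribute only to $e_{\mathrm{in}}$.
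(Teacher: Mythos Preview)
Your proposal is correct and follows essentially the same approach as the paper: invoke Proposition~\ref{degree-contraction}, bound the number of adjacent cells by a handshake count over $C_{\tilde{v}}(\pi)$, and then use connectivity of the cell (from the definition of $\pi_{c}$) to get $e_{\mathrm{in}}\geq\left|C_{\tilde{v}}(\pi)\right|-1$. The only cosmetic difference is that the paper inserts the intermediate inequality $\left|f_{\pi}\left(\N_{C_{\tilde{v}}}\right)\right|\leq\left|\N_{C_{\tilde{v}}}\right|$ before passing to the edge count, whereas you bound $\left|f_{\pi}\left(\N_{C_{\tilde{v}}}\right)\right|$ directly by $e_{\mathrm{out}}$; both routes are valid and yield the same bound.
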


\begin{proof} From Proposition \ref{degree-contraction} we obtain
that $d_{\tilde{v}}\left(\gcont\right)=\left|f_{\pi}\left(\N_{C_{\tilde{v}}}\right)\right|$.
We have $\left|f_{\pi}\left(\N_{C_{\tilde{v}}}\right)\right|\leq\left|\N_{C_{\tilde{v}}}\right|$
and since $C_{\tilde{v}}\left(\pi\right)\in\pi_{c}$ is a connected
component of $\G$ we get 
\begin{align*}
\left|\N_{C_{\tilde{v}}}\right| & \leq\left(\sum\limits _{v\in C_{\tilde{v}}\left(\pi\right)}d_{v}\left(\G\right)\right)-2\left|\E\left(\G\left[C_{\tilde{v}}\left(\pi\right)\right]\right)\right|.
\end{align*}

The number of edges in the cell $\left|\E\left(\G\left[C_{\tilde{v}}\left(\pi\right)\right]\right)\right|$
is at least the number of spanning tree edges, therefore, $\left|\E\left(\G\left[C_{\tilde{v}}\left(\pi\right)\right]\right)\right|\geq\left|C_{\tilde{v}}\left(\pi\right)\right|-1$,
and we obtain that 
\[
d_{\tilde{v}}\left(\G\sslash\E_{cs}\right)\leq\left(\sum\limits _{v\in C_{\tilde{v}}\left(\pi\right)}d_{v}\left(\G\right)\right)-2\left(\left|C_{\tilde{v}}\left(\pi\right)\right|-1\right),
\]
completing the proof.\end{proof}

\begin{coro}\label{cycle-invariant degree contraction} Consider
a graph $\G$ and an edge contraction set $\E_{cs}\in\Xi_{n-r}\left(\G\right)$
for $r<n$. Then if $\G\sslash\E_{cs}$ is cycle-invariant (Definition
\ref{node-removal equivalent contraction}) then $\forall\tilde{v}\in\V\left(\G\sslash\E_{cs}\right)$,
\begin{equation}
d_{\tilde{v}}\left(\G\sslash\E_{cs}\right)=\left(\sum\limits _{v\in C_{\tilde{v}}\left(\pi\right)}d_{v}\left(\G\right)\right)-2\left(\left|C_{\tilde{v}}\left(\pi\right)\right|-1\right),
\end{equation}
where $\pi=\pi_{c}\left(\G,\E_{cs}\right)$.\end{coro}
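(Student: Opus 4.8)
The plan is to avoid a vertex-by-vertex analysis and instead combine the one-sided bound already proved in Proposition \ref{degree contraction bound} with a single global edge count, so that summing over all contracted vertices forces each individual inequality to be tight. The key observation is that the inequality of Proposition \ref{degree contraction bound} holds for every $\tilde v$, so if the totals of the two sides agree then equality must hold term by term.

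First I would record the two ingredients supplied by cycle-invariance. Since $\G\sslash\E_{cs}$ is cycle-invariant, Proposition \ref{edge-matching cycle invariant} shows it is edge-matching, so by Definition \ref{edge-matching contraction}
\[
\left|\E\left(\G\sslash\E_{cs}\right)\right| = \left|\E\left(\G\right)\backslash\E_{cs}\right| = \left|\E\left(\G\right)\right| - \left(n-r\right),
\]
using $\E_{cs}\subseteq\E\left(\G\right)$ and $\left|\E_{cs}\right| = n-r$. Moreover cycle-invariance forces $\E_{cs}$ to be acyclic (a cycle inside $\E_{cs}$ would be a cycle of $\G$ that collapses under contraction, violating Definition \ref{cycle-invariat contraction}), so $\G_c\left(\G,\E_{cs}\right) = \left(\V\left(\G\right),\E_{cs}\right)$ is a forest with $n-(n-r)=r$ components. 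Hence $\pi = \pi_c\left(\G,\E_{cs}\right)$ has exactly $r$ cells and $\sum_{\tilde v}\left(\left|C_{\tilde v}\left(\pi\right)\right|-1\right) = n-r$.

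Next I would sum the bound of Proposition \ref{degree contraction bound} over all $\tilde v\in\V\left(\G\sslash\E_{cs}\right)$. By the degree-sum identity the left-hand side totals $\sum_{\tilde v} d_{\tilde v}\left(\G\sslash\E_{cs}\right) = 2\left|\E\left(\G\sslash\E_{cs}\right)\right|$. On the right, since the cells $\left\{C_{\tilde v}\left(\pi\right)\right\}$ partition $\V\left(\G\right)$, the double sum $\sum_{\tilde v}\sum_{v\in C_{\tilde v}} d_v\left(\G\right)$ collapses to $\sum_{v\in\V\left(\G\right)} d_v\left(\G\right) = 2\left|\E\left(\G\right)\right|$, while the correction term totals $2\left(n-r\right)$ by the cell count above. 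Thus the right-hand side totals $2\left|\E\left(\G\right)\right| - 2\left(n-r\right)$, which by the edge count of the previous step equals $2\left|\E\left(\G\sslash\E_{cs}\right)\right|$, exactly the left-hand total.

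Finally, since each summand on the left is at most the corresponding summand on the right (Proposition \ref{degree contraction bound}) while the two totals coincide, equality must hold term by term, which is precisely the claimed identity. The appeal of this route is that the only real work is the edge-matching count; the main difficulty is entirely sidestepped. That difficulty would surface in a direct argument, where one would instead have to show for each cell that $\G\left[C_{\tilde v}\right]$ is a tree, that every outside neighbor attaches to the cell by a single edge, and that $f_\pi$ is injective on $\N_{C_{\tilde v}}\left(\G\right)$ — each established by exhibiting a simple cycle through an $\E_{cs}$-edge whenever it fails, with several degenerate configurations to verify. The global summation identity makes all of this unnecessary.
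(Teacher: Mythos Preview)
Your proof is correct and takes a genuinely different route from the paper's. The paper argues vertex by vertex: from cycle-invariance it asserts directly that for each cell $C_{\tilde v}$ the map $f_\pi$ is injective on $\N_{C_{\tilde v}}$ and that the induced subgraph $\G[C_{\tilde v}(\pi)]$ is a tree, so that each inequality in the proof of Proposition~\ref{degree contraction bound} becomes an equality and the formula follows via Proposition~\ref{degree-contraction}. You instead extract only two global consequences of cycle-invariance --- edge-matching (giving $|\E(\G\sslash\E_{cs})|=|\E(\G)|-(n-r)$) and acyclicity of $\E_{cs}$ (giving $\sum_{\tilde v}(|C_{\tilde v}|-1)=n-r$) --- and then use the handshake identity on both graphs to match the totals of the two sides of Proposition~\ref{degree contraction bound}, forcing equality term by term. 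The paper's approach yields more local structural information as a byproduct (each cell is a tree, each outside neighbor attaches by a single edge, distinct outside neighbors lie in distinct cells), while your summation argument is cleaner and entirely avoids the cycle-exhibiting case analysis that a careful verification of those local claims would require.
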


\begin{proof} Since $\forall\tilde{v}\in\V\left(\G\sslash\E_{cs}\right)$
$C_{\tilde{v}}\left(\pi\right)$ is a connected component of $\G$,
and $\G\sslash\E_{cs}$ is cycle-invariant then $\left|f_{\pi}\left(\N_{C_{\tilde{v}}}\right)\right|=\left|\N_{C_{\tilde{v}}}\right|$
and $\G\left[C_{\tilde{v}}\left(\pi\right)\right]$ is a tree of order
$\left|C_{\tilde{v}}\left(\pi\right)\right|$, such that from Proposition
\ref{degree-contraction} we obtain that 
\[
d_{\tilde{v}}\left(\G\sslash\E_{cs}\right)=\left(\sum\limits _{v\in C_{\tilde{v}}\left(\pi\right)}d_{v}\left(\G\right)\right)-2\left(\left|C_{\tilde{v}}\left(\pi\right)\right|-1\right).
\]
\end{proof}

\begin{coro}\label{tree is edge-matching}If a graph $\G$ is a tree
then $\G\sslash\E_{cs}$ is edge-matching for any $\E_{cs}\in\Xi_{n-r}\left(\G\right)$.\end{coro}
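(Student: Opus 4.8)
The plan is to reduce the claim to the edge-count characterisation of edge-matching contractions stated immediately after Definition \ref{edge-matching contraction}, namely that $\G\sslash\E_{cs}$ is edge-matching if and only if $\left|\E\left(\G\right)\backslash\E_{cs}\right|=\left|\E\left(\G\sslash\E_{cs}\right)\right|$. Since $\G$ is a tree of order $n$ it has exactly $n-1$ edges, and because $\E_{cs}\in\Xi_{n-r}\left(\G\right)$ has cardinality $n-r$, I immediately obtain $\left|\E\left(\G\right)\backslash\E_{cs}\right|=\left(n-1\right)-\left(n-r\right)=r-1$. It therefore only remains to show that the contracted graph has exactly $r-1$ edges.

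To count the edges of $\G\sslash\E_{cs}$ I would invoke Proposition \ref{tree2tree edge-based contraction}. The key observation is that a tree is its own spanning tree, so taking $\T=\G$ gives $\E_{cs}\in\Xi_{n-r}\left(\G\right)=\Xi_{n-r}\left(\T\right)$, and Proposition \ref{tree2tree edge-based contraction} yields $\T\sslash\E_{cs}=\G\sslash\E_{cs}\in\mathbb{T}\left(\G\sslash\E_{cs}\right)$. In particular $\G\sslash\E_{cs}$ is a tree of order $r$, hence has $r-1$ edges. Combining this with the previous paragraph gives $\left|\E\left(\G\sslash\E_{cs}\right)\right|=r-1=\left|\E\left(\G\right)\backslash\E_{cs}\right|$, which by the edge-count characterisation establishes that $\G\sslash\E_{cs}$ is edge-matching.

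As a sanity check I would note an alternative route through cycle-invariance: a tree contains no simple cycles, and since $\G\sslash\E_{cs}$ is again a tree (by the same application of Proposition \ref{tree2tree edge-based contraction}), it too has no simple cycles; the empty map is then trivially a one-to-one correspondence between the two (empty) cycle sets, so $\G\sslash\E_{cs}$ is cycle-invariant in the sense of Definition \ref{cycle-invariat contraction}, and therefore edge-matching by Proposition \ref{edge-matching cycle invariant}. Both routes deliver the same conclusion.

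I expect no genuine obstacle, as the statement is essentially a direct corollary of Proposition \ref{tree2tree edge-based contraction}. The only point requiring care is the identification $\T=\G$ when $\G$ is a tree, so that $\Xi_{n-r}\left(\G\right)$ coincides with the tree-edge contraction sets $\Xi_{n-r}\left(\T\right)$ to which Proposition \ref{tree2tree edge-based contraction} applies; once that identification is in place, the edge count is immediate and the result follows.
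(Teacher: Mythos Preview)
Your proposal is correct, and your sanity-check route via cycle-invariance and Proposition~\ref{edge-matching cycle invariant} is exactly the paper's two-line proof. Your primary route through the edge-count characterisation and Proposition~\ref{tree2tree edge-based contraction} is an equally valid and slightly more self-contained alternative: it sidesteps the need to check that the contracted graph is itself cycle-free, a fact the paper's proof tacitly uses but which your appeal to Proposition~\ref{tree2tree edge-based contraction} actually establishes.
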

\begin{proof} If $\G$ is a tree then $\G\sslash\E_{cs}$ is cycle-invariant
for any $\E_{cs}\in\Xi_{n-r}\left(\G\right)$ and from Proposition
\ref{edge-matching cycle invariant} we obtain that $\G\sslash\E_{cs}$
is edge-matching.\end{proof}

Trees and cycle-completing edges are the building blocks of any connected
graph, and this tree and co-tree structure is described by the Tucker
representation \citep[p.113]{rockafellar1984network}.

\begin{Definition}[Tucker representation]\label{Tucker_representation}
Consider a graph $\G$ and its spanning tree $\T\in\Tree$ with co-tree
$\C\left(\T\right)$, with arbitrary head and tail assigned to the
end-nodes of each edge in $\E\left(\G\right)$. For each edge $\epsilon_{j}\in\E\left(\C\right)$
there is a path from head to tail in $\T$, and we define a corresponding
\textit{signed path vector} $t_{j}\in\R^{\left|\E\left(\T\right)\right|}$,
$\left[t_{j}\right]_{k}=1$ if $\epsilon_{k}\left(\T\right)$ (with
the assigned head and tail) is along the path, $\left[t_{j}\right]_{k}=-1$
if $\epsilon_{k}\left(\T\right)$ is opposite to the path, and $\left[t_{j}\right]_{k}=0$
otherwise. The \emph{Tucker representation} of the co-tree is then
the matrix $T_{\left(\T,\mathcal{C}\right)}\in\R^{\left|\E\left(\T\right)\right|\times\left|\E\left(\C\right)\right|}$
where the $j$th column of $T_{\left(\T,\mathcal{C}\right)}$ is the
signed path vector $t_{j}\in\R^{\left|\E\left(\T\right)\right|}$
of the corresponding edge $\epsilon_{j}\in\E\left(\C\right)$. \end{Definition}

\begin{prop}\label{find cycle invariant contraction}Consider a graph
$\G$ and an edge contraction set $\E_{cs}\in\Xi_{n-r}\left(\G\right)$
for $r<n$, and let $\T\in\mathbb{T}\left(\G\right)$. Then $\G\sslash\E_{cs}$
is cycle-invariant (Definition \ref{cycle-invariat contraction})
if and only if $\E_{cs}\subseteq\E\left(\T\right)$ and the corresponding
rows of $T_{\left(\T,\mathcal{C}\right)}$ are all zeros.\end{prop}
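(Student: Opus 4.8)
The plan is to route the entire equivalence through one clean fact: an edge of $\G$ lies on no simple cycle (equivalently, is a bridge) if and only if it is an edge of $\T$ whose row in $\TM$ vanishes. Granting this, I would then show that cycle-invariance of $\G\sslash\E_{cs}$ is equivalent to the condition that every edge of $\E_{cs}$ is a bridge, and the proposition follows by combining the two statements.

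First I would prove the bridge characterization. A co-tree edge always lies on its own fundamental cycle, so any edge lying on no cycle must belong to $\E\left(\T\right)$; moreover every bridge is contained in every spanning tree, so in particular $\E_{cs}\subseteq\E\left(\T\right)$ once its edges are known to be bridges. For a tree edge $\epsilon_{k}$, deleting it from $\T$ splits the tree into two vertex sets $\V_{1},\V_{2}$, and $\epsilon_{k}$ is the unique tree edge across this cut. Then $\epsilon_{k}$ is a bridge of $\G$ exactly when no other edge joins $\V_{1}$ and $\V_{2}$; any such joining edge is necessarily a co-tree edge, and a co-tree edge $\epsilon_{j}$ crosses the cut if and only if its fundamental tree-path uses $\epsilon_{k}$, i.e. $\left[\TM\right]_{kj}\neq0$. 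Hence row $k$ of $\TM$ is zero if and only if $\epsilon_{k}$ is a bridge, which is the Tucker-representation reading of Definition \ref{Tucker_representation}.

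For the forward implication I would use that if $\G\sslash\E_{cs}$ is cycle-invariant then, as already argued in the proof of Proposition \ref{edge-matching cycle invariant}, no edge of $\E_{cs}$ can lie on a simple cycle of $\G$; by the characterization each such edge is then an edge of $\T$ with a zero row. For the converse, assume $\E_{cs}\subseteq\E\left(\T\right)$ with all corresponding rows zero, so by the characterization $\E_{cs}$ consists of bridges and $\pi=\pi_{c}\left(\G,\E_{cs}\right)$ collapses the (forest of) bridges into connected cells. I would then build the cycle correspondence explicitly. Any simple cycle $O$ of $\G$ avoids $\E_{cs}$, and a cut argument shows it visits each cell at most once: if it met a cell in two distinct vertices, the cell's tree-path between them together with the part of $O$ joining them would force a bridge onto a cycle, a contradiction. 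Thus the $f_{\pi}$-image of $O$ is a simple cycle of $\G\sslash\E_{cs}$; since contracting bridges identifies no two surviving edges (again a cut argument), the induced edge map $\E\left(\G\right)\backslash\E_{cs}\rightarrow\E\left(\G\sslash\E_{cs}\right)$ is a bijection and the resulting map on simple cycles is injective.

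The step I expect to be the main obstacle is surjectivity, i.e. showing the contraction creates no new cycles. Given a simple cycle $\tilde{O}$ of $\G\sslash\E_{cs}$ I would pull each of its edges back through the edge bijection to an edge of $\G$ and argue that two consecutive pulled-back edges necessarily meet at the same vertex of their shared cell. If instead they met distinct vertices $c_{1},c_{2}$ of a cell, the bridge on the path joining $c_{1}$ and $c_{2}$ inside the cell would separate $\G$ into two sides carrying $c_{1}$ and $c_{2}$ respectively; every walk between the two sides must use that bridge and hence pass through the cell, so in $\G\sslash\E_{cs}$ every walk between the two sides passes through the single contracted vertex $\tilde{c}$. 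But the remainder of $\tilde{O}$ is exactly such a walk avoiding $\tilde{c}$, a contradiction. Consequently the pullback of $\tilde{O}$ is a genuine simple cycle of $\G$ mapping onto $\tilde{O}$, which yields the required one-to-one correspondence and hence cycle-invariance. Here Proposition \ref{tree2tree edge-based contraction} is convenient, since it guarantees that $\T\sslash\E_{cs}$ is a spanning tree of $\G\sslash\E_{cs}$, aligning the two co-tree edge sets so that fundamental cycles can be compared directly.
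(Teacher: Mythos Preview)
Your approach is essentially the paper's: both directions are routed through the single fact that the edges of $\E_{cs}$ are bridges of $\G$, with the Tucker-row criterion read off from the fundamental cycles exactly as you do, and the forward implication drawn from the same observation used in Proposition~\ref{edge-matching cycle invariant}. The only difference is granularity: the paper's converse is a one-line assertion that a bridge-only contraction is cycle-invariant, whereas you actually build the bijection on simple cycles and handle surjectivity explicitly, so your argument strictly refines the paper's rather than diverging from it.
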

\begin{proof}If $\G\sslash\E_{cs}$ is cycle-invariant then from
Definition \ref{cycle-invariat contraction} the edges in $\E_{cs}$
are not part of any cycle of $\G$, therefore, $\E_{cs}\subseteq\E\left(\T\right)$
for any $\T\in\mathbb{T}\left(\G\right)$. If $\epsilon\in\E\left(\T\right)$
is not part of any cycle in $\G$ then form the Tucker representation
(Definition \ref{Tucker_representation}) we get that the corresponding
row of $T_{\left(\T,\mathcal{C}\right)}$ is all zeros.

If $\E_{cs}\subseteq\E\left(\T\right)$ and the corresponding rows
of $T_{\left(\T,\mathcal{C}\right)}$ are all zeros, then the edges
in $\E_{cs}$ are not part of any cycle in $\G$, such that the tree-based
contraction (Definition \ref{tree-based contraction}) $\G\sslash\E_{cs}$
is cycle-invariant.\end{proof}

\section{Interlacing Graph Contractions\label{sec:Interlacing-Graph-Contractions}}

The general interlacing graph reduction problem (Problem \ref{interlacing graph reduction})
is combinatorial hard. If we restrict the class of reduced-order graphs
to graph contractions then we get the following interlacing graph
contraction problem.

\begin{prob}[interlacing graph contraction]\label{interlacing graph contraction}Consider
a graph $\G$ and a real symmetric graph matrix $M\left(\G\right)\in\R^{n\times n}$.
Then given $r<n$ find $\pi\in\Pi_{r}\left(\G\right)$ such that $\gcont\propto_{M}\G$.\end{prob}

The number of $r$-\textit{partitions} is $\left|\Pi_{r}\left(\G\right)\right|=S\left(n,r\right)$
where 
\begin{equation*}
S\left(n,r\right)=\sum\limits _{k=1}^{r}\left(-1\right)^{r-k}\frac{k^{n}}{k!\left(r-k\right)!},
\end{equation*}
is the Stirling number of the second kind \citep[p.18]{wilf1994generatingfunctionology},
which for $r\ll n$ is asymptotically $S\left(n,r\right)\sim\frac{r^{n}}{r!}$.
If we restrict the problem to edge-based contractions then the number
of partitions is the number of \textit{$n-r$ edge contractions} is
$\left|\Xi_{n-r}\left(\G\right)\right|=\left(\begin{array}{c}
m\\
n-r
\end{array}\right)$ where $m=\left|\E\left(\G\right)\right|$. Finding an interlacing
contraction is, therefore, combinatorial hard as well and in the following
section we show how cycle-invariant and node-removal equivalent contractions
have associated subspaces required by Theorem \ref{interlacing graph reduction theorem}
and lead to interlacing graphs.

Consider a graph $\graph$ of order $n$ and an $r$-partition $\pi\in\Pi_{r}\left(\G\right)$
and consider a subset $\V_{S}\subset\V\left(\G\right)$, $\left|\V_{S}\right|=n-r$
for $r<n$. Then we define the following subspaces of dimension $r$.
The\textit{ partition subspace} $\mathcal{F}_{\pi}\subseteq\R^{n}$
is the space of all vectors in $\R^{n}$ such that variables with
indexes in the same partition cell are equal, 
\begin{equation}
\mathcal{F}_{\pi}\triangleq\left\{ x\in\R^{n}|x_{j}=x_{k},\forall j,k\in C_{i}\left(\pi\right),\forall i\in\left[1,r\right]\right\} ,\label{eq:partition space}
\end{equation}
and the corresponding \textit{partition mapping} $p_{\mathcal{F}_{\pi}}\left(\tilde{x}\right):\R^{r}\rightarrow\R^{n}$,
 
\begin{equation}
\left[p_{\mathcal{F}_{\pi}}\left(\tilde{x}\right)\right]_{k}=\left\{ \tilde{x}_{i}|k\in C_{i}\left(\pi\right)\right\} .\label{eq:partition lifting}
\end{equation}

We define the \textit{anti-partition subspace} $\tilde{\mathcal{F}}_{\pi}\subseteq\R^{n}$
such that for $x\in\tilde{\mathcal{F}}_{\pi}$ the sum of all vector
variables in non-singleton partition cells is zero 
\begin{equation}
\tilde{\mathcal{F}}_{\pi}\triangleq\left\{ x\in\R^{n}|x_{v_{j}\left(C_{i}\left(\pi\right)\right)}=-\frac{x_{v_{1}\left(C_{i}\left(\pi\right)\right)}}{\left|C_{i}\left(\pi\right)\right|-1},\begin{array}{c}
\forall i\in\left[1,r\right],\text{\ensuremath{\left|C_{i}\left(\pi\right)\right|>1}}\\
\forall j\in\left[2,\left|C_{i}\left(\pi\right)\right|\right]
\end{array}\right\} ,\label{eq:anti-partition subspace}
\end{equation}
 and the corresponding \textit{anti-partition mapping}, $p_{\tilde{\mathcal{F}}_{\pi}}\left(\tilde{x}\right):\R^{r}\rightarrow\R^{n}$,
\begin{equation}
\left[p_{\tilde{\mathcal{F}}_{\pi}}\left(\tilde{x}\right)\right]_{k}=\begin{cases}
\tilde{x}_{k} & k=v_{1}\left(C_{i}\left(\pi\right)\right)\\
-\frac{\tilde{x}_{k}}{\left|C_{i}\left(\pi\right)\right|-1} & k=v_{j}\left(C_{i}\left(\pi\right)\right),j\geq2
\end{cases},\label{eq:anti-partition lifting}
\end{equation}
where $v_{j}\left(C_{i}\left(\pi\right)\right)$ denotes the j'th
node of the i'th partition cell.

The \textit{node-removal subspace}, $\mathcal{F}_{\V_{S}}\subseteq\R^{n}$,
is defined as 
\begin{equation}
\mathcal{F}_{\V_{S}}\triangleq\left\{ x\in\R^{n}|x_{i}=0,i\in\V_{S}\right\} ,\label{eq:node-removal space}
\end{equation}
and the corresponding \textit{node-removal mapping} $p_{\mathcal{F}_{\V_{S}}}\left(\tilde{x}\right):\R^{r}\rightarrow\R^{n}$,
\begin{equation}
\left[p_{\mathcal{F}_{\V_{S}}}\left(\tilde{x}\right)\right]_{k}=\begin{cases}
\tilde{x}_{k} & k\notin\V_{S}\\
0 & o.w.
\end{cases}.\label{eq:node-removal lifting}
\end{equation}

\begin{prop}\label{Rayleigh quotient edge-matching and node-removal equivalent contraction  inequalities}Consider
a graph $\G$ and an edge-matching and node-removal equivalent contraction
$\G\sslash\E_{cs}$ (Definitions \ref{edge-matching contraction}\&\ref{edge contraction partition})
with $\E_{cs}\in\Xi_{n-r}\left(\G\right)$ for $r<n$. Then for $\tilde{x}\in\R^{r}$
we have 
\begin{equation*}
R\left(L\left(\G\right),p_{\mathcal{F}_{\pi}}\left(\tilde{x}\right)\right)\leq R\left(L\left(\G\sslash\E_{cs}\right),\tilde{x}\right),
\end{equation*}
and 
\begin{equation*}
R\left(L\left(\G\right),p_{\mathcal{F}_{\V_{S}}}\left(\tilde{x}\right)\right)\geq R\left(L\left(\G\sslash\E_{cs}\right),\tilde{x}\right).
\end{equation*}

\end{prop}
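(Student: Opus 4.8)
The target is the two Rayleigh-quotient inequalities (these are precisely what Theorem \ref{interlacing graph reduction theorem} will later consume, with $\mathcal{A}=\mathcal{F}_{\pi}$ and $\mathcal{B}=\mathcal{F}_{\V_{S}}$, where $\pi=\pi_{c}(\G,\E_{cs})$). I would prove both directly from the edge-sum form of the Laplacian quadratic form, $x^{T}L(\G)x=\sum_{\{u,v\}\in\E(\G)}(x_{u}-x_{v})^{2}$, which turns every estimate into bookkeeping over edges. Before the two computations I would extract two structural facts from the hypotheses. First, from edge-matching (Definition \ref{edge-matching contraction}) the contraction map $\epsilon\mapsto\{f_{\pi}(h_{\E}(\epsilon)),f_{\pi}(t_{\E}(\epsilon))\}$ restricts to a bijection from $\E(\G)\backslash\E_{cs}$ onto $\E(\G\sslash\E_{cs})$; since the simple graph $\E(\G\sslash\E_{cs})$ has no self-loops, every edge of $\E(\G)\backslash\E_{cs}$ must join two distinct cells, so all within-cell edges lie in $\E_{cs}$. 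Second, from node-removal equivalence (Definition \ref{node-removal equivalent contraction}), where $\G\sslash\E_{cs}=\G\backslash\V_{S}$, each cell $C_{i}(\pi)$ contains exactly one retained vertex $w_{i}\notin\V_{S}$, giving a bijection between the $r$ cells and the $r$ vertices of $\V(\G)\backslash\V_{S}$. For this second fact I would argue that a cell lying entirely inside $\V_{S}$ would, by connectedness of $\G$, emit a between-cell edge and hence produce a contracted vertex with no counterpart in $\G\backslash\V_{S}$; counting $r$ cells against $r$ retained vertices then forces exactly one $w_{i}$ per cell.

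For the first inequality I would set $y=p_{\mathcal{F}_{\pi}}(\tilde{x})$, so that $y_{k}=\tilde{x}_{f_{\pi}(k)}$. In the quadratic form the $\E_{cs}$-edges sit inside a single cell and contribute nothing, while the first structural fact identifies the surviving terms bijectively with a sum over $\E(\G\sslash\E_{cs})$; hence the numerators coincide exactly, $y^{T}L(\G)y=\tilde{x}^{T}L(\G\sslash\E_{cs})\tilde{x}$. For the denominators, $y^{T}y=\sum_{i=1}^{r}|C_{i}(\pi)|\tilde{x}_{i}^{2}\geq\sum_{i=1}^{r}\tilde{x}_{i}^{2}=\tilde{x}^{T}\tilde{x}$ because each $|C_{i}(\pi)|\geq1$. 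Since the common numerator is nonnegative (a sum of squares) and the left-hand denominator is the larger one, dividing gives $R(L(\G),y)\leq R(L(\G\sslash\E_{cs}),\tilde{x})$.

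For the second inequality I would set $z=p_{\mathcal{F}_{\V_{S}}}(\tilde{x})$, so that $z_{w_{i}}=\tilde{x}_{i}$ (using the bijection $w_{i}\leftrightarrow i$) and $z_{k}=0$ for $k\in\V_{S}$. I would split the edge sum into three parts: edges with both endpoints in $\V_{S}$ contribute $0$; edges with both endpoints retained are exactly the edges of $\G\backslash\V_{S}=\G\sslash\E_{cs}$ and reproduce $\tilde{x}^{T}L(\G\sslash\E_{cs})\tilde{x}$; and each edge crossing from $\V_{S}$ to a retained vertex $w_{j}$ contributes $\tilde{x}_{j}^{2}\geq0$. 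Therefore $z^{T}L(\G)z\geq\tilde{x}^{T}L(\G\sslash\E_{cs})\tilde{x}$. The denominators are now equal, $z^{T}z=\sum_{i=1}^{r}\tilde{x}_{i}^{2}=\tilde{x}^{T}\tilde{x}$, so dividing yields $R(L(\G),z)\geq R(L(\G\sslash\E_{cs}),\tilde{x})$.

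The per-edge bookkeeping is routine once the quadratic form is in hand; the step demanding the most care, and the main obstacle, is the second structural fact, namely that node-removal equivalence forces a one-to-one correspondence between cells and retained vertices with every deleted vertex buried inside its own cell. This is exactly what makes the node-removal map's denominator equal $\tilde{x}^{T}\tilde{x}$ and what lets the crossing edges be the sole, nonnegative source of the inequality. Without it, neither the indexing that turns $p_{\mathcal{F}_{\V_{S}}}(\tilde{x})$ into a function of $\tilde{x}\in\R^{r}$ nor the clean three-way split of the edge sum would be justified, so I would make certain to establish it rigorously before beginning the Rayleigh-quotient estimates.
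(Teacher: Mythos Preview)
Your proposal is correct and follows essentially the same route as the paper: both inequalities are read off from the edge-sum form $x^{T}L(\G)x=\sum_{\{u,v\}\in\E(\G)}(x_{u}-x_{v})^{2}$, the first via equal numerators and an enlarged denominator ($|C_{i}(\pi)|\ge 1$), the second via a nonnegatively enlarged numerator with equal denominators. For the second inequality the paper splits off the $\E_{cs}$-terms rather than the $\V_{S}$-incident edges, but under the combined edge-matching and node-removal-equivalent hypotheses these two edge sets coincide (your two structural facts force every edge touching $\V_{S}$ to be within-cell and every edge of $\E(\G)\backslash\E_{cs}$ to have both endpoints retained), so the computations are the same; your version simply makes explicit the cell--retained-vertex bijection that the paper uses tacitly when it identifies $\tilde{x}$-indices across $\G\sslash\E_{cs}$ and $\G\backslash\V_{S}$.
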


\begin{proof}Let $x=p_{\mathcal{F}_{\pi}}\left(\tilde{x}\right)$
for $\tilde{x}\in\R^{r}$. The Rayleigh quotients of the Laplacian
takes the form \cite{chen2004interlacing} 
\begin{equation*}
R\left(L\left(\G\right),x\right)=\frac{\sum\limits _{\left\{ u,v\right\} \in\E\left(\G\right)}\left(x_{v}-x_{u}\right)^{2}}{\sum\limits _{v\in\V\left(\G\right)}x_{v}^{2}}.\label{eq:Rayleigh quotient of the Laplacian-1}
\end{equation*}

Separating the edges to $\E_{cs}$ and $\E\backslash\E_{cs}$, the
sum $\sum\limits _{\left\{ u,v\right\} \in\E\left(\G\right)}\left(x_{v}-x_{u}\right)^{2}$
can be written as 
\begin{equation*}
\sum\limits _{\left\{ u,v\right\} \in\E\left(\G\right)}\left(x_{v}-x_{u}\right)^{2}=\sum\limits _{\left\{ u,v\right\} \in\E\left(\G\right)\backslash\E_{cs}}\left(x_{u}-x_{v}\right)^{2}+\sum\limits _{\left\{ u,v\right\} \in\E_{cs}}\left(x_{u}-x_{v}\right)^{2}.\label{eq:Seperating the edges-1}
\end{equation*}

Therefore, if $x\in\mathcal{F}_{\pi}$ and $\left\{ u,v\right\} \in\E_{cs}$
then $\sum\limits _{\left\{ u,v\right\} \in\E_{cs}}\left(x_{u}-x_{v}\right)^{2}=0$
and 
\begin{equation*}
\sum\limits _{\left\{ u,v\right\} \in\E}\left(x_{v}-x_{u}\right)^{2}=\sum\limits _{\left\{ u,v\right\} \in\E\backslash\E_{cs}}\left(x_{u}-x_{v}\right)^{2}.
\end{equation*}

Since $\G\sslash\E_{cs}$ is edge-matching (Definition \ref{edge-matching contraction})
there is one-to-one correspondence between $\E\left(\G\right)\backslash\E_{cs}$
and $\E\left(\G\sslash\E_{cs}\right)$ (Proposition \ref{edge-matching cycle invariant}),
and substituting the partition lifting $x=p_{\mathcal{F}_{\pi}}\left(\tilde{x}\right)$
(Eq. \eqref{eq:partition lifting}) we get 
\begin{equation*}
\sum\limits _{\left\{ u,v\right\} \in\E\backslash\E_{cs}}\left(x_{u}-x_{v}\right)^{2}=\sum\limits _{\left\{ u,v\right\} \in\E\left(\G\sslash\E_{cs}\right)}\left(\tilde{x}_{u}-\tilde{x}_{v}\right)^{2}.\label{eq:edge-sum equality-1}
\end{equation*}

Rearranging the sums $\sum\limits _{v\in\V}x_{v}^{2}$ over the vertices
of each partition cell and substituting the partition lifting $x=p_{\mathcal{F}_{\pi}}\left(\tilde{x}\right)$
(Eq. \eqref{eq:partition lifting}) we get, 
\begin{align*}
\sum\limits _{v\in\V\left(\G\right)}x_{v}^{2} & =\sum_{i=1}^{r}\sum\limits _{v\in C_{i}\left(\pi\right)}x_{v}^{2}\nonumber \\
 & =\sum\limits _{u\in\V\left(\G\sslash\E_{cs}\right)}\tilde{x}_{u}^{2}\left|C_{u}\left(\pi\right)\right|,
\end{align*}

The Rayleigh quotients of the Laplacian is then 
\begin{equation*}
R\left(L\left(\G\right),p_{\mathcal{F}_{\pi}}\left(\tilde{x}\right)\right)=\frac{\sum\limits _{\left\{ u,v\right\} \in\E\left(\G\sslash\E_{cs}\right)}\left(\tilde{x}_{u}-\tilde{x}_{v}\right)^{2}}{\sum\limits _{u\in\V\left(\G\sslash\E_{cs}\right)}\tilde{x}_{u}^{2}\left|C_{u}\left(\pi\right)\right|},
\end{equation*}
and we have $\left|C_{i}\left(\pi\right)\right|\geq1$, therefore,
\begin{align*}
R\left(L\left(\G\right),p_{\mathcal{F}_{\pi}}\left(\tilde{x}\right)\right) & \leq\frac{\sum\limits _{\left\{ u,v\right\} \in\E\left(\G\sslash\E_{cs}\right)}\left(\tilde{x}_{u}-\tilde{x}_{v}\right)^{2}}{\sum\limits _{u\in\V\left(\G\sslash\E_{cs}\right)}\tilde{x}_{u}^{2}}\nonumber \\
 & =R\left(L\left(\G\sslash\E_{cs}\right),\tilde{x}\right).
\end{align*}

If $\G\sslash\E_{cs}$ is node-removal equivalent (Definition \ref{node-removal equivalent contraction})
then by substituting the node-removal lifting $x=p_{\mathcal{F}_{\V_{S}}}\left(\tilde{x}\right)$
(Eq. \eqref{eq:partition lifting}) we get 
\[
\sum\limits _{\left\{ u,v\right\} \in\E\backslash\E_{cs}}\left(x_{u}-x_{v}\right)^{2}=\sum\limits _{\left\{ u,v\right\} \in\E\left(\G\sslash\E_{cs}\right)}\left(\tilde{x}_{u}-\tilde{x}_{v}\right)^{2}
\]
and 
\begin{equation*}
\sum\limits _{v\in\V\left(\G\right)}x_{v}^{2}=\sum\limits _{u\in\V\left(\G\sslash\E_{cs}\right)}\tilde{x}_{u}^{2},
\end{equation*}
and we obtain that 
\begin{align*}
R\left(L\left(\G\right),p_{\mathcal{F}_{\V_{S}}}\left(\tilde{x}\right)\right) & \geq\frac{\sum\limits _{\left\{ u,v\right\} \in\E\backslash\E_{cs}}\left(x_{v}-x_{u}\right)^{2}}{\sum\limits _{v\in\V\left(\G\right)}x_{v}^{2}}\nonumber \\
 & =\frac{\sum\limits _{\left\{ u,v\right\} \in\E\left(\G\sslash\E_{cs}\right)}\left(\tilde{x}_{u}-\tilde{x}_{v}\right)^{2}}{\sum\limits _{u\in\V\left(\G\sslash\E_{cs}\right)}\tilde{x}_{u}^{2}}\nonumber \\
 & =R\left(L\left(\G\sslash\E_{cs}\right),\tilde{x}\right).
\end{align*}
\end{proof}

\begin{prop}\label{Rayleigh quotient cycle invariant contraction inequalities}Consider
a graph $\G$ and a cycle invariant contraction $\G\sslash\E_{cs}$
(Definition \ref{cycle-invariat contraction}) with $\E_{cs}\in\Xi_{n-r}\left(\G\right)$
for $r<n$. Then for $\tilde{x}\in\R^{r}$ we have 
\begin{equation*}
R\left(\mathcal{L}\left(\G\right),p_{\mathcal{F}_{\pi}}\left(\tilde{x}\right)\right)\leq R\left(\mathcal{L}\left(\G\sslash\E_{cs}\right),\tilde{x}\right),
\end{equation*}
and if $\G\sslash\E_{cs}$ is a single edge contraction with $\E_{cs}=\varepsilon_{cs}$
then 
\begin{equation*}
R\left(\mathcal{L}\left(\G\right),p_{\tilde{\mathcal{F}}_{\pi}}\left(\tilde{x}\right)\right)\geq R\left(\mathcal{L}\left(\G\sslash\varepsilon_{cs}\right),\tilde{x}\right).
\end{equation*}

\end{prop}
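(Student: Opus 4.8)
The plan is to run the same Courant--Fischer / min--max machinery as in the combinatorial Laplacian case, feeding two Rayleigh-quotient inequalities into Theorem~\ref{interlacing graph reduction theorem}, but now for the inverse-degree-weighted form of the normalized Laplacian. Writing $\mathcal{L}(\G)=D^{-1/2}L(\G)D^{-1/2}$ with $D$ the diagonal degree matrix and substituting $g=D^{-1/2}y$, the Rayleigh quotient becomes
\begin{equation*}
R\left(\mathcal{L}\left(\G\right),y\right)=\frac{\sum_{\left\{ u,v\right\} \in\E\left(\G\right)}\left(g_{u}-g_{v}\right)^{2}}{\sum_{v\in\V\left(\G\right)}d_{v}\left(\G\right)g_{v}^{2}}.
\end{equation*}
This is a degree-weighted version of the combinatorial edge sum, so the partition and anti-partition lifts are naturally read in the rescaled coordinate $g$; equivalently, the lift appropriate to the normalized Laplacian makes $y_{v}/\sqrt{d_{v}(\G)}$ constant on each cell. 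Two facts particular to cycle-invariant contractions will drive the argument: the edge-matching bijection between $\E(\G)\backslash\E_{cs}$ and $\E(\G\sslash\E_{cs})$ (Proposition~\ref{edge-matching cycle invariant}), and the \emph{exact} degree identity $\sum_{v\in C_{\tilde{v}}(\pi)}d_{v}(\G)=d_{\tilde{v}}(\G\sslash\E_{cs})+2(|C_{\tilde{v}}(\pi)|-1)$ obtained from the tree structure of each cell (Corollary~\ref{cycle-invariant degree contraction}).

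For the first inequality ($\le$) I would take $g$ constant on each cell of $\pi$. Then every edge of $\E_{cs}$, being an intra-cell tree edge, contributes $0$ to the numerator, and by the edge-matching bijection the surviving edges reproduce exactly the numerator of $R(\mathcal{L}(\G\sslash\E_{cs}),\tilde{x})$. Grouping the denominator by cells gives $\sum_{v}d_{v}(\G)g_{v}^{2}=\sum_{\tilde{v}}\bigl(\sum_{v\in C_{\tilde{v}}}d_{v}(\G)\bigr)g_{\tilde{v}}^{2}$, and Corollary~\ref{cycle-invariant degree contraction} lets me replace the inner sum by $d_{\tilde{v}}(\G\sslash\E_{cs})+2(|C_{\tilde{v}}|-1)\ge d_{\tilde{v}}(\G\sslash\E_{cs})$. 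Thus the numerators agree while the full denominator dominates the contracted one, giving $R(\mathcal{L}(\G),p_{\mathcal{F}_{\pi}}(\tilde{x}))\le R(\mathcal{L}(\G\sslash\E_{cs}),\tilde{x})$, in exact analogy with the way the factor $|C_{u}(\pi)|\ge1$ produced the combinatorial inequality.

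For the second inequality ($\ge$), restricted to a single edge contraction $\varepsilon_{cs}=\{v_{1},v_{2}\}$, I would use the anti-partition lift $p_{\tilde{\mathcal{F}}_{\pi}}$, again read in the degree-rescaled coordinate, so that on the merged cell it points in the direction complementary to the constant (partition) lift and is the identity on the singleton cells. In contrast to the first inequality the contracted edge now does \emph{not} cancel: it contributes a strictly positive term to the numerator, inflating it relative to the contracted quotient. The remaining work is to verify that, after this inflation and with the single localized degree change $d_{\tilde{v}}(\G\sslash\varepsilon_{cs})=d_{v_{1}}(\G)+d_{v_{2}}(\G)-2$ (Corollary~\ref{cycle-invariant degree contraction}), the edges incident to $\{v_{1},v_{2}\}$ still combine to give $R(\mathcal{L}(\G),p_{\tilde{\mathcal{F}}_{\pi}}(\tilde{x}))\ge R(\mathcal{L}(\G\sslash\varepsilon_{cs}),\tilde{x})$ for every $\tilde{x}$.

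The crux, and the reason the combinatorial argument does not transfer verbatim, is the degree normalization: the $1/\sqrt{d_{v}}$ weights prevent the naive cell-constant and sign-flip lifts from cancelling the $\E_{cs}$ edges, so the lifts must carry the degree rescaling and the surviving-edge weights in $\G$ must be reconciled with those in $\G\sslash\E_{cs}$. Cycle-invariance is precisely what makes this reconciliation exact, through the equality (not merely the inequality) in Corollary~\ref{cycle-invariant degree contraction}; plain edge-matching would not suffice, since a cell carrying a cycle would alter the degree bookkeeping. I expect the second inequality to be the harder half, which is exactly why it is stated only for a single edge contraction: with one merged pair the degree change is confined to a single vertex and the sign pattern of the anti-partition lift stays controllable, whereas a general cell would couple several simultaneous degree changes and break the clean positive-term estimate.
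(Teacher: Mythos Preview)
Your treatment of the first inequality is correct and coincides with the paper's argument: the partition lift kills the $\E_{cs}$ edges in the numerator, the edge-matching bijection (Proposition~\ref{edge-matching cycle invariant}) identifies the surviving numerator with that of $\G\sslash\E_{cs}$, and the degree identity of Corollary~\ref{cycle-invariant degree contraction} shows the denominator picks up a nonnegative surplus $2(|C_u(\pi)|-1)\tilde x_u^{2}$ per cell, yielding $\le$.

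For the second inequality you have the right setup but stop precisely at the decisive step. After substituting the anti-partition lift for the single contracted edge $\varepsilon_{cs}=\{n-1,n\}$, the paper arrives at
\[
R\bigl(\mathcal{L}(\G),p_{\tilde{\mathcal{F}}_\pi}(\tilde x)\bigr)
=\frac{N+4\tilde x_{n-1}^{2}}{D+2\tilde x_{n-1}^{2}},
\]
where $N$ and $D$ are the numerator and denominator of $R(\mathcal{L}(\G\sslash\varepsilon_{cs}),\tilde x)$. The inequality $(N+4a)/(D+2a)\ge N/D$ for $a=\tilde x_{n-1}^{2}\ge 0$ is \emph{equivalent} to $N/D\le 2$, and the paper closes the argument by invoking the universal spectral bound $\lambda_{\max}(\mathcal{L})\le 2$, i.e.\ $R(\mathcal{L}(\G\sslash\varepsilon_{cs}),\tilde x)\le 2$ for every $\tilde x$. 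This is the ingredient missing from your outline: the ``strictly positive term'' you add to the numerator is not enough on its own, because the denominator also grows (by $+2\tilde x_{n-1}^{2}$ from the degree identity); what forces the ratio upward is that the numerator gains exactly twice what the denominator gains \emph{and} the starting ratio is at most $2$. Without this bound the ``remaining work'' you flag cannot be completed, and there is no alternate local estimate on the incident edges that will do the job. This is also why the $\ge$ direction is stated only for a single edge: the clean $+4a$ versus $+2a$ bookkeeping, matched against the constant $2$, does not survive contracting several edges at once.
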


\begin{proof}The Rayleigh quotient of the normalized-Laplacian takes
the form \cite{chen2004interlacing} 
\begin{equation}
R\left(\mathcal{L}\left(\G\right),x\right)=\frac{\sum\limits _{\left\{ u,v\right\} \in\E\left(\G\right)}\left(x_{v}-x_{u}\right)^{2}}{\sum\limits _{v\in\V\left(\G\right)}x_{v}^{2}d_{v}\left(\G\right)}.\label{eq:Rayleigh quotient of the normalized-Laplacian}
\end{equation}

Since $\G\sslash\E_{cs}$ is a cycle-invariant contraction it is edge-matching
(Proposition \ref{edge-matching cycle invariant}) and there is one-to-one
correspondence between $\E\left(\G\right)\backslash\E_{cs}$ and $\E\left(\G\sslash\E_{cs}\right)$
(Proposition \ref{edge-matching cycle invariant}), and substituting
the partition mapping $x=p_{\mathcal{F}_{\pi}}\left(\tilde{x}\right)$
for $\tilde{x}\in\R^{r}$ (Eq. \eqref{eq:partition lifting}) we get
as in Eq. \eqref{eq:edge-sum equality-1} 
\begin{align*}
\sum\limits _{\left\{ u,v\right\} \in\E\left(\G\right)}\left(x_{v}-x_{u}\right)^{2} & =\sum\limits _{\left\{ u,v\right\} \in\E\backslash\E_{cs}}\left(x_{u}-x_{v}\right)^{2}\nonumber \\
 & =\sum\limits _{\left\{ u,v\right\} \in\E\left(\G\sslash\E_{cs}\right)}\left(\tilde{x}_{u}-\tilde{x}_{v}\right)^{2}.
\end{align*}

Rearranging the sum $\sum\limits _{v\in\V\left(\G\right)}x_{v}^{2}d_{v}\left(\G\right)$
over the vertices of each partition cell and substituting the partition
lifting $x=p_{\mathcal{F}_{\pi}}\left(\tilde{x}\right)$ (Eq. \eqref{eq:partition lifting})
we get, 
\begin{align*}
\sum\limits _{v\in\V\left(\G\right)}x_{v}^{2}d_{v}\left(\G\right) & =\sum_{i=1}^{r}\sum\limits _{v\in C_{i}\left(\pi\right)}x_{v}^{2}d_{v}\left(\G\right),\nonumber \\
 & =\sum\limits _{u\in\V\left(\G\sslash\E_{cs}\right)}\tilde{x}_{u}^{2}\left(\sum\limits _{v\in C_{u}\left(\pi\right)}d_{v}\left(\G\right)\right).
\end{align*}

The graph contraction $\G\sslash\E_{cs}$ is cycle-invariant, therefore,
from Proposition~\ref{cycle-invariant degree contraction} we have
$d_{u}\left(\G\sslash\E_{cs}\right)=\left(\sum\limits _{v\in C_{u}\left(\pi\right)}d_{v}\left(\G\right)\right)-2\left(\left|C_{u}\left(\pi\right)\right|-1\right)$,
and 
\begin{equation*}
\sum\limits _{v\in\V\left(\G\right)}x_{v}^{2}d_{v}\left(\G\right)=\sum\limits _{u\in\V\left(\G\sslash\E_{cs}\right)}\tilde{x}_{u}^{2}\left[d_{u}\left(\G\sslash\E_{cs}\right)+2\left(\left|C_{u}\left(\pi\right)\right|-1\right)\right].
\end{equation*}

The Rayleigh quotients of the normalized-Laplacian (Eq. \eqref{eq:Rayleigh quotient of the normalized-Laplacian}) is then 
\begin{equation*}
R\left(\mathcal{L}\left(\G\right),p_{\mathcal{F}_{\pi}}\left(\tilde{x}\right)\right)=\frac{\sum\limits _{\left\{ u,v\right\} \in\E\left(\G\sslash\E_{cs}\right)}\left(\tilde{x}_{u}-\tilde{x}_{v}\right)^{2}}{\sum\limits _{u\in\V\left(\G\sslash\E_{cs}\right)}\tilde{x}_{u}^{2}\left[d_{u}\left(\G\sslash\E_{cs}\right)+2\left(\left|C_{u}\left(\pi\right)\right|-1\right)\right]}.
\end{equation*}

We have $\left|C_{i}\left(\pi\right)\right|\geq1$ such that $2\left(\left|C_{i}\left(\pi\right)\right|-1\right)\geq0$,
therefore, 
\begin{align*}
R\left(\mathcal{L}\left(\G\right),p_{\mathcal{F}_{\pi}}\left(\tilde{x}\right)\right) & \leq\frac{\sum\limits _{\left\{ u,v\right\} \in\E\left(\G\sslash\E_{cs}\right)}\left(\tilde{x}_{u}-\tilde{x}_{v}\right)^{2}}{\sum\limits _{u\in\V\left(\G\sslash\E_{cs}\right)}\tilde{x}_{u}^{2}d_{u}\left(\G\sslash\E_{cs}\right)}=R\left(\mathcal{L}\left(\G\sslash\E_{cs}\right),\tilde{x}\right).
\end{align*}

Let $\G\sslash\varepsilon_{cs}$ be a cycle-invariant edge contraction
with corresponding edge contraction partition $\pi\in\Pi_{n-1}\left(\G\right)$.
For an atom-contraction there is only one non-singlet cell, and without
loss of generality we can choose it to be $C_{n-1}\left(\pi\right)=\left\{ n-1,n\right\} $
such that the contracted edge is $\varepsilon_{cs}=\left\{ x_{n-1},x_{n}\right\} $,
and 
\begin{equation*}
R\left(\mathcal{L}\left(\G\right),x\right)=\frac{\sum\limits _{\left\{ u,v\right\} \in\E\backslash\E_{cs}}\left(x_{u}-x_{v}\right)^{2}+\left(x_{n-1}-x_{n}\right)^{2}}{\sum\limits _{v=1}^{n-2}x_{v}^{2}d_{v}\left(\G\right)+x_{n-1}^{2}d_{n-1}\left(\G\right)+x_{n}^{2}d_{n}\left(\G\right)}.
\end{equation*}

\sloppy For this atom-contraction, we have the anti-partition space $\tilde{\mathcal{F}}_{\pi}=\left\{ x\in\mathbb{R}^{n}\,|x_{n-1}=-x_{n}\right\} $
(Eq. \eqref{eq:anti-partition subspace}) and anti-partition mapping
$p_{\tilde{\mathcal{F}}_{\pi}}\left(\tilde{x}\right):\R^{n-1}\rightarrow\R^{n}$
is (Eq. \eqref{eq:anti-partition lifting}) 
\begin{equation*}
\left[p_{\tilde{\mathcal{F}}_{\pi}}\left(\tilde{x}\right)\right]_{k}=\begin{cases}
\tilde{x}_{k} & k\leq n-1\\
-\tilde{x}_{n-1} & k=n
\end{cases},\label{eq:the partition null-space lifting}
\end{equation*}
such that 
\begin{equation*}
R\left(\mathcal{L}\left(\G\right),p_{\tilde{\mathcal{F}}_{\pi}}\left(\tilde{x}\right)\right)=\frac{\sum\limits _{\left\{ u,v\right\} \in\E\backslash\E_{cs}}\left(\tilde{x}_{u}-\tilde{x}_{v}\right)^{2}+4\tilde{x}_{n-1}^{2}}{\sum\limits _{v=1}^{n-2}\tilde{x}_{v}^{2}d_{v}\left(\G\right)+\tilde{x}_{n-1}^{2}\left(d_{n-1}\left(\G\right)+d_{n}\left(\G\right)\right)}.
\end{equation*}

There is one-to-one correspondence between $\E\left(\G\right)\backslash\varepsilon_{cs}$
and $\E\left(\G\sslash\varepsilon_{cs}\right)$ (Proposition \ref{edge-matching cycle invariant}),
therefore, $\sum\limits _{\left\{ u,v\right\} \in\E\backslash\E_{cs}}\left(\tilde{x}_{u}-\tilde{x}_{v}\right)^{2}=\sum\limits _{\left\{ u,v\right\} \in\E\left(\G\sslash\varepsilon_{cs}\right)}\left(\tilde{x}_{u}-\tilde{x}_{v}\right)^{2}$,
and from Proposition \ref{cycle-invariant degree contraction} we
get 
\begin{equation*}
d_{v}\left(\G\sslash\varepsilon_{cs}\right)=\begin{cases}
d_{v}\left(\G\right) & v\leq n-2\\
d_{n-1}\left(\G\right)+d_{n}\left(\G\right)-2 & v=n-1
\end{cases},
\end{equation*}
such that 
\begin{align*}
R\left(\mathcal{L}\left(\G\right),p_{\tilde{\mathcal{F}}_{\pi}}\left(\tilde{x}\right)\right) & =\frac{\sum\limits _{\left\{ u,v\right\} \in\E\left(\G\sslash\varepsilon_{cs}\right)}\left(\tilde{x}_{u}-\tilde{x}_{v}\right)^{2}+4\tilde{x}_{n-1}^{2}}{\sum\limits _{v\in\V\left(\G\sslash\varepsilon_{cs}\right)}\tilde{x}_{v}^{2}d_{v}\left(\G\sslash\varepsilon_{cs}\right)+2\tilde{x}_{n-1}^{2}}\nonumber \\
 & =R\left(\mathcal{L}\left(\G\sslash\varepsilon_{cs}\right),\tilde{x}\right)\frac{1+\frac{4\tilde{x}_{n-1}^{2}}{\sum\limits _{\left\{ u,v\right\} \in\E\left(\G\sslash\varepsilon_{cs}\right)}\left(\tilde{x}_{u}-\tilde{x}_{v}\right)^{2}}}{1+\frac{2\tilde{x}_{n-1}^{2}}{\sum\limits _{v\in\V\left(\G\sslash\varepsilon_{cs}\right)}\tilde{x}_{v}^{2}d_{v}\left(\G\sslash\varepsilon_{cs}\right)}}.
\end{align*}

For any $\G$ we have $R\left(\mathcal{L}\left(\G\right),x\right)\leq2$
\cite{chung1997spectral}, therefore, 
\begin{equation*}
\sum\limits _{\left\{ u,v\right\} \in\E\left(\G\sslash\varepsilon_{cs}\right)}\left(\tilde{x}_{u}-\tilde{x}_{v}\right)^{2}\leq2\sum\limits _{v\in\V\left(\G\sslash\varepsilon_{cs}\right)}\tilde{x}_{v}^{2}d_{v}\left(\G\sslash\varepsilon_{cs}\right)
\end{equation*}
and 
\begin{equation*}
\frac{1+\frac{4\tilde{x}_{n-1}^{2}}{\sum\limits _{\left\{ u,v\right\} \in\E\left(\G\sslash\varepsilon_{cs}\right)}\left(\tilde{x}_{u}-\tilde{x}_{v}\right)^{2}}}{1+\frac{2\tilde{x}_{n-1}^{2}}{\sum\limits _{v\in\V\left(\G\sslash\varepsilon_{cs}\right)}\tilde{x}_{v}^{2}d_{v}\left(\G\sslash\varepsilon_{cs}\right)}}\geq1,
\end{equation*}
and we obtain that $R\left(\mathcal{L}\left(\G\right),p_{\tilde{\mathcal{F}}_{\pi}}\left(\tilde{x}\right)\right)\geq R\left(\mathcal{L}\left(\G\sslash\varepsilon_{cs}\right),\tilde{x}\right)$
for any cycle invariant single edge contraction.\end{proof}

The only graph contraction interlacing result known to the authors
has been presented by Chen et. al. \citep{chen2004interlacing}:

\begin{theorem}[normalized-Laplacian interlacing disjoint neighborhood contraction]\label{normalized-Laplacian interlacing disjoint neighborhood contraction }
Consider a graph $\G$ and two vetices $u,v\in\V\left(\G\right)$
with corresponding partition $\pi\in\Pi_{n-1}\left(\G\right)$ with
only one non-singlet cell $C_{n-1}\left(\pi\right)=\left\{ u,v\right\} $.
Then if $u,v$ have disjoint neighborhoods, i.e., $\N_{u}\left(\G\right)\cap\left\{ \N_{v}\left(\G\right)\cup v\right\} =\varnothing$, the atom contraction is normalized-Laplacian interlacing, i.e., $\gcont\propto_{\mathcal{L}}\G$.\end{theorem}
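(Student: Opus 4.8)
The plan is to invoke the interlacing graph reduction theorem (Theorem \ref{interlacing graph reduction theorem}) with \emph{both} of its associated subspaces taken to be the partition subspace $\mathcal{F}_{\pi}$ of Eq.~\eqref{eq:partition space}. The essential observation is that the disjoint-neighborhood hypothesis forces the partition lifting to turn the normalized-Laplacian Rayleigh quotient into an exact equality, so a single subspace simultaneously certifies the upper and lower inequalities required by the theorem.

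First I would record the combinatorial consequences of $\N_{u}\left(\G\right)\cap\left\{ \N_{v}\left(\G\right)\cup v\right\} =\varnothing$, namely $u\not\sim v$ and $\N_{u}\left(\G\right)\cap\N_{v}\left(\G\right)=\varnothing$. Writing $\tilde{w}$ for the contracted vertex with cell $C_{\tilde{w}}\left(\pi\right)=\{u,v\}$, the relation $u\not\sim v$ (together with the absence of self-loops) gives $\N_{C_{\tilde{w}}}\left(\G\right)=\N_{u}\left(\G\right)\cup\N_{v}\left(\G\right)$, a disjoint union that meets only singleton cells. Proposition~\ref{degree-contraction} then yields $d_{\tilde{w}}\left(\gcont\right)=\left|f_{\pi}\left(\N_{C_{\tilde{w}}}\right)\right|=\left|\N_{u}\left(\G\right)\right|+\left|\N_{v}\left(\G\right)\right|=d_{u}\left(\G\right)+d_{v}\left(\G\right)$, and the same proposition gives $d_{\tilde{a}}\left(\gcont\right)=d_{a}\left(\G\right)$ for every singleton cell. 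I would also note that, since there is no common neighbor and no edge inside the cell, the contraction induces a one-to-one correspondence between $\E\left(\G\right)$ and $\E\left(\gcont\right)$: no two edges are identified and none collapses to a self-loop. This is the same edge-matching phenomenon as in Proposition~\ref{edge-matching cycle invariant}, argued here directly because the contraction is a vertex-partition contraction rather than an edge-based one (there is no contracted edge, as $u\not\sim v$).

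Next I would evaluate $R\left(\mathcal{L}\left(\G\right),p_{\mathcal{F}_{\pi}}\left(\tilde{x}\right)\right)$ for arbitrary $\tilde{x}\in\R^{n-1}$ using the Rayleigh quotient~\eqref{eq:Rayleigh quotient of the normalized-Laplacian}. Setting $x=p_{\mathcal{F}_{\pi}}\left(\tilde{x}\right)$, so that $x_{u}=x_{v}=\tilde{x}_{\tilde{w}}$ and $x_{a}=\tilde{x}_{a}$ on singletons, the edge correspondence matches the numerator term by term, $\sum_{\{a,b\}\in\E\left(\G\right)}\left(x_{a}-x_{b}\right)^{2}=\sum_{\{a,b\}\in\E\left(\gcont\right)}\left(\tilde{x}_{a}-\tilde{x}_{b}\right)^{2}$, the absence of a $u$--$v$ edge being exactly what prevents a spurious intra-cell term. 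The additive degree relation matches the denominator, $\sum_{a\in\V\left(\G\right)}d_{a}\left(\G\right)x_{a}^{2}=\sum_{a\in\V\left(\gcont\right)}d_{a}\left(\gcont\right)\tilde{x}_{a}^{2}$, since the weight $d_{u}\left(\G\right)+d_{v}\left(\G\right)$ carried by $\tilde{x}_{\tilde{w}}^{2}$ is precisely $d_{\tilde{w}}\left(\gcont\right)$. Hence $R\left(\mathcal{L}\left(\G\right),p_{\mathcal{F}_{\pi}}\left(\tilde{x}\right)\right)=R\left(\mathcal{L}\left(\gcont\right),\tilde{x}\right)$ identically in $\tilde{x}$.

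Finally, an equality is in particular both a $\leq$ and a $\geq$ inequality, so I would apply Theorem~\ref{interlacing graph reduction theorem} with $\mathcal{A}=\mathcal{B}=\mathcal{F}_{\pi}$ to conclude $\gcont\propto_{\mathcal{L}}\G$. I expect the main obstacle to be the degree bookkeeping in the denominator: the equality hinges exactly on $d_{\tilde{w}}\left(\gcont\right)=d_{u}\left(\G\right)+d_{v}\left(\G\right)$, which fails the moment $u$ and $v$ are adjacent or share a neighbor. In the adjacent, cycle-invariant case one only obtains $d_{\tilde{w}}=d_{u}+d_{v}-2$, and the quotient becomes a strict inequality, which is why Proposition~\ref{Rayleigh quotient cycle invariant contraction inequalities} there needs a separate anti-partition subspace to recover the lower bound. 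Confirming that the disjoint case collapses both bounds onto the single subspace $\mathcal{F}_{\pi}$, and that the edgeless-cell contraction still produces the clean edge bijection, are the steps I would verify most carefully.
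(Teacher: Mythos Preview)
Your argument is correct. The disjoint-neighborhood hypothesis indeed forces the partition lifting to make the normalized-Laplacian Rayleigh quotient (in the form of Eq.~\eqref{eq:Rayleigh quotient of the normalized-Laplacian}) an exact equality: the edge bijection you describe gives equality in the numerator, and the degree identity $d_{\tilde{w}}\left(\gcont\right)=d_{u}\left(\G\right)+d_{v}\left(\G\right)$, together with $d_{a}\left(\gcont\right)=d_{a}\left(\G\right)$ on singletons, gives equality in the denominator. Taking $\mathcal{A}=\mathcal{B}=\mathcal{F}_{\pi}$ in Theorem~\ref{interlacing graph reduction theorem} is therefore legitimate and yields the interlacing immediately.

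This is a genuinely different, and cleaner, route than the one the paper sketches. The paper follows the template of Proposition~\ref{Rayleigh quotient cycle invariant contraction inequalities} and invokes two distinct subspaces, $\mathcal{A}=\mathcal{F}_{\pi}$ for the lower inequality and $\mathcal{B}=\tilde{\mathcal{F}}_{\pi}$ (the anti-partition subspace) for the upper inequality, arguing each separately. Your observation that the disjoint-neighborhood condition is exactly what makes the quotient collapse to an equality eliminates the need for the second subspace altogether. The paper's two-subspace argument has the virtue of uniformity with its other proofs (where only a one-sided inequality is available along $\mathcal{F}_{\pi}$, as in the cycle-invariant case you mention), but in this particular situation your single-subspace argument is shorter and makes more transparent why the hypothesis $\N_{u}\cap\left(\N_{v}\cup\{v\}\right)=\varnothing$ is precisely the right one.
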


\begin{proof}The proof is given in \citep{chen2004interlacing} and
is based on a sequence of min-max ineuqalities and the Courant--Fischer
theorem (Theorem \ref{Courant-Fischer}). In the perspective of this
work, Theorem \ref{normalized-Laplacian interlacing disjoint neighborhood contraction }
can be proven based on Theorem \ref{interlacing graph reduction theorem}
as follows: Let $\gcont$ be an atom contraction with $C_{n-1}\left(\pi\right)=\left\{ u,v\right\} $
and $\N_{u}\left(\G\right)\cap\left\{ \N_{v}\left(\G\right)\cup v\right\} =\varnothing$. We notice that if $\N_{u}\left(\G\right)\cap\left\{ \N_{v}\left(\G\right)\cup v\right\} =\varnothing$ then $\gcont$ is edge-matching (Definition \ref{edge-matching contraction}).
Similar to Proposition \ref{Rayleigh quotient cycle invariant contraction inequalities}
it can then be shown that $R\left(\mathcal{L}\left(\G\right),p_{\mathcal{F}_{\pi}}\left(x\right)\right)\leq R\left(\mathcal{L}\left(\gcont\right),x\right)$
and $R\left(\mathcal{L}\left(\G\right),p_{\tilde{\mathcal{F}}_{\pi}}\left(\tilde{x}\right)\right)\geq R\left(L\left(\gcont\right),x\right)$,
therefore, from Theorem \ref{interlacing graph reduction theorem}
with $\mathcal{A}\equiv\mathcal{F}_{\pi}$ and $\mathcal{B}\equiv\tilde{\mathcal{F}}_{\pi}$
we then get that $\gcont\propto_{\mathcal{L}}\G$.
\end{proof}
 
%Without loss of generality we can label the vertices such that $C_{n-1}\left(\pi\right)=\left\{ n-1,n\right\} $.
In this study, based on Theorem \ref{interlacing graph reduction theorem},
we derive two interlacing theorems, Laplacian interlacing for node-removal
equivalent edge-matching contractions (Theorem \ref{node removal contraction laplacian interlacing})
and normalized Laplacian interlacing for cycle-invariant contractions
(Theorem \ref{normalized-Laplacian interlacing cycle-invariant contraction}).

\begin{theorem}[Laplacian interlacing node-removal equivalent contraction]\label{node removal contraction laplacian interlacing}
Consider a graph $\G$ and an edge contraction set $\E_{cs}\in\Xi_{n-r}\left(\G\right)$
for $r<n$. If $\G\sslash\E_{cs}$ is edge-matching (Definition \ref{edge-matching contraction})
and node-removal equivalent (Definition \ref{node-removal equivalent contraction})
then $\G\sslash\E_{cs}\propto_{L}\G$.\end{theorem}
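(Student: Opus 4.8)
The plan is to invoke the interlacing graph reduction theorem (Theorem \ref{interlacing graph reduction theorem}) with $M = L$, $\G_n = \G$, and $\G_r = \G\sslash\E_{cs}$. All that is required is to produce two $r$-dimensional subspaces $\mathcal{A}, \mathcal{B} \subseteq \R^n$ together with the two Rayleigh-quotient inequalities in the theorem's hypothesis; Proposition \ref{Rayleigh quotient edge-matching and node-removal equivalent contraction  inequalities} will supply those inequalities once the subspaces are identified.

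First I would set $\mathcal{A} \equiv \mathcal{F}_\pi$, the partition subspace of Eq.~\eqref{eq:partition space} for the edge contraction partition $\pi = \pi_c(\G, \E_{cs})$ (Definition \ref{edge contraction partition}). Since $\pi$ has exactly $r$ cells, the equalities defining $\mathcal{F}_\pi$ leave one free coordinate per cell, so $\dim \mathcal{F}_\pi = r$ and its injective lift is the partition mapping $p_{\mathcal{F}_\pi}$. Next, because $\G\sslash\E_{cs}$ is node-removal equivalent (Definition \ref{node-removal equivalent contraction}), there is $\V_S \subset \V(\G)$ with $\G\sslash\E_{cs} = \G\backslash\V_S$; as $\G\backslash\V_S$ has order $r$ we must have $|\V_S| = n - r$. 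I would then set $\mathcal{B} \equiv \mathcal{F}_{\V_S}$, the node-removal subspace of Eq.~\eqref{eq:node-removal space}, which vanishes on the $n-r$ coordinates indexed by $\V_S$ and therefore satisfies $\dim \mathcal{F}_{\V_S} = r$ with injective lift $p_{\mathcal{F}_{\V_S}}$. Both coordinatizations align $\R^r$ with $\V(\G\sslash\E_{cs})$, so they are exactly the liftings appearing in the Proposition.

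With these choices the two hypotheses of Theorem \ref{interlacing graph reduction theorem} become
\begin{equation*}
R\left(L\left(\G\right), p_{\mathcal{F}_\pi}\left(\tilde{x}\right)\right) \leq R\left(L\left(\G\sslash\E_{cs}\right), \tilde{x}\right)
\end{equation*}
and
\begin{equation*}
R\left(L\left(\G\right), p_{\mathcal{F}_{\V_S}}\left(\tilde{x}\right)\right) \geq R\left(L\left(\G\sslash\E_{cs}\right), \tilde{x}\right),
\end{equation*}
for all $\tilde{x} \in \R^r \backslash \{0\}$, which are precisely the two inequalities established in Proposition \ref{Rayleigh quotient edge-matching and node-removal equivalent contraction  inequalities} under the joint edge-matching and node-removal equivalent hypothesis. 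Theorem \ref{interlacing graph reduction theorem} then gives $\G\sslash\E_{cs} \propto_L \G$.

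The substantive work has in fact already been carried out in Proposition \ref{Rayleigh quotient edge-matching and node-removal equivalent contraction  inequalities}, where edge-matching collapses the Laplacian numerator over $\E(\G)$ onto the numerator over $\E(\G\sslash\E_{cs})$ and node-removal equivalence preserves the denominator norm under $p_{\mathcal{F}_{\V_S}}$. Consequently I expect no genuine obstacle at this assembling step; the one point warranting care is the dimension bookkeeping, namely confirming $\dim \mathcal{F}_{\V_S} = n - |\V_S| = r$, which hinges on $|\V_S| = n-r$ being forced by the order of the node-removal equivalent graph, together with $\dim \mathcal{F}_\pi = r$ from $\pi$ having $r$ cells.
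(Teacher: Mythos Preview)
Your proposal is correct and follows essentially the same approach as the paper: choose $\mathcal{A}=\mathcal{F}_{\pi}$ and $\mathcal{B}=\mathcal{F}_{\V_{S}}$, invoke Proposition~\ref{Rayleigh quotient edge-matching and node-removal equivalent contraction  inequalities} for the two Rayleigh-quotient inequalities, and conclude via Theorem~\ref{interlacing graph reduction theorem}. If anything, you are slightly more careful than the paper in verifying the dimension bookkeeping ($\dim\mathcal{F}_{\pi}=r$ and $|\V_{S}|=n-r$), which the paper leaves implicit.
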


\begin{proof}The contraction $\G\sslash\E_{cs}$ is edge-matching
and node-removal equivalent such that from Proposition \ref{Rayleigh quotient edge-matching and node-removal equivalent contraction  inequalities}
we have $R\left(L\left(\G\right),p_{\mathcal{F}_{\pi}}\left(x\right)\right)\leq R\left(L\left(\G\sslash\E_{cs}\right),x\right)$
and $R\left(L\left(\G\right),p_{\mathcal{F}_{\V_{S}}}\left(x\right)\right)\geq R\left(L\left(\G\sslash\E_{cs}\right),x\right)$.
Therefore, from Theorem \ref{interlacing graph reduction theorem}
with $\mathcal{A}\equiv\mathcal{F}_{\pi}$ and $\mathcal{B}\equiv\mathcal{F}_{\V}$
we then get that $\G\sslash\E_{cs}\propto_{L}\G$.\end{proof}

\begin{theorem}[normalized-Laplacian interlacing cycle-invariant contraction]\label{normalized-Laplacian interlacing cycle-invariant contraction}
Consider a graph $\G$ and an edge contraction set $\E_{cs}\in\Xi_{n-r}\left(\G\right)$
for $r<n$. Then if $\G\sslash\E_{cs}$ is cycle-invariant (Definition
\ref{node-removal equivalent contraction}), $\G\sslash\E_{cs}\propto_{\mathcal{L}}\G$.\end{theorem}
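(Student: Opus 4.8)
The plan is to reduce the general multi-edge statement to the single-edge case that is already essentially contained in Proposition~\ref{Rayleigh quotient cycle invariant contraction inequalities}, and then to climb back up to $\E_{cs}$ one edge at a time using transitivity of interlacing (Proposition~\ref{interlacing sequence}). A direct application of Theorem~\ref{interlacing graph reduction theorem} to $\G\sslash\E_{cs}$ does not work immediately, because Proposition~\ref{Rayleigh quotient cycle invariant contraction inequalities} supplies the upper Rayleigh bound $R(\mathcal{L}(\G),p_{\mathcal{F}_{\pi}}(\tilde{x}))\le R(\mathcal{L}(\G\sslash\E_{cs}),\tilde{x})$ for an arbitrary cycle-invariant contraction, whereas the matching lower bound through the anti-partition subspace $\tilde{\mathcal{F}}_{\pi}$ is established only when $\E_{cs}$ is a single edge. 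Furnishing the subspace $\mathcal{B}$ required by Theorem~\ref{interlacing graph reduction theorem} in the multi-edge setting is therefore the crux.

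First I would dispatch the single-edge case. If $\G\sslash\varepsilon_{cs}$ is a cycle-invariant atom contraction, then Proposition~\ref{Rayleigh quotient cycle invariant contraction inequalities} gives both $R(\mathcal{L}(\G),p_{\mathcal{F}_{\pi}}(\tilde{x}))\le R(\mathcal{L}(\G\sslash\varepsilon_{cs}),\tilde{x})$ and $R(\mathcal{L}(\G),p_{\tilde{\mathcal{F}}_{\pi}}(\tilde{x}))\ge R(\mathcal{L}(\G\sslash\varepsilon_{cs}),\tilde{x})$. Invoking Theorem~\ref{interlacing graph reduction theorem} with $\mathcal{A}\equiv\mathcal{F}_{\pi}$ and $\mathcal{B}\equiv\tilde{\mathcal{F}}_{\pi}$ then yields $\G\sslash\varepsilon_{cs}\propto_{\mathcal{L}}\G$ for every cycle-invariant single-edge contraction.

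Next I would decompose $\G\sslash\E_{cs}$ into a sequence of such single-edge contractions. By Proposition~\ref{find cycle invariant contraction}, cycle-invariance forces $\E_{cs}\subseteq\E(\T)$ for a spanning tree $\T$ with all corresponding rows of $\TM$ equal to zero, i.e. no edge of $\E_{cs}$ lies on any cycle of $\G$; equivalently, by Corollary~\ref{cycle-invariant degree contraction}, each cell $\G[C_{\tilde{v}}(\pi)]$ of $\pi=\pi_c(\G,\E_{cs})$ is a tree. I would then build an atom-contraction sequence (Corollary~\ref{atom-contraction sequence}) for $\pi$ in which each coarsening step peels a leaf off one of these cell-trees, so that each step contracts a single edge of $\E_{cs}$, writing the resulting contraction sequence as $\G=\G_{0},\G_{1},\ldots,\G_{n-r}=\G\sslash\E_{cs}$ with $\G_{i}=\G_{i-1}\sslash\varepsilon_{i}$.

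The main obstacle is verifying that each intermediate one-edge step is itself cycle-invariant, so that the single-edge result applies at every stage. This should follow because every $\varepsilon_{i}$ is an edge of $\G$ lying on no cycle, and contracting such a (bridge) edge neither creates nor destroys cycles; since the contraction is edge-matching (Proposition~\ref{edge-matching cycle invariant}), the remaining edges of $\E_{cs}$ correspond bijectively to edges of $\G_{i}$ that still lie on no cycle, so each $\G_{i}\sslash\varepsilon_{i+1}$ remains cycle-invariant. Applying the single-edge result gives $\G_{i}\propto_{\mathcal{L}}\G_{i-1}$ for $i=1,\ldots,n-r$, and a repeated application of transitivity (Proposition~\ref{interlacing sequence}) collapses the chain to $\G\sslash\E_{cs}=\G_{n-r}\propto_{\mathcal{L}}\G_{0}=\G$, as required.
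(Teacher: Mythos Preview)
Your proposal is correct and follows essentially the same route as the paper: reduce to a single cycle-invariant edge contraction, apply Proposition~\ref{Rayleigh quotient cycle invariant contraction inequalities} together with Theorem~\ref{interlacing graph reduction theorem} (with $\mathcal{A}=\mathcal{F}_{\pi}$, $\mathcal{B}=\tilde{\mathcal{F}}_{\pi}$) to obtain $\G\sslash\varepsilon_{cs}\propto_{\mathcal{L}}\G$, and then chain the atom contractions via Corollary~\ref{atom-contraction sequence} and transitivity (Proposition~\ref{interlacing sequence}). You are in fact more careful than the paper on one point: you explicitly argue that each intermediate single-edge step remains cycle-invariant (since every edge of $\E_{cs}$ is a bridge and edge-matching preserves this under contraction), whereas the paper tacitly assumes this when invoking the atom-contraction sequence.
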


\begin{proof}\sloppy The graph contraction can be performed by a sequence
of atom-contractions (Corollary \ref{atom-contraction sequence}),
therefore, it is sufficient to show that the interlacing property
holds for a single edge-contraction, i.e., $\G\sslash\epsilon_{cs}\propto_{\mathcal{L}}\G$
where $\epsilon_{cs}$ is a single contracted edge. The interlacing
of the sequence will then follow from Proposition \ref{interlacing sequence}.
Let $\G\sslash\varepsilon_{cs}$ be a cycle-invariant edge contraction
with corresponding edge contraction partition $\pi\in\Pi_{n-1}\left(\G\right)$.
Without loss of generality we can label the vertices such that the
contracted edge is $\varepsilon_{cs}=\left\{ x_{n-1},x_{n}\right\} $,
and the anti-partition space is $\tilde{\mathcal{F}}_{\pi}\left(\tilde{x}\right)=\left\{ x\in\mathbb{R}^{n}\,|x_{n-1}=-x_{n}\right\} $
(Eq. \eqref{eq:the partition null-space lifting}). From Proposition
\ref{Rayleigh quotient cycle invariant contraction inequalities}
we have $R\left(\mathcal{L}\left(\G\right),p_{\mathcal{F}_{\pi}}\left(x\right)\right)\leq R\left(\mathcal{L}\left(\G\sslash\epsilon_{cs}\right),x\right)$
and $R\left(\mathcal{L}\left(\G\right),p_{\tilde{\mathcal{F}}_{\pi}}\left(\tilde{x}\right)\right)\geq R\left(L\left(\G\sslash\varepsilon_{cs}\right),x\right)$,
therefore, from Theorem \ref{interlacing graph reduction theorem}
with $\mathcal{A}\equiv\mathcal{F}_{\pi}$ and $\mathcal{B}\equiv\tilde{\mathcal{F}}_{\pi}$
we then get that $\G\sslash\epsilon_{cs}\propto_{\mathcal{L}}\G$.
By performing the contraction sequence (Proposition \ref{interlacing sequence})
we get $\G\sslash\E_{cs}\propto_{\mathcal{L}}\G$.\end{proof}

\begin{coro}Consider a tree $\T=\left(\V,\E\right)$ of order $n$,
and its contraction $\T\sslash\E_{cs}$ for any $\E_{cs}\in\Xi_{n-r}\left(\T\right)$.
Then $\T\sslash\E_{cs}\propto_{\mathcal{L}}\T$.\end{coro}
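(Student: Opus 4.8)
The plan is to recognize this corollary as an immediate specialization of the cycle-invariant interlacing theorem (Theorem \ref{normalized-Laplacian interlacing cycle-invariant contraction}), so that the entire task reduces to verifying a single structural fact: that the contraction of a tree along any subset of its own edges is cycle-invariant in the sense of Definition \ref{cycle-invariat contraction}. Once that is established, there is nothing left to compute.

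First I would observe that since $\T$ is a tree it contains no simple cycles whatsoever. Next I would argue that the contracted graph $\T\sslash\E_{cs}$ is again a tree, and therefore also has an empty set of simple cycles. Connectedness follows from Lemma \ref{If--is connected}, and the fact that it is acyclic of the correct order follows from Proposition \ref{tree2tree edge-based contraction} applied with $\T$ regarded as its own spanning tree (its co-tree being empty): this guarantees $\T\sslash\E_{cs}\in\mathbb{T}(\T\sslash\E_{cs})$, i.e., a tree of order $r$ with $r-1$ edges. This is precisely the observation already invoked within the proof of Corollary \ref{tree is edge-matching}, so it can be cited rather than re-derived.

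With both the full-order graph $\T$ and the contracted graph $\T\sslash\E_{cs}$ having empty cycle sets, the one-to-one correspondence between their simple cycles demanded by Definition \ref{cycle-invariat contraction} holds vacuously, so $\T\sslash\E_{cs}$ is cycle-invariant. Applying Theorem \ref{normalized-Laplacian interlacing cycle-invariant contraction} with $\G\equiv\T$ and the given $\E_{cs}\in\Xi_{n-r}(\T)$ then yields $\T\sslash\E_{cs}\propto_{\mathcal{L}}\T$ directly.

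I do not anticipate a genuine obstacle, as the result is a corollary of the preceding theorem. The only point requiring a moment of care is confirming that edge contraction can never manufacture a new cycle, i.e., that contracting edges of a tree again produces a tree; this is exactly what Proposition \ref{tree2tree edge-based contraction} supplies, ensuring the contracted object remains acyclic of order $r$ and hence that the cycle-invariance hypothesis of Theorem \ref{normalized-Laplacian interlacing cycle-invariant contraction} is satisfied.
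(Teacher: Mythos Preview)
Your proposal is correct and follows essentially the same route as the paper: both argue that any edge-contraction of a tree is cycle-invariant and then invoke Theorem \ref{normalized-Laplacian interlacing cycle-invariant contraction}. The paper simply asserts cycle-invariance (relying on the observation already made in the proof of Corollary \ref{tree is edge-matching}), whereas you spell out the vacuous-bijection reasoning and cite Proposition \ref{tree2tree edge-based contraction} explicitly; the substance is identical.
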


\begin{proof}The contraction $\T\sslash\E_{cs}$ is cycle-invariant
for any $\E_{cs}\in\Xi_{n-r}\left(\T\right)$ , therefore, from Theorem
\ref{normalized-Laplacian interlacing cycle-invariant contraction}
we obtain that $\T\sslash\E_{cs}\propto_{\mathcal{L}}\T$.\end{proof}

Theorem \ref{node removal contraction laplacian interlacing} and
Theorem \ref{normalized-Laplacian interlacing cycle-invariant contraction}
allow us to try and solve the interlacing graph contraction problem
(Problem \ref{interlacing graph contraction}) for normalized Laplacian
and Laplacian interlacing by finding a cycle-invariant contraction
(Problem \ref{cycle-invariant contraction problem}) or a node-removal
equivalent and edge matching contraction (Problem \ref{node-removal equivalent contraction problem})
respectively.

\begin{prob}[cycle-invariant contraction]\label{cycle-invariant contraction problem}
For a graph $\G$ and a given reduction order $r<n$, find $\E_{cs}\in\Xi_{n-r}\left(\G\right)$
such that $\G\sslash\E_{cs}$ is cycle-invariant (Definition \ref{cycle-invariat contraction}).\end{prob}

\begin{prob}[node-removal equivalent contraction]\label{node-removal equivalent contraction problem}
For a graph $\G$ and a given reduction order $r<n$, find $\E_{cs}\in\Xi_{n-r}\left(\G\right)$
such that $\G\sslash\E_{cs}$ is node-removal equivalent (Definition
\ref{node-removal equivalent contraction}) and edge-matching (Definition
\ref{edge-matching contraction}).\end{prob}

From Proposition \ref{find cycle invariant contraction}, we can obtain
a cycle-invariant contraction, if exists, from the zero rows of the
Tucker representation. A Tucker representation $T_{\left(\T,\C\right)}$
can be calculated by finding a spanning tree $\T\in\mathbb{T}\left(\G\right)$
and then finding the path in $\T$ between the end-nodes of each edge
of $\C\left(\T\right)$ as described in Algorithm \ref{alg:Cycle-invariant-contraction-algo}.
Each path finding operation, e.g., with a depth-first search, is of
complexity $\mathcal{O}\left(n\right)$, and since $\mathcal{O}\left(\left|\E\left(\C\right)\right|\right)=\mathcal{O}\left(\left|\E\left(\G\right)\right|\right)$
the overall complexity of constructing $T_{\left(\T,\C\right)}$ is
$\mathcal{O}\left(mn\right)$, where $m=\left|\E\left(\G\right)\right|$.
Therefore, the cycle-invariant contraction algorithm (Algorithm \ref{alg:Cycle-invariant-contraction-algo})
is of complexity $\mathcal{O}\left(mn\right)$ .

From Proposition \ref{edge-matching node-removal}, we can obtain
a node-removal equivalent and edge matching contraction, if exists,
by first finding for all vertices of $\G$ the connected components
partition $\pi_{cc}\left(\G\backslash v\right)$ and then constructing
$\E_{cs}$ by choosing from all partitions $\left\{ \pi_{cc}\left(\G\backslash v\right)\right\} _{v=1}^{n}$
a subset of cells with a total number of $n-r$ unique nodes (Algorithm
\ref{node-removal equivalent contraction}). Each connected component
finding operation, e.g., with a depth-first search, is of complexity
$\mathcal{O}\left(n+m\right)$, and repeated $n$ times, the overall
complexity of the algorithm is $\mathcal{O}\left(n^{2}+nm\right)$.

The feasibility of the cycle-invariant and node-removal equivalent
problems requires further study. 
\begin{center}
\begin{algorithm}[h]
\caption{Cycle-invariant contraction algorithm\label{alg:Cycle-invariant-contraction-algo}}

\textbf{Input: }graph\textbf{ $\G$ }of order\textbf{ $n$, }required
reduction order\textbf{ $r$} 
\begin{enumerate}
\item \textbf{Find} a spanning tree $\T\in\mathbb{T}\left(\G\right)$ and
the co-tree $\C\left(\T\right)$. 
\item \textbf{Calculate} the tucker representation \textbf{$T_{\left(\T,\mathcal{C}\right)}$
}(Definition \ref{Tucker_representation}). 
\item \textbf{Choose $n-r$ }cycle-invariant edges from the zero rows of
$T_{\left(\T,\mathcal{C}\right)}$ and obtain $\E_{cs}$. 
\end{enumerate}
\textbf{Output: $\G_{r}=\G\sslash\E_{cs}$} 
\end{algorithm}
\par\end{center}

\begin{center}
\begin{algorithm}[h]
\caption{Node-removal equivalent contraction algorithm\label{alg:Node-removal-equivalent-contract}}

\textbf{Input: }graph\textbf{ $\G$ }of order\textbf{ $n$, }required
reduction order\textbf{ $r$} 
\begin{enumerate}
\item \textbf{For }$v\in\V\left(\G\right)$\textbf{: Calculate} $\pi_{cc}\left(\G\backslash v\right)$,
the connected components partition of $\G\backslash v$. 
\item \textbf{Choose }a subset of cells $\mathcal{S}\subseteq\left\{ \pi_{cc}\left(\G\backslash v\right)\right\} _{v=1}^{n}$
with a total number of $n-r$ unique nodes\textbf{.} 
\item \textbf{Construct} $\E_{cs}=\cup_{C_{v}\in\mathcal{S}}\E\left(\G\left[C_{v}\cup v\right]\right)$. 
\end{enumerate}
\textbf{Output: $\G_{r}=\G\sslash\E_{cs}$} 
\end{algorithm}
\par\end{center}

\section{Case Studies\label{sec:Case-Studies}}

As a small-scale normalized Laplacian interlacing example, we consider
a graph of order $6$ presented in Figure \ref{fig:Small-scale-normalized_Laplacian},
and we require the reduced graph to be of order $r=4$. A cycle-invariant
graph contraction is then performed with two edges (Figure \ref{fig:Small-scale-normalized_Laplacian}).
The resulting reduced graph (Figure \ref{fig:Small-scale-normalized_Laplacian})
has normalized-Laplacian spectra $\left\{ \lambda_{k}\left(\mathcal{L}\left(\G_{r}\right)\right)\right\} _{k=1}^{r}$
given in Figure \ref{fig:Reduced-order normalized-Laplacian spectra}
with the upper and lower interlacing bounds $\lambda_{k}\left(\mathcal{L}\left(\G\right)\right)$
and $\lambda_{n-r+k}\left(\mathcal{L}\left(\G\right)\right)$. Since
$\G\sslash\E_{cs}$ is cycle-invariant, then as according to Theorem
\ref{normalized-Laplacian interlacing cycle-invariant contraction},
we get $\G\sslash\E_{cs}\propto_{\mathcal{L}}\G$ and the reduced-order
spectra is within the interlacing bounds (Figure \ref{fig:Reduced-order normalized-Laplacian spectra}). 
\begin{center}
\begin{figure}
\centering{}\includegraphics[width=4in]{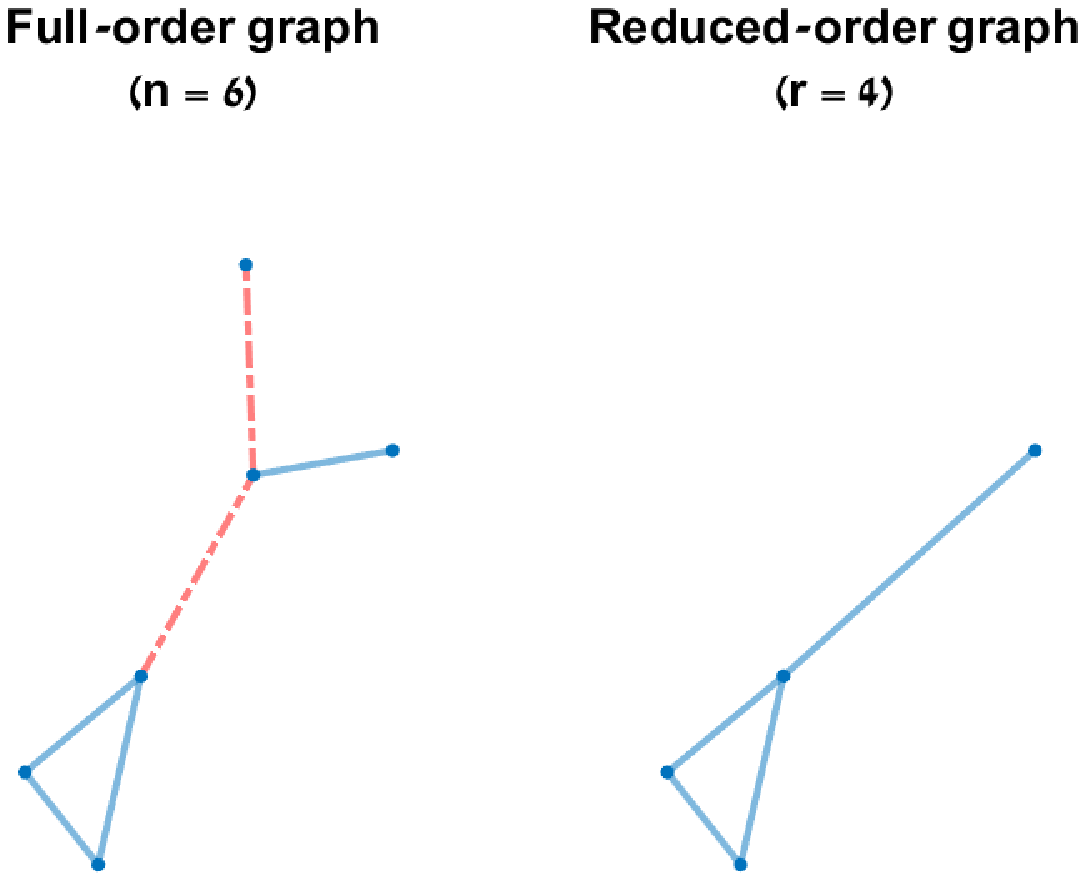}\caption{Small scale normalized-Laplacian interlacing graph contraction (contracted
edges dashed-red).\label{fig:Small-scale-normalized_Laplacian}}
\end{figure}
\par\end{center}

\begin{center}
\begin{figure}
\centering{}\includegraphics[width=4in]{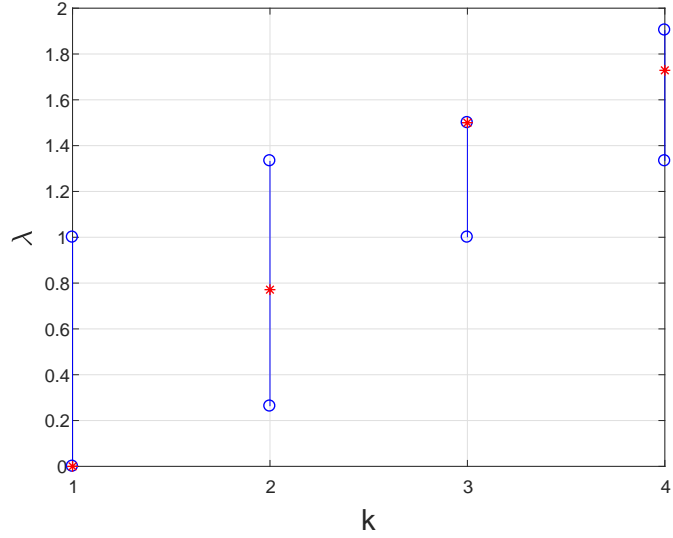}\caption{Reduced-order normalized-Laplacian spectra (stared-red) and interlacing
bounds (circled-blue).\label{fig:Reduced-order normalized-Laplacian spectra}}
\end{figure}
\par\end{center}

As a small-scale Laplacian interlacing example, we consider a graph
of order $6$ presented in Figure \ref{fig:Small-scale-Laplacian}
and require the reduction to be of order $r=4$. For this case the
only node-removal equivalent and edge-matching contraction is with
the three edges shown in Figure \ref{fig:Small-scale-Laplacian}.
The resulting reduced graph (Figure \ref{fig:Small-scale-Laplacian})
has Laplacian spectra given in Figure \ref{fig:Laplacian-interlacing-1-1}
with the interlacing bounds $\lambda_{k}\left(L\left(\G\right)\right)$
and $\lambda_{n-r+k}\left(L\left(\G\right)\right)$. Since $\G\sslash\E_{cs}$
is node-removal equivalent and edge-matching, then as according to
Theorem \ref{node removal contraction laplacian interlacing} we get
$\G\sslash\E_{cs}\propto_{L}\G$ and the reduced-order Laplacian spectra
is within the interlacing bounds (Figure \ref{fig:Laplacian-interlacing-1-1}).
Notice that for this case there is no cycle-invariant contraction,
and for the same choice of $\E_{cs}$ (Figure \ref{fig:Small-scale-Laplacian})
the reduced-order normalized-Laplacian does not interlace with the
full-order normalized-Laplacian as $\lambda_{4}\left(\mathcal{L}\left(\G_{r}\right)\right)>\lambda_{6}\left(\mathcal{L}\left(\G\right)\right)$
(Figure \ref{fig:Reduced-order-normalized-Laplacian spectra_n1}). 
\begin{center}
\begin{figure}
\centering{}\includegraphics[width=4in]{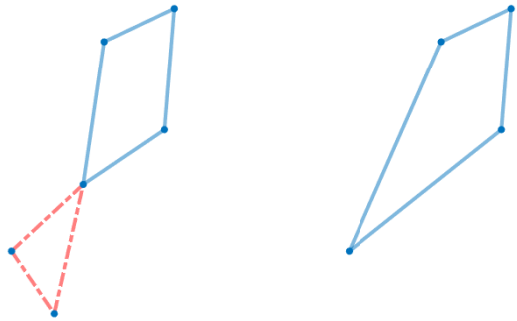}\caption{Small scale Laplacian interlacing graph contraction (contracted edges
dashed-red).\label{fig:Small-scale-Laplacian}}
\end{figure}
\par\end{center}

\begin{center}
\begin{figure}
\centering{}\includegraphics[width=4in]{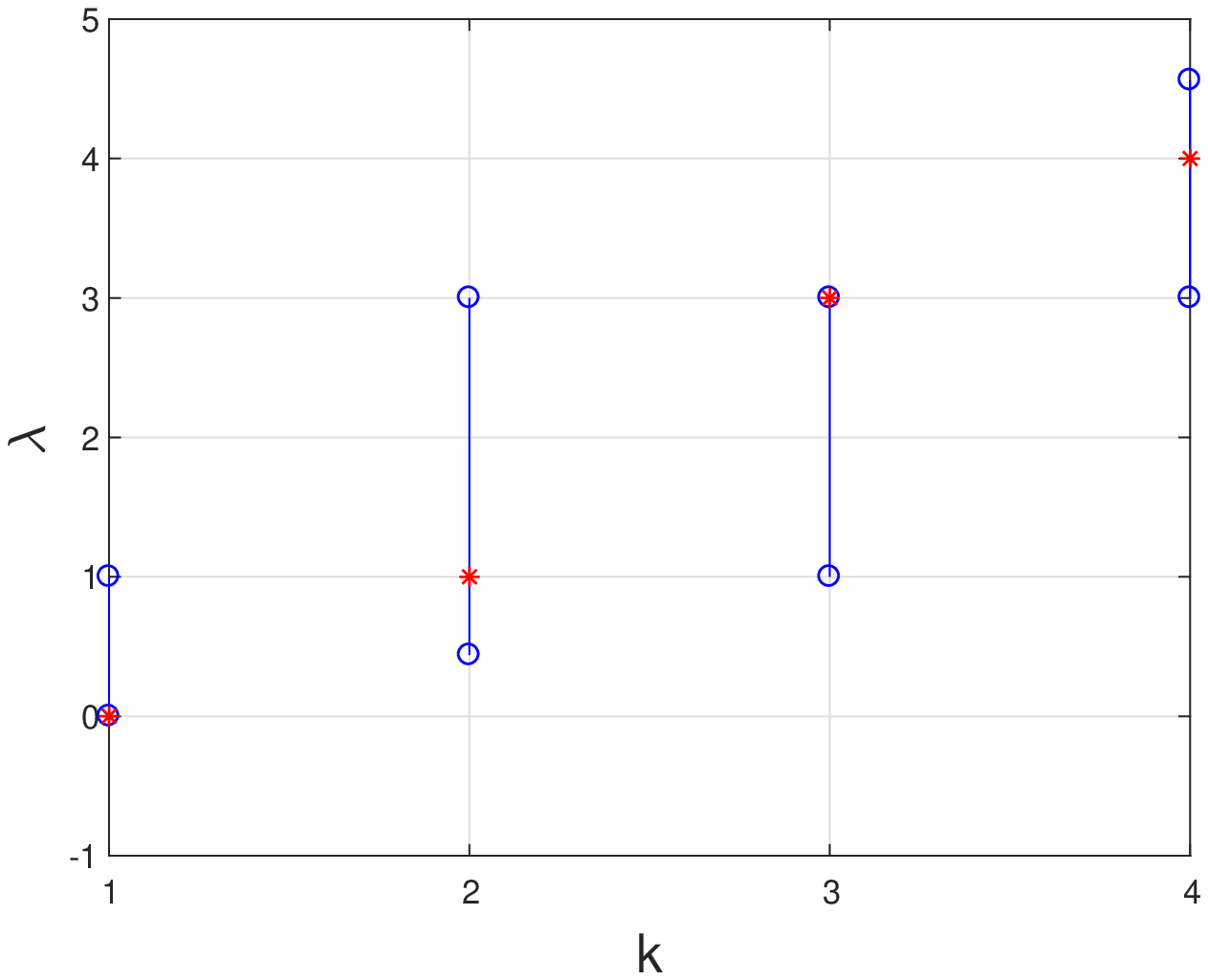}\caption{Reduced-order Laplacian spectra (stared-red) and interlacing bounds
(circled-blue).\label{fig:Laplacian-interlacing-1-1}}
\end{figure}
\par\end{center}

\begin{center}
\begin{figure}
\centering{}\includegraphics[width=4in]{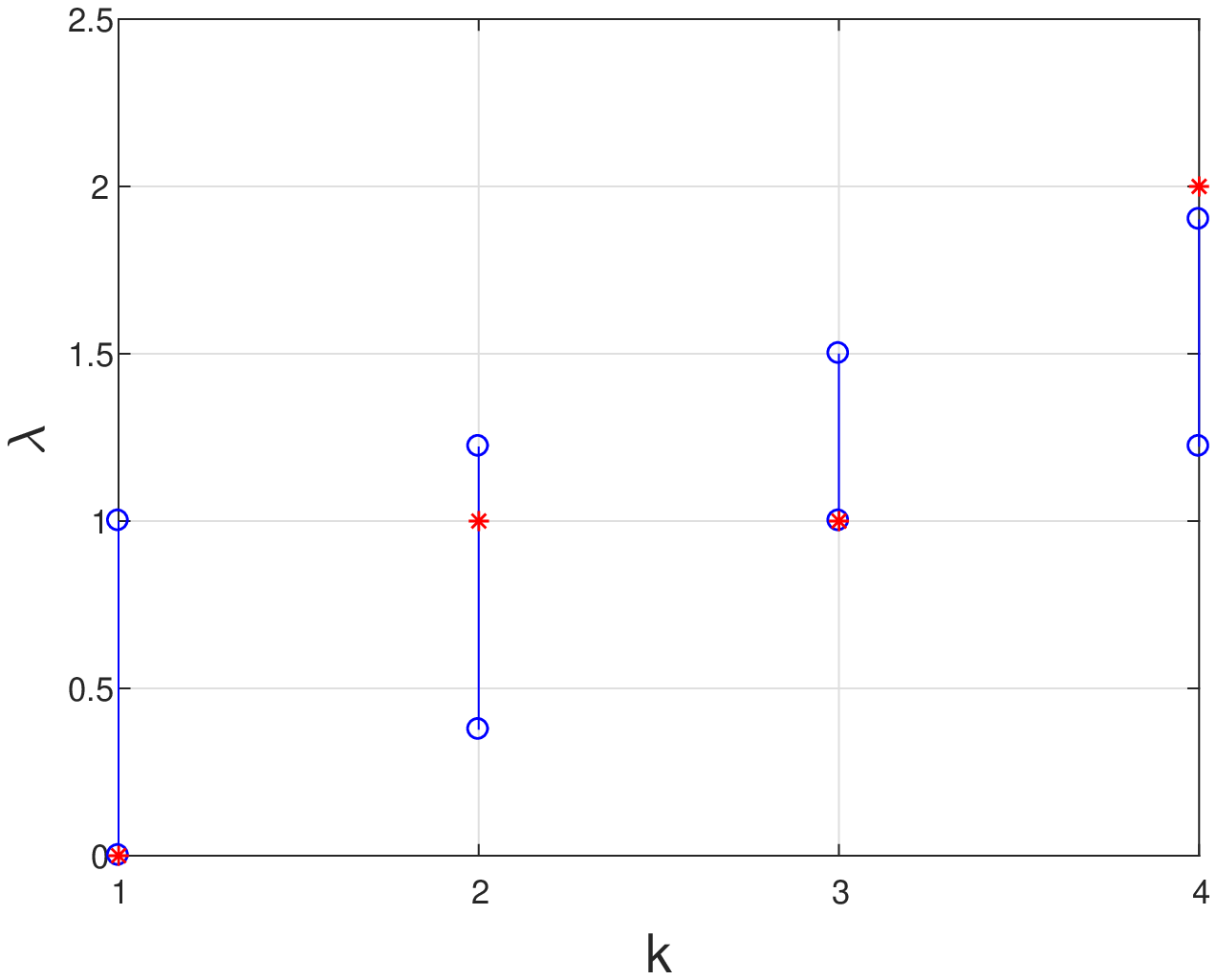}\caption{Reduced-order normalized-Laplacian spectra (stared-red) and interlacing
bounds (circled-blue).\label{fig:Reduced-order-normalized-Laplacian spectra_n1}}
\end{figure}
\par\end{center}

As a larger and more complicated example, a random tree of order $50$
is created and $10$ cycle-completing edges are randomly added to
it resulting in a graph of order $50$ with $59$ edges (Figure \ref{fig:Large-scale-normalized}).
The required reduction order is $r=30$. Using the cycle-invariant
contraction algorithm (Algorithm \ref{alg:Cycle-invariant-contraction-algo})
an edge-contraction set $\E_{cs}$ with $n-r=20$ edges is chosen
from the edges of $\G$ (Figure \ref{fig:Large-scale-normalized}),
and the graph contraction is performed. As according to Theorem \ref{normalized-Laplacian interlacing cycle-invariant contraction},
the resulting reduced-order graph $\G_{r}=\G\sslash\E_{cs}$ is normalized-Laplacian
interlacing with $\G$ and the reduced spectra is within the interlacing
bounds (Figure \ref{fig:Normalized-Laplacian-interlacing}).

Using the node-removal equivalent contraction algorithm (Algorithm
\ref{alg:Node-removal-equivalent-contract}) a different edge-contraction
set $\E_{cs}$ with $n-r=20$ edges is chosen from the edges of $\G$
(Figure \ref{fig:Large-scale-Laplacian}), and the graph contraction
is performed. As according to Theorem \ref{node removal contraction laplacian interlacing},
the resulting reduced order graph $\G_{r}=\G\sslash\E_{cs}$ (Figure
\ref{fig:Large-scale-Laplacian}) is Laplacian interlacing with $\G$
and the reduced spectra is within the interlacing bounds (Figure \ref{fig:Laplacian-interlacing-1}). 
\begin{center}
\begin{figure}
\centering{}\includegraphics[width=4in]{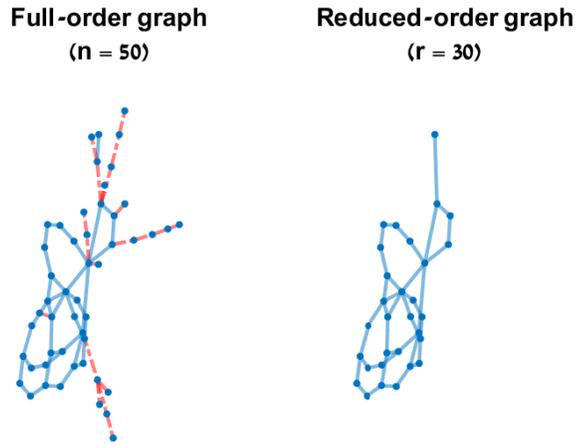}\caption{Large scale normalized-Laplacian interlacing graph contraction (contracted
edges dashed-red).\label{fig:Large-scale-normalized}}
\end{figure}
\par\end{center}

\begin{center}
\begin{figure}
\centering{}\includegraphics[width=4in]{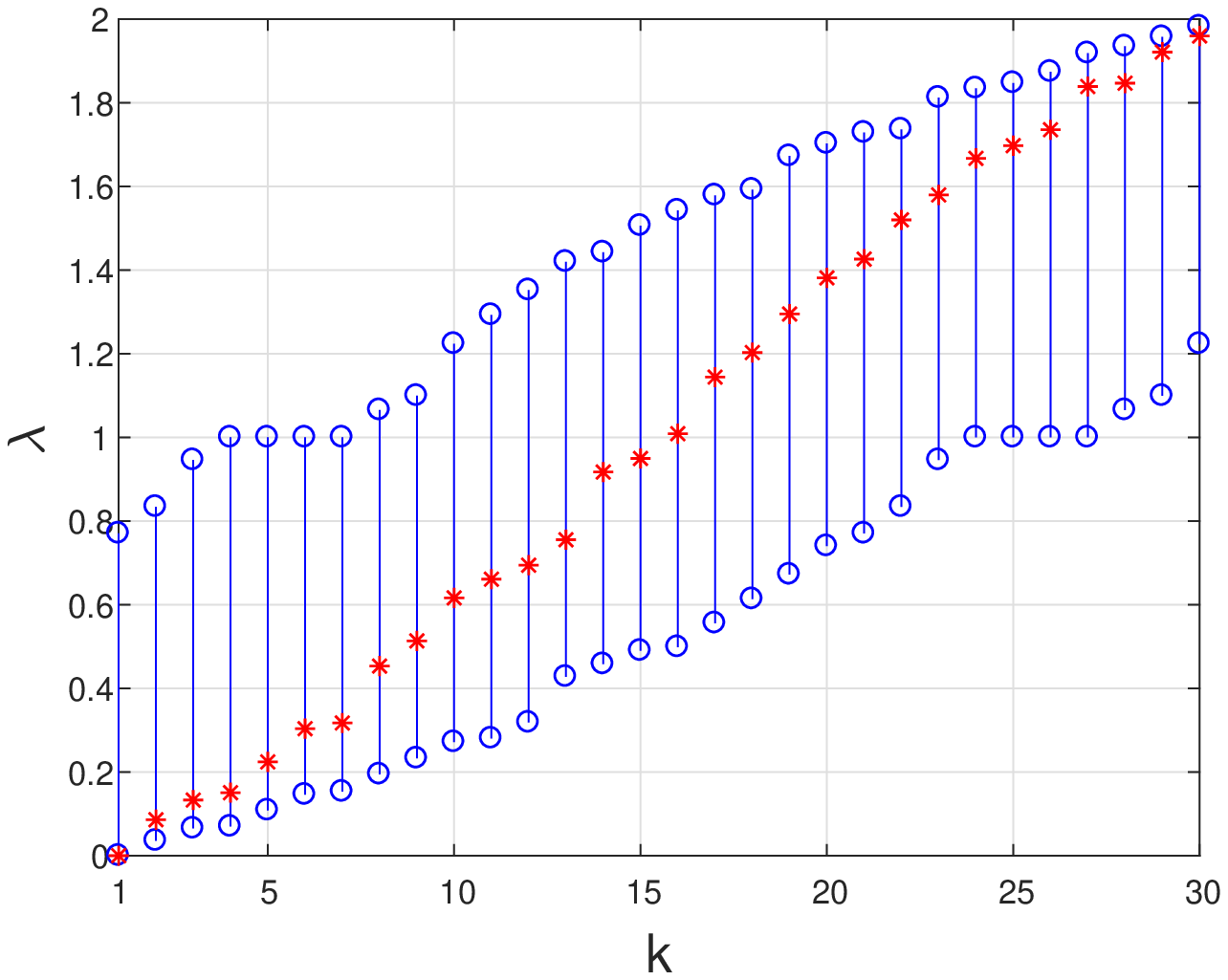}\caption{Reduced-order normalized-Laplacian spectra (stared-red) and interlacing
bounds (circled-blue).\label{fig:Normalized-Laplacian-interlacing}}
\end{figure}
\par\end{center}

\begin{center}
\begin{figure}
\centering{}\includegraphics[width=4in]{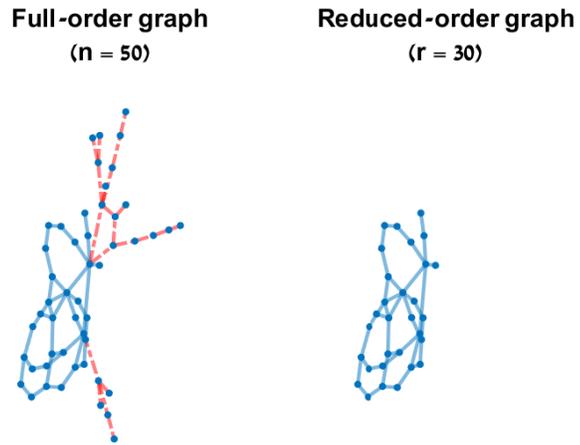}\caption{Large scale Laplacian interlacing graph contraction.\label{fig:Large-scale-Laplacian}}
\end{figure}
\par\end{center}

\begin{center}
\begin{figure}
\centering{}\includegraphics[width=4in]{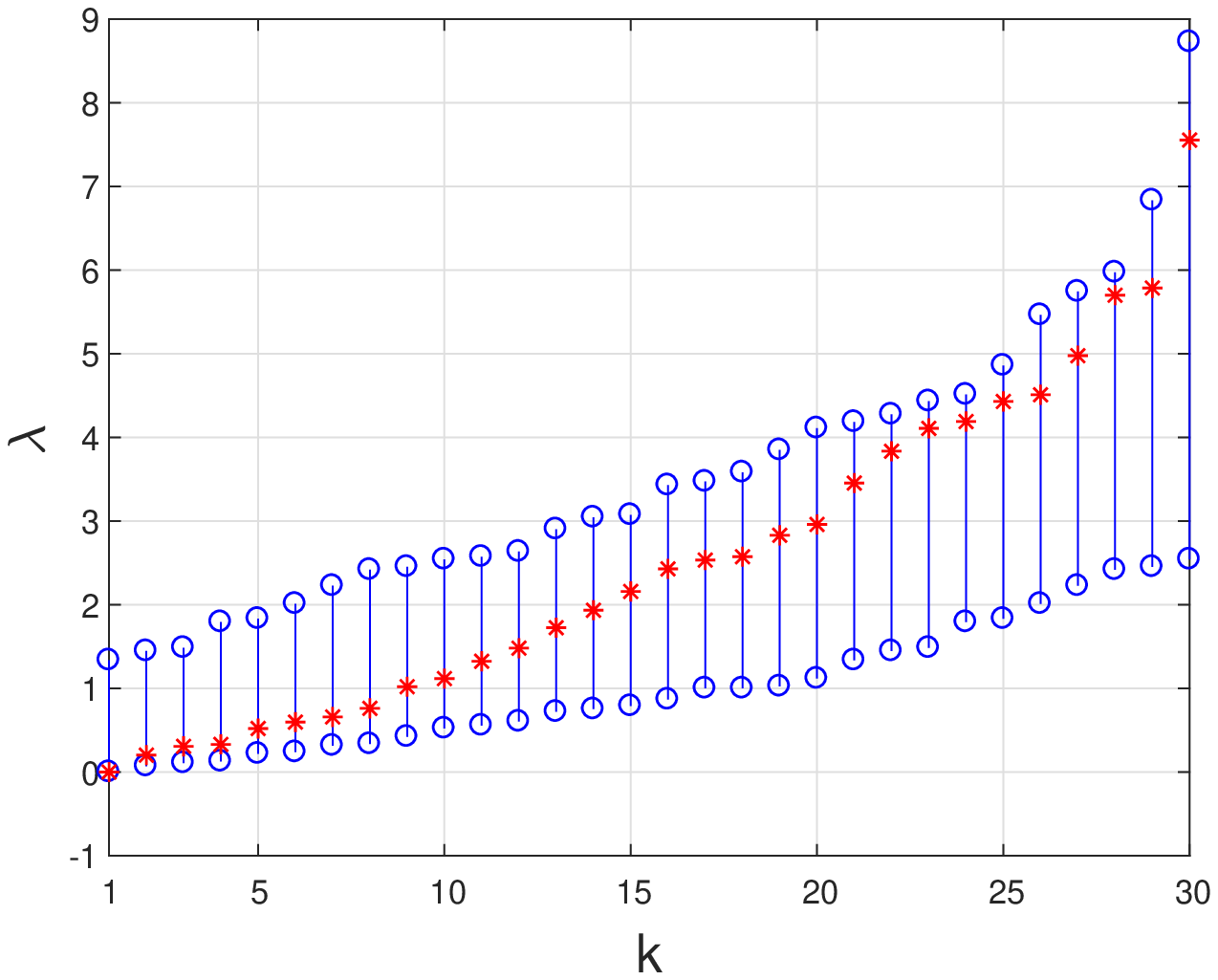}\caption{Reduced-order Laplacian spectra (stared-red) and interlacing bounds
(circled-blue).\label{fig:Laplacian-interlacing-1}}
\end{figure}
\par\end{center}

%\newpage
\bibliographystyle{elsarticle-num}
\bibliography{laa_refs}

\end{document}